\renewcommand{\a}{\mathbf{a}}
\newcommand{\B}{\mathcal{B}}
\newcommand{\D}{\mathcal{D}}
\newcommand{\Ha}{\mathcal{H}}
\newcommand{\Sa}{\mathcal{S}}
\newcommand{\J}{\mathcal{J}}
\newcommand{\sL}{{\scriptscriptstyle L}}
\newcommand{\LL}{\mathcal{L}}
\newcommand{\Q}{\mathbb{Q}}
\newcommand{\sR}{{\scriptscriptstyle R}}
\newcommand{\sLR}{{\scriptscriptstyle LR}}
\newcommand{\Z}{\mathbb{Z}}
\newcommand{\id}{\mathrm{id}}
\newcommand{\LC}{\mathop{\mathrm{LC}}\nolimits}
\newcommand{\RC}{\mathop{\mathrm{RC}}\nolimits}
\newcommand{\Irr}{\mathop{\mathrm{Irr}}}
\renewcommand{\u}{\underline}
\newcommand{\thu}[1]{\theta_{\smash{\u{#1}}}}
\newcommand{\bas}[2]{\thu{#1}^{\phantom{\vee}} \thu{#2}^\vee}
\newcommand{\cbas}[2]{c_{\u{#2}}^{-1} \thu{#1}^{\phantom{\vee}} \thu{#2}^\vee}
\theoremstyle{changebreak}
\newtheorem{Theo}[subsection]{Theorem}
\newtheorem{Lemma}[subsection]{Lemma}
\newtheorem{Prop}[subsection]{Proposition}
\newtheorem{Cor}[subsection]{Corollary}
\newtheorem{Conj}[subsection]{Conjecture}
\theoremstyle{definition}
\newtheorem{Def}[subsection]{Definition}
\newtheorem{DefProp}[subsection]{Definition/Proposition}
\theoremstyle{remark}
\newtheorem{remark}[subsection]{Remark}
\numberwithin{equation}{section}
\begin{document}

\title{A
new construction of the asymptotic algebra associated to the $q$-Schur
algebra}

\author{Olivier Brunat}
\address{Ruhr-Universit\"at Bochum,
Fakult\"at f\"ur Mathematik,
Raum NA 2/33,
D-44780 Bochum}
\email{Olivier.Brunat@rub.de}

\author{Max Neunh\"offer}

\address{School of Mathematics and Statistics,
Mathematical Institute,
North Haugh,
St Andrews, Fife KY16 9SS,
Scotland, UK}

\email{neunhoef@mcs.st-and.ac.uk}

\subjclass[2000]{Primary 20C08, 20F55; Secondary 20G05}

\date{3 October 2008}



\begin{abstract}
We denote by $A$ the ring of Laurent polynomials in the indeterminate
$v$ and by $K$ its field of fractions. In this paper, we are interested
in representation theory of the ``generic'' $q$-Schur algebra
$\Sa_q(n,r)$ over $A$. We will associate to every non-degenerate
symmetrising trace form $\tau$ on $K\Sa_q(n,r)$ a subalgebra $\J_{\tau}$
of $K\Sa_q(n,r)$ which is isomorphic to the ``asymptotic'' algebra
$\J(n,r)_A$ defined by J. Du. As a consequence, we give a new
criterion for James' conjecture.
\end{abstract}

\maketitle

\section{Introduction}

This article is concerned with the representation theory of the
``generic'' $q$-Schur algebra $\Sa_{q}(n,r)$ over $A=\Z[v,v^{-1}]$.
The $q$-Schur algebra was introduced by Dipper and James in~\cite{DJ}
and \cite{qtensor}. There is an interest in studying the representations
of this algebra, because they relate informations about the
modular representation theory of the finite general linear group
$\operatorname{GL}_n(q)$ and of the quantum groups.

Using a new basis of $\Sa_{q}(n,r)$ constructed in~\cite{DuKLBasis}
(which is analogous to the Kazhdan-Lusztig basis in Iwahori-Hecke
algebras), J. Du introduced in~\cite{DuAsymp} the asymptotic
algebra $\J(n,r)_A$ over $A$ and defined a homomorphism,
$\Phi:\Sa_q(n,r)\rightarrow\J(n,r)_A$, the so-called Du-Lusztig
homomorphism because its construction is similar to the Lusztig
homomorphism for Iwahori-Hecke algebras.

There is a relevant open question in the representation theory of the
$q$-Schur algebra, the so-called James' conjecture. A precise
formulation of this conjecture is recalled in Section~\ref{partjames}.
In~\cite{geckqschurjames} Meinolf Geck obtained a new formulation of
this conjecture. More precisely, for $k$ any field of characteristic
$\ell$ and for $R$ any integral domain with quotient field $k$,
if $q\in R$ is invertible, we can define the corresponding $q$-Schur algebra
$\Sa_q(n,r)_R$ over $R$ and its extension of
scalars $\Sa_q(n,r)_k$. Similarly, we can define $ \J(n,r)_k$. 

In \cite[1.2]{geckqschurjames} M.~Geck
has shown that James' conjecture holds if and only if, for $\ell > r$, 
the rank of the homomorphism
$\Phi_k:\Sa_q(n,r)_k\rightarrow\J(n,r)_k$ only depends on the
multiplicative order of $q$ in $k^{\times}$, but not on $\ell$.

Thus, in order to prove James' conjecture, it is relevant to understand
the rank of the Du-Lusztig homomorphism. The motivation of this paper
is to develop new methods allowing to study this rank. More precisely,
we will give a new construction of the asymptotic algebra. Indeed,
thanks to methods developed in~\cite{klwedder} by the second author
and adapted to our situation, we prove that $\J(n,r)_A$ is isomorphic
to an algebra $\J_{\tau}$, which only depends on the choice of a
non-degenerate symmetrising trace form $\tau$ on the semisimple algebra
$K\Sa_q(n,r)$ (here $K=\Q(v)$) such that 
\[ \Sa_q(n,r)\subseteq \J_{\tau}\subseteq K\Sa_q(n,r). \]
Our main tool is to use the structure of the left cell modules of
$\Sa_q(n,r)$ to construct an explicit Wedderburn basis of $K\Sa_q(n,r)$
(see Theorem~\ref{wedderburn}). The main result of this paper is
Theorem~\ref{newinter}.

The article is organized as follows. In Section~\ref{sec:qschur}, we
recall the definition of the ``generic'' $q$-Schur algebra and of
its analogue of the Kazhdan-Lusztig basis for Iwahori-Hecke algebras. In
Section~\ref{sec:conj} we prove that the $q$-Schur algebra satisfies
properties which are very similar to Lusztigs conjectures {\bf
P1},\ldots,\,{\bf P15} for Iwahori-Hecke algebras. In Section~\ref{irr} we
develop some tools to prove our main result in Section~\ref{sec:asym}.
Finally, in Section~\ref{partjames} we state a new criterion for
James' conjecture.


\section{The Iwahori-Hecke algebra of type A and the $q$-Schur algebra}
\label{sec:qschur}

Let $v$ be an indeterminate. We set $A=\Z[v,v^{-1}]$ to be the ring 
of Laurent polynomials in $v$ and $K := \Q(v)$ its field of fractions.
In order to introduce the $q$-Schur algebra over $A$, we have to
recall some definitions and properties about Iwahori-Hecke algebras. We 
follow~\cite{Uneq}.

\subsection{Iwahori-Hecke algebras and the Kazhdan-Lusztig basis}
\label{Hecke}

Let $(W,S)$ be a Coxeter group (here $S$
is the set of simple reflections). We define the corresponding Iwahori-Hecke
algebra $\Ha$ as the free $A$-module with basis $\{T_w\}_{w\in
W}$ satisfying \[\begin{array}{ll} T_wT_{w'}=T_{ww'}&\textrm{if}\
l(ww')=l(w)+l(w'),\\ (T_s-v)(T_s+v^{-1})=0&\textrm{for}\ s\in S,
\end{array}\] where $l$ is the length function on $W$. In~\cite[\S1]{KL79}
Kazhdan and Lusztig define an $A$-basis $\{C_w\ |\ w\in W\}$ of $\Ha$
which satisfies \[\overline{C}_w=C_w\quad\textrm{and}\quad C_w=\sum_{y\leq
w} p_{y,w} T_y\quad\textrm{for }w\in W,\] where $\leq$ is
the Bruhat-Chevalley order on $W$, and $^-:\Ha\rightarrow\Ha$ is the
involutive automorphism of $\Ha$ defined by $\overline{v}=v^{-1}$
and \smash{$\overline{\sum\limits_{w\in W}a_wT_w}=\sum\limits_{w\in
W}\overline{a}_wT_{w^{-1}}^{-1}$} and
$p_{y,w}\in \langle v^{k}\ |\ k\leq 0\rangle_{\Z}$ and $p_{w,w}=1$.

Note that we use the more modern notation from \cite{Uneq}, that is, our
elements $T_w$ here are the same as in \cite{Uneq} and were denoted by 
$v^{-l(w)} T_w$ in \cite{KL79}, and our elements $C_w$ here were
denoted by $C'_w$ in \cite{KL79} and by $c_w$ in \cite{Uneq}.

We denote by $g_{x,y,z}$ the structure constants of $\Ha$
with respect to the basis $\{C_w\ |\ w\in W\}$, that is, we have
\[C_xC_y=\sum_{z\in W}g_{x,y,z}C_z\quad\textrm{for }x,y\in W.\] We
define a relation $y\preccurlyeq_{\sL} w$ on $W$ by: either $y=w$ or
there is an $s\in S$ such that $g_{s,w,y}\neq 0$. Let $\leq_{\sL}$
be the transitive closure of the relation $\preccurlyeq_{\sL}$
and denote by $\sim_{\sL}$ the associated equivalence relation
on $W$. The classes for this relation are the so-called left cells.
Similarly, we define $\leq_{\sR}$ and $\sim_{\sR}$, and we call the 
corresponding equivalence classes right cells. For $y,w\in W$, we
write $y\leq_{\sLR}w$ if there is a sequence $y=y_0,\,y_1\ldots,y_n=w$
of elements of $W$ such that, for $i\in\{0,\ldots,n-1\}$, we
have $y_i\leq_{\sL}y_{i+1}$ or $y_i\leq_{\sR}y_{i+1}$. The classes
of the equivalence relation $\sim_{\sLR}$ on $W$ corresponding
to $\leq_{\sLR}$ are the so-called two-sided cells.

In \cite[\S3.6]{Uneq}, Lusztig shows that for $z\in W$, there is a unique 
integer $\a(z)$ such that for every $x,y\in W$, we have
$g_{x,y,z}v^{\a(z)}\in \Z[v^{-1}]$ and $g_{x,y,z}v^{\a(z)-1}\notin \Z[v^{-1}]$.
Moreover, for $z\in W$, we define $\Delta(z)=-\operatorname{deg}p_{1,z}$.
For $x,y,z\in W$, we write $\gamma_{x,y,z^{-1}}\in\Z$ for the coefficient
of $v^{\a(z)}$ in $g_{x,y,z}$ and we set \[\D=\{d\in W\ |\
\a(d)=\Delta(d)\},\]
the set of distinguished involutions.
In the case that $W$ is a finite Weyl group, an affine Weyl group, 
or a dihedral group, Lusztig proved that the following conjectures hold 
(see~\cite[\S\S15--17]{Uneq}):


\renewcommand{\arraystretch}{1}
\begin{tabular}{lp{4.3in}}
\textbf{P1}&For any $z\in W$ we have $\a(z)\leq\Delta(z)$.\\
\textbf{P2}&Let $x,\,y\in W$; if $\gamma_{x,y,d}\neq 0$ for some
$d\in\D$, then we have $x=y^{-1}$.\\
\textbf{P3}&If $y\in W$, there exists a unique $d\in\D$ such that
$\gamma_{y^{-1},y,d}\neq 0$.\\
\textbf{P4}&If $x\leq_{\sLR}y$, then $\a(x)\geq \a(y)$. \\
\textbf{P5}& If $d\in\D$ and $y\in W$ are such that
$\gamma_{y^{-1},y,d}\neq 0$, then $\gamma_{y^{-1},y,d}=\pm 1$.\\
\textbf{P6}& For $ d\in\D$, we have $d=d^{-1}$.\\
\textbf{P7}& For every $x,\,y,\,z\in W$, we have
$\gamma_{x,y,z}=\gamma_{y,z,x}=\gamma_{z,x,y}$.\\
\textbf{P8}& Let $x,\,y,\,z\in W$ be such that $\gamma_{x,y,z}\neq 0$,
then
$x \sim_{\sL}y^{-1}$, $y\sim_{\sL} z^{-1}$ and\\& $z \sim_{\sL} x^{-1}$.\\
\textbf{P9}&If $x\leq_{\sL}y$ and $\a(x)=\a(y)$, then $x\sim_{\sL}y$.\\
\textbf{P10}&If $x\leq_{\sR}y$ and $\a(x)=\a(y)$, then $x\sim_{\sR}y$.\\
\textbf{P11}&If $x\leq_{\sLR}y$ and $\a(x)=\a(y)$, then $x\sim_{\sLR}y$.\\
\textbf{P13}& Every left cell contains a unique element $d\in\D$ and
$\gamma_{y^{-1},y, d}\neq 0$ for\\& every $y\sim_{\sL}d$.\\
\textbf{P14}& For every $x\in W$, we have $x\sim_{\sLR}x^{-1}$.\\
\textbf{P15}& Let $v'$ be a second indeterminate and let
$g_{x,y,z}'\in\Z[v',v'^{-1}]$ be obtained from $g_{x,y,z}$ by the
substitution $v\mapsto v'$. If $x,x',y,w\in W$ satisfy $\a(w) =\a(y)$,
then 
$$\sum_{y'}g_{w,x',y'}'g_{x,y',y}=\sum_{y'}g_{x,w,y'}g_{y',x',y}'.$$
\end{tabular}

\noindent
Note that in this paper we only consider the case of type A, in which $W$ is
the symmetric group on $|S|+1$ points.
\subsection{The $q$-Schur algebra $\Sa_q(n,r)$}
\label{qschuralg}

In the following, we denote by $W$ the symmetric group
of degree $r$, and by $S$ the set of transpositions $s_i=(i,i+1)$ for $1\leq i\leq
r-1$ and $\Ha$ is the associated Iwahori-Hecke algebra as in~\S\ref{Hecke}.
Let $n,r\geq 1$, we denote by $\Lambda(n,r)$ the set of compositions of $r$ into
at most $n$ parts. For $\lambda\in\Lambda(n,r)$,  we denote
by $W_{\lambda}\subseteq W$ the corresponding Young subgroup.
For $\lambda,\mu\in\Lambda(n,r)$, we set $D_{\lambda,\mu}$ to be the set of
distinguished double coset representatives of $W$ with respect
to $W_{\lambda}$ and $W_{\mu}$. 
We set
\[M(n,r)=\{(\lambda,w,\mu)\ |\ \lambda,\mu\in\Lambda(n,r),\,w\in
D_{\lambda,\mu}\}.\]
For $\u a=(\lambda,w,\mu)\in M(n,r)$, we write $ro(\u a)=\lambda$
and $co(\u a)=\mu$ and we set $\u a^t=(\mu,w^{-1},\lambda)$.
For $\lambda,\,\mu\in\Lambda(n,r)$, we set $M_{\lambda,\mu}=\{\u
a\in M(n,r)\ |\ ro(\u a)=\lambda,\,co(\u a)=\mu\}$. We
remark that if $w\in D_{\lambda,\mu}$, then the double
coset $W_{\lambda}w W_{\mu}$ has a unique longest element. To
prove this, we can proceed as follows: we denote by $w_0$ the longest
element of $W$, then $^{w_0}W_{\mu}=W_{\widetilde{\mu}}$.
Here $\widetilde{\mu}=(\mu_s,\mu_{s-1},\ldots,\mu_1)$,
where $\mu=(\mu_1,\ldots,\mu_s)$.
Moreover, $r_{w_0}:W\rightarrow W,\,x\mapsto xw_0$ induces a
bijection from the double coset $W_{\lambda}ww_0 W_{\widetilde{\mu}}$
to the double coset $W_{\lambda}w W_{\mu}$. Thanks to~\cite[11.3]{Uneq},
we deduce that $r_{w_0}$ reverses the Bruhat-order. Since the
double coset $W_{\lambda}ww_0 W_{\widetilde{\mu}}$ has a unique
element of minimal length, the result follows. 
We write $D_{\lambda,\mu}^+$
for the set of double coset representatives of maximal length.
We denote by $\ell_{\lambda,\mu}$ the bijection from $D_{\lambda,\mu}$
to $D_{\lambda,\mu}^+$ that associates to the representative of
minimal length $w$ of the double coset $W_{\lambda}w W_{\mu}$ the
representative of maximal length. We remark that if $w\in
D_{\lambda,\mu}$, then $w^{-1}\in D_{\mu,\lambda}$. Moreover, we have
\[ \ell_{\lambda,\mu}(w)^{-1}=\ell_{\mu,\lambda}(w^{-1}). \] 
In the following, we set $\sigma(\u a):=\ell_{\lambda,\mu}(w)$ for $\u a
= (\lambda,w,\mu)$.

We now recall the definition of the $q$-Schur algebra $\Sa_q(n,r)$
introduced by Dipper and James in~\cite{DJ}. We set $q=v^2$, then 
the $q$-Schur algebra $\Sa_q(n,r)$
of degree $(n,r)$ is the endomorphism algebra
\[\Sa_q(n,r)=\operatorname{End}_{\Ha}\left(\bigoplus_{\lambda\in\Lambda(
n,r)}x_{\lambda}\Ha\right),\] 
where $x_{\lambda}=\sum\limits_{w\in W_{\lambda}}v^{l(w)}T_w\in\Ha$. 
In~\cite[3.4]{DJRepHecke} Dipper and James prove 
that $\Sa_q(n,r)$ has a standard
basis $\{\phi_{\lambda,\mu}^w\ |\ (\lambda,w,\mu)\in M(n,r)\}$ indexed
by the set $M(n,r)$, which plays the same role as the
basis $\{T_w\ |\ w\in W\}$ for the Iwahori-Hecke algebra $\Ha$. Moreover,
in~\cite{DuKLBasis} Du proves that $\Sa_q(n,r)$ has another basis 
$\{\thu a\mid \u a\in M(n,r)\}$ whose construction is  analogous to the
Kazhdan-Lusztig basis of $\Ha$. We denote by $f_{\u a,\u b,\u c}\in A$
the structure constants with respect to this basis, that is, we have
\[ \thu a\thu b = \sum_{\u c\in M(n,r)} f_{\u a,\u b,\u c}\thu c \qquad
   \mbox{for all } \u a, \u b \in M(n,r).\]

We recall the following lemma:

\begin{Lemma}\label{scqschur}
We have $f_{\u a,\u b,\u c}\neq 0$ only if $co(\u a)=ro(\u b)$ 
and $(ro(\u a),co(\u b))=(ro(\u c),co(\u c))$. In this case, we 
have
\[f_{\u a,\u b,\u c}=h_{\mu}^{-1}g_{\sigma(\u a),\sigma(\u b),\sigma(\u c)}.\]
where $\mu = co(\u a) = ro(\u b)$ and 
$h_{\mu}=\sum\limits_{w\in W_{\mu}}v^{2l(w)-l(w_{\mu})}$ (here $w_{\mu}$
denotes the longest element in $W$) 
and $g_{\sigma(\u a),\sigma(\u b),\sigma(\u c)}$ is the structure
constant of\/ $\Ha$ defined in Section~\ref{Hecke}.
\end{Lemma}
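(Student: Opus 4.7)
The plan is to exploit the realisation $\Sa_q(n,r)=\End_\Ha(\bigoplus_{\lambda\in\Lambda(n,r)} x_\lambda\Ha)$ from~\ref{qschuralg}, which yields the orthogonal decomposition
\[
\Sa_q(n,r)=\bigoplus_{\lambda,\mu\in\Lambda(n,r)}\Hom_\Ha(x_\mu\Ha,\,x_\lambda\Ha),
\]
together with the assignment $\thu{a}\in\Hom_\Ha(x_{co(\u a)}\Ha,\,x_{ro(\u a)}\Ha)$ for every $\u a\in M(n,r)$. The first claim is then immediate: the product $\thu a\thu b$ vanishes unless $co(\u a)=ro(\u b)$, and in that case lies in $\Hom_\Ha(x_{co(\u b)}\Ha,\,x_{ro(\u a)}\Ha)$, so comparing with the expansion $\sum_{\u c}f_{\u a,\u b,\u c}\thu c$ forces $(ro(\u c),co(\u c))=(ro(\u a),co(\u b))$ whenever $f_{\u a,\u b,\u c}\neq 0$.

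For the explicit formula, I would appeal to Du's description from~\cite{DuKLBasis}: for $\u a=(\lambda,w,\mu)$, the element $\thu a$ annihilates $x_\nu$ for $\nu\neq\mu$ and sends $x_\mu$ to a specific scalar multiple of $C_{\sigma(\u a)}\in x_\lambda\Ha$, the scalar being fixed by an appropriate bar-invariance normalisation. Setting $\lambda=ro(\u a)$, $\mu=co(\u a)=ro(\u b)$ and $\nu=co(\u b)$, the plan is to compute both $\thu a\thu b(x_\nu)$ and $\sum_{\u c}f_{\u a,\u b,\u c}\thu c(x_\nu)$ directly and match coefficients of the $C_{\sigma(\u c)}$.

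The main computation rests on the identity
\[
x_\mu\,C_{\sigma(\u b)}=v^{l(w_\mu)}\,h_\mu\,C_{\sigma(\u b)},
\]
valid because every $s\in S\cap W_\mu$ lies in the left descent set of $\sigma(\u b)$ (since $\sigma(\u b)$ is the longest element of its double coset), so $C_sC_{\sigma(\u b)}=(v+v^{-1})C_{\sigma(\u b)}$ and one iterates along a reduced expression of $w_\mu$. This lets one write $C_{\sigma(\u b)}=x_\mu\cdot(v^{-l(w_\mu)}h_\mu^{-1}C_{\sigma(\u b)})$; applying the right $\Ha$-linearity of $\thu a$ then converts $\thu a(C_{\sigma(\u b)})$ into $h_\mu^{-1}$ times a constant multiple of $C_{\sigma(\u a)}C_{\sigma(\u b)}$, the extra powers of $v$ cancelling between the normalisation scalars of $\thu a$, $\thu b$ and $\thu c$. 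Expanding $C_{\sigma(\u a)}C_{\sigma(\u b)}=\sum_z g_{\sigma(\u a),\sigma(\u b),z}C_z$ in the KL basis of $\Ha$ and matching coefficients yields $f_{\u a,\u b,\u c}=h_\mu^{-1}g_{\sigma(\u a),\sigma(\u b),\sigma(\u c)}$.

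The main obstacle I anticipate is justifying that only the basis elements $C_z$ with $z=\sigma(\u c)$ for some $\u c\in M_{\lambda,\nu}$ appear on the left, so that coefficient matching is unambiguous. This should follow from the observation that $C_{\sigma(\u a)}C_{\sigma(\u b)}$ lies in $x_\lambda\Ha x_\nu$, whose KL basis is precisely $\{C_{\sigma(\u c)}\mid\u c\in M_{\lambda,\nu}\}$—itself a consequence of the descent properties of longest double-coset representatives and the characterisation of $x_\lambda\Ha x_\nu$ by two-sided $W_\lambda$-$W_\nu$-invariance.
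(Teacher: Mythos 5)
Your argument for the vanishing conditions is exactly the paper's: the paper also identifies $\thu a$ as an element of $\Hom_\Ha(x_{co(\u a)}\Ha,\,x_{ro(\u a)}\Ha)$ and reads off that $f_{\u a,\u b,\u c}=0$ unless $co(\u a)=ro(\u b)$ and $(ro(\u a),co(\u b))=(ro(\u c),co(\u c))$. For the explicit formula the paper simply defers to \cite[Prop.~3.4]{DuKLBasis} ("the proof there works"), whereas you sketch the computation; your sketch is a sound reconstruction of Du's argument. Two remarks on it. First, the key scalar identity is correct, but the cleanest derivation is not by iterating $C_s$ (note $x_\mu$ is not a product of the $C_s$): rather, every $s\in S\cap W_\mu$ is a left descent of $\sigma(\u b)$, so $T_sC_{\sigma(\u b)}=vC_{\sigma(\u b)}$, hence $T_wC_{\sigma(\u b)}=v^{l(w)}C_{\sigma(\u b)}$ for all $w\in W_\mu$ and $x_\mu C_{\sigma(\u b)}=\bigl(\sum_{w\in W_\mu}v^{2l(w)}\bigr)C_{\sigma(\u b)}=v^{l(w_\mu)}h_\mu C_{\sigma(\u b)}$, where $w_\mu$ is the longest element of $W_\mu$ (the lemma's parenthetical "longest element in $W$" is a typo). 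Second, the "main obstacle" you flag --- that only $C_z$ with $z=\sigma(\u c)$, i.e.\ $z\in D^+_{\lambda,\nu}$, can occur in $C_{\sigma(\u a)}C_{\sigma(\u b)}$ --- is precisely Lemma~\ref{lem3} of the paper, proved there from the descent/cell characterisation $g_{x,y,z}\neq 0\Rightarrow z\le_{\sL}y,\ z\le_{\sR}x$; so that step is not actually a gap. What your route buys is a self-contained proof of the formula (modulo pinning down Du's normalisation scalars for $\thu a$, which you leave implicit in "the extra powers of $v$ cancelling"); what the paper's route buys is brevity and the clarification of how Du's statement must be read, which is the real content of this lemma in the paper.
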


\begin{proof}
See \cite[Prop.~3.4]{DuKLBasis}. We want to explain why we have a further
hypothesis here than in \cite[Prop.~3.4]{DuKLBasis}: For $\u a =
(\lambda,w,\mu) \in M(n,r)$ the element $\thu a$ is by definition a linear
combination of basis elements $\phi_{\lambda,\mu}^z$ for $z \in
\D_{\lambda,\mu}$. Thus, viewed as endomorphism of $\bigoplus_{\lambda
\in \Lambda(n,r)} x_\lambda \Ha$ it vanishes on all summands except
on $x_\mu \Ha$ and maps into the summand $x_\lambda \Ha$. Thus, if either
$co(\u a) \neq ro(\u b)$ or $(ro(\u a),co(\u b)) \neq (ro(\u c),co(\u c))$,
the structure constant $f_{\u a,\u b,\u c}$ vanishes also. If both
equations hold, the proof in \cite[Prop.~3.4]{DuKLBasis} works
using $g_{\sigma(\u a),\sigma(\u b),\sigma(\u c)}$.

We are not claiming that \cite[Prop.~3.4]{DuKLBasis} is wrong as stated
there. However, the notation $g_{\u a,\u b,\u c}$ there needs proper
interpretation (see \cite[Section 3.3]{DuKLBasis}), a problem we 
avoid here.
\end{proof}

\begin{remark}
To further explain the just mentioned change of notation, consider the
following: Let $n = r = 3$, $\lambda := (2,1,0)$, $\mu := (1,1,1)$, and
$\nu := (2,1,0)$. Then $W$ is the symmetric group on $3$ letters, generated
by the two Coxeter generators $s_1 = (1,2)$ and $s_2 = (2,3)$. 
Thus $\D_{\lambda,\mu}^+ := \{ s_1, s_1s_2, s_1s_2s_1 \}$, $\D_{\mu,\nu}^+ = 
\{ s_1, s_2s_1, s_1 s_2 s_1 \}$ and $\D_{\lambda,\nu}^+ = \{ s_1, s_1s_2s_1 \}$.

By the relations, we have $T_{s_1} \cdot T_{s_2s_1} = T_{s_1s_2s_1}$ and
thus $g_{s_1,s_2s_1,s_1s_2s_1} = 1$. We now set $\u a := (\lambda,\id,\mu)$,
$\u b := (\mu,s_2,\nu)$ and $\u c := (\lambda,s_2,\nu)$. Thus, we get
\[ f_{\u a,\u b,\u c} = 1 \cdot g_{\sigma(\u a),\sigma(\u b),\sigma(\u c)}
= g_{s_1,s_2s_1,s_1s_2s_1} = 1, \] 
since $h_\mu = 1$ here.

However, if we set $\u{a'} := (\mu,s_1,\mu)$, then 
$f_{\u{a'},\u b,\u c} = 0$, because of $ro(\u{a'}) \neq ro(\u c)$
and the arguments in the proof of Lemma~\ref{scqschur}. On the other
hand, we have $ro(\u{a'}) = co(\u b)$ and 
$g_{\sigma(\u{a'}),\sigma(\u b),\sigma(\u c)} =
 g_{s_1,s_2s_1,s_1s_2s_1} = 1$. This shows, that we indeed need all
the hypothesis in Lemma~\ref{scqschur}. The statement in 
\cite[Prop.~3.4]{DuKLBasis} is true if one interprets $g_{\u{a'},\u b,\u c}$
to be zero.
\end{remark}

\begin{Def}[The $\a$-function and the distinguished elements]
\label{defD}
Following~\cite[Section 2]{DuAsymp}, we extend the $\a$-function to $M(n,r)$ by
setting $\a(\u a)=\a(\sigma(\u a))$ for every $\u a\in M(n,r)$ and we
extend the set $\D$ to the set \[\D(n,r)=\{\u d\in
M(n,r)\ |\ co(\u d)=ro(\u d),\, \sigma(\u d)\in\D\}.\]
Moreover,  for every $\u a,\,\u b,\,\u c\in M(n,r)$, we define
\[\gamma_{\u a,\u b,\u c^t}=\left\{
\begin{array}{ll}
\gamma_{\sigma(\u a),\sigma(\u b),\sigma(\u c^t)}
=\gamma_{\sigma(\u a),\sigma(\u b),\sigma(\u c)^{-1}} &\textrm{if } f_{\u a,\u
b,\u c}\neq 0,\\
0&\textrm{otherwise}.
\end{array}\right.\]
\end{Def}

\begin{remark}
Note that our definition for $\gamma_{\u a,\u b,\u c}$ differs slightly
from the one in \cite[Section 2.2]{DuAsymp}. His $\gamma_{\u a, \u b, \u
c}$ is our $\gamma_{\u a, \u b, \u c^t}$. With our definition we follow
the setup in \cite{Uneq} more closely and get nicer cyclic symmetries
in our formulas.
\end{remark}

\begin{remark}
In comparison to \cite[Section 2.1]{DuAsymp} we added the explicit hypothesis
for the elements $\u d \in \D(n,r)$ that $ro(\u d) = co(\u d)$. However,
this hypothesis is implicit in \cite{DuAsymp}, since otherwise the
statements in \cite[4.1,(a)--(d)]{DuAsymp} and some others would not be true.
\end{remark}

Now, for $\u a,\,\u b\in M(n,r)$, if there is $\u c\in M(n,r)$ such that
$f_{\u c,\u b,\u a}\neq 0$ then we write $\u a\leq_{\sL}\u b$ . We
define $\leq_{\sR}$ by
$\u a\leq_{\sR}\u b$ if and only if $\u a^t\leq_{\sL}\u b^t$. Moreover,
we define $\leq_{\sLR}$ as in the Iwahori-Hecke
algebra case. These relations induce corresponding equivalence relations 
$\sim_{\sL}$, $\sim_{\sR}$ and $\sim_{\sLR}$. We
call the corresponding equivalence classes the left, right and two-sided
cells of $M(n,r)$ respectively.

Let $\Gamma$ be a left cell of $M(n,r)$. We set 
\[ \mathcal{S}_{\leq\Gamma}=\sum_{\u b\leq_{\sL}\u a}A\theta_{\u
b}\quad\textrm{and}\quad\mathcal{S}_{<\Gamma}=\sum_{\u b\leq_{\sL}\u
a,\ \u b\not\sim_{\sL}\u a}A\theta_{\u b},\] 
for some~$\u a\in\Gamma$, both are clearly left ideals of $\Sa_q(n,r)$
by the definition of $\le_{\sL}$.
Then the left cell module $\LC^{(\Gamma)}$ corresponding to $\Gamma$ is
defined as the quotient $\mathcal{S}_{\leq\Gamma}/\mathcal{S}_{<\Gamma}$. 

We define the right cell module $\RC^{(\Gamma)}$
corresponding to a right cell $\Gamma$ of $M(n,r)$ similarly.
To see that we get right ideals we have to use Lemma~\ref{scqschur}
and $g_{x,y,z} = g_{y^{-1},x^{-1},z^{-1}}$ for $x,y,z \in W$ (see 
\cite[13.2.(e)]{Uneq}) together with $\sigma(\u a^t) = \sigma(\u a)^{-1}$.
This implies $f_{\u a,\u b,\u c} = 0$ if and only if 
$f_{\u b^t,\u a^t,\u c^t} = 0$.

\section{Lusztig's conjectures for the $q$-Schur
algebra}\label{sec:conj}

In this section, we prove that the $q$-Schur algebra satisfies properties
very similar to $\textbf{P1},\ldots,\textbf{P15}$ for the
Iwahori-Hecke algebra. First, we give some preliminary results.

\begin{Lemma}\label{sigmale}
If $\u a\leq_{\sL}\u b$ (resp. $\leq_{\sR}$, $\leq_{\sLR}$),
then $\sigma(\u a)\leq_{\sL}\sigma(\u b)$ (resp. $\leq_{\sR}$,
$\leq_{\sLR}$).
\end{Lemma}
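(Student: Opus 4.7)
The plan is to reduce each of the three assertions to the corresponding fact in $W$, using Lemma~\ref{scqschur} to translate between structure constants.

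For $\leq_{\sL}$, I would first treat a single step: suppose there exists $\u c \in M(n,r)$ with $f_{\u c,\u b,\u a} \neq 0$. Lemma~\ref{scqschur} immediately gives $g_{\sigma(\u c),\sigma(\u b),\sigma(\u a)} \neq 0$, and I would invoke the Hecke-algebra fact that $g_{x,y,z} \neq 0$ for \emph{any} $x \in W$ already implies $z \leq_{\sL} y$. Extending across a chain of single steps by transitivity then yields $\sigma(\u a) \leq_{\sL} \sigma(\u b)$.

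The $\leq_{\sR}$ case reduces to the $\leq_{\sL}$ case via the duality $\u a \leq_{\sR} \u b \Leftrightarrow \u a^t \leq_{\sL} \u b^t$ built into the definition, combined with $\sigma(\u a^t) = \sigma(\u a)^{-1}$ and the corresponding symmetry $x \leq_{\sR} y \Leftrightarrow x^{-1} \leq_{\sL} y^{-1}$ in $W$ (immediate from $g_{x,y,z} = g_{y^{-1},x^{-1},z^{-1}}$, cf.\ \cite[13.2.(e)]{Uneq} already recalled above). The $\leq_{\sLR}$ case is then automatic: a defining chain alternating between $\leq_{\sL}$ and $\leq_{\sR}$ in $M(n,r)$ transports term by term to such a chain in $W$.

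The only non-obvious ingredient is the implication $g_{x,y,z} \neq 0 \Rightarrow z \leq_{\sL} y$ for arbitrary $x \in W$, not merely $x \in S$. I would establish this by observing that $\sum_{y' \leq_{\sL} y} A\cdot C_{y'}$ is a left ideal of $\Ha$: it is stable under left multiplication by each $C_s$ with $s \in S$ by the very definition of $\leq_{\sL}$, and $\Ha$ is generated as an $A$-algebra by $\{C_s : s \in S\} \cup \{1\}$. Hence in $C_x C_y = \sum_z g_{x,y,z} C_z$ the coefficients $g_{x,y,z}$ can be nonzero only when $z \leq_{\sL} y$, which is exactly what is needed. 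This small Hecke-algebra observation is really the heart of the lemma; everything else is bookkeeping through Lemma~\ref{scqschur}.
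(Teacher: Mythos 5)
Your proof is correct and follows essentially the same route as the paper's: translate $f_{\u c,\u b,\u a}\neq 0$ into $g_{\sigma(\u c),\sigma(\u b),\sigma(\u a)}\neq 0$ via Lemma~\ref{scqschur} and conclude in $W$, with the $\leq_{\sR}$ and $\leq_{\sLR}$ cases handled by transposition and chains. The only difference is that you spell out (correctly, via the left-ideal argument) the standard fact that $g_{x,y,z}\neq 0$ implies $z\leq_{\sL}y$ for arbitrary $x\in W$, which the paper takes for granted from \cite{Uneq}.
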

\begin{proof}
Since $\u a\leq_{\sL}\u b$, there is $\u c\in M(n,r)$
such that $f_{\u c,\u b,\u a}\neq 0$.
But we have $f_{\u c,\u b,\u a}=
h_{co(a)}^{-1}g_{\sigma(\u c)
,\sigma(\u b),\sigma(\u a)}$ with $h_{co(a)}^{-1}\neq 0$.
Thus $g_{\sigma(\u c),\sigma(\u b),\sigma(\u a)}\neq 0$
and $\sigma(\u a)\leq_{\sL}\sigma(\u b)$.
\end{proof}

\begin{Lemma}\label{roco}
If $\u a\leq_{\sL}\u b$, then $co(\u a)=co(\u b)$. If $\u a \leq_{\sR} \u
b$, then $ro(\u a) = ro(\u b)$.
\end{Lemma}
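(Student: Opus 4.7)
The plan is to reduce both statements to a direct consequence of Lemma~\ref{scqschur}, which gives necessary conditions on $ro$ and $co$ for a structure constant $f_{\u a,\u b,\u c}$ to be nonzero.

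First I would handle the one-step case of $\leq_{\sL}$. Suppose $\u a \preccurlyeq_{\sL} \u b$ in the generating sense, i.e.\ there exists $\u c \in M(n,r)$ with $f_{\u c,\u b,\u a} \neq 0$. By Lemma~\ref{scqschur}, this forces $co(\u c) = ro(\u b)$ and moreover $(ro(\u c),co(\u b)) = (ro(\u a),co(\u a))$. The second component of this equality is precisely $co(\u a) = co(\u b)$, which is what we need.

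Next I would extend to the transitive closure. Writing $\u a = \u a_0, \u a_1, \ldots, \u a_n = \u b$ for a chain realising $\u a \leq_{\sL} \u b$, each step $\u a_i \preccurlyeq_{\sL} \u a_{i+1}$ gives $co(\u a_i) = co(\u a_{i+1})$ by the argument above, and chaining these equalities yields $co(\u a) = co(\u b)$.

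Finally, for the right-cell statement I would use the definition $\u a \leq_{\sR} \u b \iff \u a^t \leq_{\sL} \u b^t$. Since $\u a^t = (\mu,w^{-1},\lambda)$ for $\u a = (\lambda,w,\mu)$, we have $co(\u a^t) = ro(\u a)$ and similarly for $\u b$. Applying the already-proved left-cell statement to $\u a^t \leq_{\sL} \u b^t$ gives $co(\u a^t) = co(\u b^t)$, i.e.\ $ro(\u a) = ro(\u b)$.

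There is essentially no obstacle here: the whole content sits in Lemma~\ref{scqschur}, and the only thing that requires a moment's care is keeping track of which coordinate ($ro$ or $co$) gets pinned down by the $(ro,co)$-equality in that lemma and how it behaves under the transpose $\u a \mapsto \u a^t$.
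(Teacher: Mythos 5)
Your proof is correct and is essentially the paper's own argument: both reduce immediately to the condition $(ro(\u c),co(\u b))=(ro(\u a),co(\u a))$ from Lemma~\ref{scqschur} and obtain the $\leq_{\sR}$ statement by transposition. The only difference is your extra step of chaining through a transitive closure, which is harmless but not needed, since the paper defines $\leq_{\sL}$ on $M(n,r)$ directly as the one-step relation ``there exists $\u c$ with $f_{\u c,\u b,\u a}\neq 0$''.
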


\begin{proof}Since $\u a\leq_{\sL}\u b$ there is $\u c\in M(n,r)$ such
that $f_{\u c,\u b, \u a}\neq 0$. From Lemma~\ref{scqschur} follows that
$(ro(\u a),co(\u a))=(ro(\u c), co(\u b))$ and the result is proved.
\end{proof}

\begin{Lemma}\label{lem3}
Let $\lambda,\,\mu,\,\nu\in\Lambda(n,r)$, $x\in D_{\lambda,\mu}^+$ and
$y\in D_{\mu,\nu}^+$. If $g_{x,y,z}\neq 0$ for some $z\in W$, then
$z\in D_{\lambda,\nu}^+$.
\end{Lemma}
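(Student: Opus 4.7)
The plan is to prove that if $g_{x,y,z} \neq 0$, then $sz < z$ for every simple reflection $s \in W_\lambda$ and $zs < z$ for every simple reflection $s \in W_\nu$. These descent conditions characterize $z$ as the unique longest element of the double coset $W_\lambda z W_\nu$, giving $z \in D_{\lambda,\nu}^+$.

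Since $x \in D_{\lambda,\mu}^+$ is the longest element in its double coset, $sx < x$ for every $s \in S \cap W_\lambda$, so the Kazhdan--Lusztig multiplication rule yields $C_s C_x = (v+v^{-1}) C_x$. Multiplying by $C_y$ on the right,
\[
(v+v^{-1}) C_x C_y \;=\; C_s (C_x C_y) \;=\; \sum_{z \in W} g_{x,y,z}\, C_s C_z.
\]
Suppose for contradiction that $g_{x,y,z_1} \neq 0$ but $s z_1 > z_1$. The coefficient of $C_{z_1}$ on the left is $(v+v^{-1}) g_{x,y,z_1}$, which is nonzero since $A$ is a domain. On the right, expand each $C_s C_z$ by the Kazhdan--Lusztig rule: it equals $(v+v^{-1}) C_z$ when $sz < z$, and $C_{sz} + \sum_{w < z,\, sw < w} \mu(w,z)\, C_w$ when $sz > z$. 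A contribution of $C_{z_1}$ from the first case requires $z = z_1$ with $sz_1 < z_1$, excluded by hypothesis. A contribution $C_{sz}$ from the second case requires $z = sz_1$, but then $sz_1 > z_1$ gives $sz = z_1 < z$, contradicting $sz > z$. A contribution from $w = z_1$ in the $\mu$-sum requires $sz_1 < z_1$, again excluded. Hence the right-hand coefficient of $C_{z_1}$ vanishes, a contradiction, so in fact $sz_1 < z_1$.

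The symmetric argument, multiplying on the right by $C_s$ for $s \in S \cap W_\nu$ and using $C_y C_s = (v+v^{-1}) C_y$, shows $z s < z$ for every such $s$. Combined, these descent conditions place $z$ in $D_{\lambda,\nu}^+$. The only delicate point is the case analysis ruling out every potential source of $C_{z_1}$ on the right-hand side; it is elementary but requires invoking $s z_1 > z_1$ in each sub-case to close the argument.
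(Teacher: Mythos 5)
Your proof is correct, and it reaches the same two descent conditions that the paper's proof establishes, but by a genuinely more elementary route. The paper observes that $g_{x,y,z}\neq 0$ forces $z\le_{\sL} y$ and $z\le_{\sR} x$ and then simply cites Lusztig's result (Lemma 8.6 of \emph{Hecke algebras with unequal parameters}) that left/right descent sets propagate along $\le_{\sL}$ and $\le_{\sR}$; you instead re-derive exactly the instance of that propagation you need, by writing $(v+v^{-1})C_xC_y = C_s(C_xC_y)=\sum_z g_{x,y,z}C_sC_z$ and comparing coefficients of $C_{z_1}$ via the multiplication rule for $C_sC_z$. Your case analysis is complete and the sub-case $z=sz_1$ is correctly ruled out by the hypothesis $sz_1>z_1$. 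What your approach buys is self-containedness (no appeal to the cited lemma); what it costs is length, and it silently re-proves a standard fact. Both arguments then rest on the same final step, namely that having all of $S\cap W_\lambda$ as left descents and all of $S\cap W_\nu$ as right descents characterises the longest element of the double coset $W_\lambda z W_\nu$; the paper also uses this without proof, so you are not introducing any gap the paper does not already tolerate, though in a fully self-contained write-up this characterisation would deserve a sentence of justification as well.
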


\begin{proof}
For $\lambda \in \Lambda(n,r)$ we set $S_{\lambda} := W_{\lambda} \cap S$, the
set of Coxeter generators of the parabolic subgroup $W_\lambda$.
Let $x \in \D_{\lambda,\mu}^+$ and $y \in \D_{\mu,\nu}^+$ and 
$g_{x,y,z} \neq 0$. On one hand, this means that $l(sx) < l(x)$ for all 
$s \in S_\lambda$ and $l(ys) < l(y)$ for all $s \in S_\nu$. On the other
hand, we get $z \le_{\sL} y$ and $z \le_{\sR} x$ and thus
$l(zs) < l(z)$ for all $s \in S$ with $l(ys)<l(y)$ and $l(sz) < l(s)$ for all
$s \in S$ with $l(sx) < l(x)$ by \cite[Lemma 8.6]{Uneq}. Thus we have
in particular that $l(zs) < l(z)$ for all $s \in S_\nu$ and $l(sz) < l(z)$
for all $s \in S_\lambda$. Hence $z$ is
the longest element in its $W_\lambda$-$W_\nu$-double coset in $W$.
\end{proof}

\begin{Lemma}\label{rightcells}
We have $\u a\leq_{\sR}\u b$ if and only if there is a $\u c \in M(n,r)$
with $f_{\u b,\u c,\u a} \neq 0$.
\end{Lemma}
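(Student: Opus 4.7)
The plan is to reduce the statement to the already-noted identity $f_{\u a,\u b,\u c} = 0 \iff f_{\u b^t,\u a^t,\u c^t} = 0$, which was recorded in the paragraph introducing right cell modules just before Section~\ref{sec:conj}. The assertion of Lemma~\ref{rightcells} is a symmetric rewriting of the definition of $\leq_{\sR}$, so the work is essentially bookkeeping with the transpose operation.

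First I would unwind definitions: by definition, $\u a \leq_{\sR} \u b$ means $\u a^t \leq_{\sL} \u b^t$, which in turn means there exists $\u{c}' \in M(n,r)$ such that $f_{\u{c}',\u b^t,\u a^t} \neq 0$. I want to rewrite this condition as the existence of some $\u c \in M(n,r)$ with $f_{\u b,\u c,\u a}\neq 0$, and the natural candidate for the bijection between the two sets of witnesses is $\u c := (\u{c}')^t$ (equivalently $\u{c}' = \u c^t$), since transposition is an involution on $M(n,r)$.

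Next I would verify the equivalence of non-vanishing along this bijection. Applying the identity $f_{\u x,\u y,\u z} = 0 \iff f_{\u y^t,\u x^t,\u z^t} = 0$ (from the paragraph preceding Section~\ref{sec:conj}) to the triple $(\u x,\u y,\u z) = (\u c^t,\u b^t,\u a^t)$ and using that transposition is involutive gives
\[
  f_{\u c^t,\u b^t,\u a^t} \neq 0 \iff f_{\u b,\u c,\u a} \neq 0,
\]
which is exactly what is required. As a sanity check, one can see directly from Lemma~\ref{scqschur} that both vanishing conditions are controlled by the same data: the equality $co(\u c^t) = ro(\u b^t)$ is the same as $ro(\u c) = co(\u b)$, the matching of boundary indices $(ro(\u c^t),co(\u b^t)) = (ro(\u a^t),co(\u a^t))$ translates via transposition into $(ro(\u b),co(\u c)) = (ro(\u a),co(\u a))$, and the structure constants in $\Ha$ satisfy $g_{\sigma(\u c^t),\sigma(\u b^t),\sigma(\u a^t)} = g_{\sigma(\u b),\sigma(\u c),\sigma(\u a)}$ by $\sigma(\u x^t) = \sigma(\u x)^{-1}$ together with \cite[13.2.(e)]{Uneq}.

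There is no real obstacle here; the only mild care needed is to be sure that the symmetry identity $f_{\u a,\u b,\u c} = 0 \iff f_{\u b^t,\u a^t,\u c^t} = 0$ really cycles the slots the way we need. Because the identity swaps the first two entries (in addition to transposing them), applying it once to $f_{\u c^t,\u b^t,\u a^t}$ produces $f_{\u b,\u c,\u a}$ rather than $f_{\u c,\u b,\u a}$, which is precisely the form demanded by the statement of the lemma.
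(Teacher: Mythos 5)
Your proof is correct and follows essentially the same route as the paper: unwind $\u a \leq_{\sR} \u b$ to $\u a^t \leq_{\sL} \u b^t$, exhibit a witness $\u c'$, and apply the identity $f_{\u x,\u y,\u z}=0 \iff f_{\u y^t,\u x^t,\u z^t}=0$ from the end of Section~2.2 with $\u c = (\u c')^t$. The extra verification via Lemma~\ref{scqschur} is a harmless re-derivation of that identity rather than a new ingredient.
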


\begin{proof}
By definition, $\u a\leq_{\sR} \u b$ is equivalent to $\u a^t \leq_{\sL} \u
b^t$.
This in turn means that there is a $\u c \in M(n,r)$ such that
$f_{\u c^t, \u b^t, \u a^t} \neq 0$.
As mentioned at the end of Section~\ref{qschuralg} we have 
$f_{\u b,\u c, \u a} = 0$ if and only if $f_{\u c^t,\u b^t,\u a^t} = 0$
which directly implies the statement in the lemma.
\end{proof}

\begin{Prop} The following properties hold for the $q$-Schur algebra:

\renewcommand{\arraystretch}{1}
\begin{tabular}{lp{4.3in}}
\textbf{Q1}&For any $\u a\in M(n,r)$ we have $\a(\u
a)\leq\Delta(\sigma(\u a))$.\\
\textbf{Q2}&If $\gamma_{\u a,\u b, \u d}\neq 0$ for some $\u
d\in\D(n,r)$, then we have $\u b=\u a^t$.\\
\textbf{Q3}&For every $\u a\in M(n,r)$, there is a unique $\u
d\in\D(n,r)$ with $\gamma_{\u a^t,\u a,\u d}\neq 0$.\\
\textbf{Q4}&If $\u a\leq_{\sLR}\u b$, then $\a(\u a)\geq \a(\u b)$. \\
\textbf{Q5}& If $\u d\in\D(n,r)$ and $\u a\in M(n,r)$ are such
that $\gamma_{\u a^t,\u a,\u d}\neq 0$, then $\gamma_{\u a^t,\u a,\u
d}=1$.\\
\textbf{Q6}& For $\u d\in\D(n,r)$, we have $\u d=\u d^{t}$.\\
\textbf{Q7}& For every $\u a,\,\u b,\,\u c\in M(n,r)$, we
have $\gamma_{\u a,\u b,\u c}=\gamma_{\u b,\u c,\u a}=\gamma_{\u
c,\u a,\u b}$.\\
\textbf{Q8}& Let $\u a,\,\u b,\,\u c\in M(n,r)$ be such that $\gamma_{\u a,
\u b,\u c}\neq 0$, then $\u a \sim_{\sL}\u b^t$, $\u b\sim_{\sL} \u
c^t$\\& 
and $\u c \sim_{\sL}\u a^t$.\\
\textbf{Q9}&If $\u a\leq_{\sL}\u b$ and $\a(\u a)=\a(\u b)$, then $\u a\sim_{\sL}\u b$.\\
\textbf{Q10}&If $\u a\leq_{\sR}\u b$ and $\a(\u a)=\a(\u b)$, then $\u a\sim_{\sR}\u b$.\\
\textbf{Q11}&If $\u a\leq_{\sLR}\u b$ and $\a(\u a)=\a(\u b)$, then $\u a\sim_{\sLR}\u b$.\\
\textbf{Q13}& Every left cell contains a unique element $\u d\in\D(n,r)$ and 
$\gamma_{\u a^t,\u a,\u d}\neq 0$\\& for every $\u a\sim_{\sL}\u d$.\\
\textbf{Q14}& For every $\u a\in M(n,r)$, we have $\u a\sim_{\sLR}\u
a^{t}$.\\
\textbf{Q15}& Let $v'$ be a second indeterminate and let
$f_{x,y,z}'\in\Z[v',v'^{-1}]$ be obtained from $f_{x,y,z}$ by the
substitution $v\mapsto v'$. If $\u a,\u a',\u b,\u c\in W$ satisfy
$\a(\u c) =\a(\u b)$,
then 
$$\sum_{\u b'}f_{\u c,\u a',\u b'}'f_{\u a,\u b',\u b}=\sum_{\u b'}f_{\u
a,\u c,\u b'}f_{\u b',\u a',\u b}'.$$
\end{tabular}
\end{Prop}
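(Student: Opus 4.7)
The plan is to reduce each property $\textbf{Q}i$ to its Hecke-algebra analogue $\textbf{P}i$ via the map $\sigma\colon M(n,r) \to W$. The key translation device is Lemma~\ref{scqschur}, which writes $f_{\u a,\u b,\u c}$ as a nonzero scalar multiple of $g_{\sigma(\u a),\sigma(\u b),\sigma(\u c)}$ whenever the ro/co compatibility conditions hold. Combined with the definitions $\a(\u a) = \a(\sigma(\u a))$, $\sigma(\u a^t) = \sigma(\u a)^{-1}$, and $\gamma_{\u a,\u b,\u c^t} = \gamma_{\sigma(\u a),\sigma(\u b),\sigma(\u c)^{-1}}$ (when $f_{\u a,\u b,\u c} \neq 0$), this lets me transport each $\textbf{P}i$ upstairs, provided I keep careful track of the row and column types. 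Lemma~\ref{sigmale}, Lemma~\ref{roco}, Lemma~\ref{lem3}, and Lemma~\ref{rightcells} provide the bookkeeping tools.

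I would first dispatch the statements that are near-quotations of their Hecke-algebra counterparts. $\textbf{Q1}$ follows from $\a(\u a) = \a(\sigma(\u a))$ and $\textbf{P1}$; $\textbf{Q4}$ from Lemma~\ref{sigmale} together with $\textbf{P4}$; $\textbf{Q6}$ by observing that $\u d\in \D(n,r)$ forces $ro(\u d) = co(\u d)$ and $\sigma(\u d^t) = \sigma(\u d)^{-1} = \sigma(\u d)$ (using $\textbf{P6}$), hence $\u d = \u d^t$; and $\textbf{Q7}$ by checking that the three conditions $f_{\u a,\u b,\u c^t} \neq 0$, $f_{\u b,\u c,\u a^t} \neq 0$, $f_{\u c,\u a,\u b^t} \neq 0$ all amount to the same cyclic relations $co(\u a) = ro(\u b)$, $co(\u b) = ro(\u c)$, $co(\u c) = ro(\u a)$, then applying $\textbf{P7}$ to the $\sigma$-images.

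Next I would handle the distinguished-element statements $\textbf{Q2}$, $\textbf{Q3}$, $\textbf{Q5}$. Given $\u a$, feed $\sigma(\u a)$ into $\textbf{P3}$ to obtain the unique $d\in\D$ with $\gamma_{\sigma(\u a)^{-1},\sigma(\u a),d}\neq 0$. Lemma~\ref{lem3} applied to $g_{\sigma(\u a^t),\sigma(\u a),d}$ forces $d$ to be the longest element in its $W_{co(\u a)}$-$W_{co(\u a)}$ double coset, so $\u d := (co(\u a),\ell_{co(\u a),co(\u a)}^{-1}(d),co(\u a))$ lies in $\D(n,r)$; by Lemma~\ref{scqschur} it realises $\gamma_{\u a^t,\u a,\u d}\neq 0$. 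Uniqueness is automatic because any competing $\u d'$ must share both $\sigma(\u d') = d$ and $co(\u d') = co(\u a)$. The same scheme, with $\u b$ forced to equal $\u a^t$ by comparing $\sigma$-values and ro/co types, proves $\textbf{Q2}$, and $\textbf{Q5}$ is then a direct quote of $\textbf{P5}$. $\textbf{Q8}$ is similar: translate $\gamma_{\u a,\u b,\u c}\neq 0$ down to $\gamma_{\sigma(\u a),\sigma(\u b),\sigma(\u c)}\neq 0$, invoke $\textbf{P8}$ to get three left-cell equivalences in $W$, and lift them using the fact that the pairs $(\u a,\u b^t)$, $(\u b,\u c^t)$, $(\u c,\u a^t)$ share the required column types by Lemma~\ref{roco}.

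The main obstacle is the lifting of $W$-level cell equivalences back to $M(n,r)$, which is the substance of $\textbf{Q9}$--$\textbf{Q14}$. My plan is to prove $\textbf{Q13}$ first by combining $\textbf{Q3}$, $\textbf{Q8}$, and the uniqueness in $\textbf{P13}$, so each left cell of $M(n,r)$ contains exactly one element of $\D(n,r)$. With $\textbf{Q13}$ available, $\textbf{Q9}$ reduces to showing that $\u a\leq_{\sL}\u b$ with $\a(\u a) = \a(\u b)$ forces the two distinguished representatives to coincide, via $\textbf{Q4}$ and $\textbf{P9}$ applied to $\sigma(\u a), \sigma(\u b)$; $\textbf{Q10}$ follows by transposing and using Lemma~\ref{rightcells}, and $\textbf{Q11}$ from $\textbf{Q4}$ combined with $\textbf{Q9}$ and $\textbf{Q10}$. $\textbf{Q14}$ transfers $\textbf{P14}$ applied to $\sigma(\u a)$ by recognising $\sigma(\u a^t) = \sigma(\u a)^{-1}$ and chaining left and right moves within a common two-sided cell. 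Finally, $\textbf{Q15}$ is a long but routine calculation: both sides force the same ro/co constraints on $\u b'$, expansion via Lemma~\ref{scqschur} produces a common prefactor $h_{\lambda_1}^{-1}h_{\lambda_2}^{-1}$, Lemma~\ref{lem3} harmlessly extends the inner sum over all of $W$, and $\textbf{P15}$ closes the identity. I expect the ro/co bookkeeping underlying $\textbf{Q8}$, $\textbf{Q13}$, and $\textbf{Q15}$ to be the principal source of technical difficulty.
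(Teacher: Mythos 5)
Your overall architecture (transport \textbf{P1}--\textbf{P15} along $\sigma$, prove \textbf{Q13} before \textbf{Q9}) matches the paper's, and most individual steps are sound. The genuine gap is in \textbf{Q8}, and it matters because \textbf{Q8} is exactly what feeds \textbf{Q13} and hence the whole chain \textbf{Q9}--\textbf{Q14}. You propose to invoke \textbf{P8} to get $\sigma(\u a)\sim_{\sL}\sigma(\u b)^{-1}$, etc., in $W$ and then ``lift'' these equivalences to $M(n,r)$ ``using Lemma~\ref{roco}''. But Lemma~\ref{roco} only gives a \emph{necessary} condition for $\u a\le_{\sL}\u b$ (equality of column types), not a sufficient one, and Lemma~\ref{sigmale} goes the wrong way (from $M(n,r)$ down to $W$). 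Nothing in the available toolkit lets you conclude $\u a\sim_{\sL}\u b^t$ from $\sigma(\u a)\sim_{\sL}\sigma(\u b^t)$ plus matching compositions; that converse would itself require proof, and you yourself identify lifting as the hard direction elsewhere. The paper avoids lifting entirely: from $\gamma_{\u a,\u b,\u c}\neq 0$ one has $f_{\u a,\u b,\u c^t}\neq 0$, which by the very definition of $\le_{\sL}$ gives $\u c^t\le_{\sL}\u b$, and the transpose identity $f_{\u b^t,\u a^t,\u c}=f_{\u a,\u b,\u c^t}$ gives $\u c\le_{\sL}\u a^t$; cycling via \textbf{Q7} ($\gamma_{\u b,\u c,\u a}=\gamma_{\u c,\u a,\u b}\neq 0$) produces the reverse one-step inequalities and hence all three equivalences directly at the $M(n,r)$ level, with no appeal to \textbf{P8}.

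The same lifting problem affects your \textbf{Q14}: \textbf{P14} applied to $\sigma(\u a)$ cannot be transported back to $M(n,r)$. The paper instead deduces \textbf{Q14} from \textbf{Q13} and \textbf{Q6}: if $\u d$ is the distinguished element with $\u a\sim_{\sL}\u d$, then $\u a^t\sim_{\sR}\u d^t=\u d$, so $\u a\sim_{\sL}\u d\sim_{\sR}\u a^t$. Two smaller points. First, \textbf{Q5} is not literally a quote of \textbf{P5}, which only yields $\pm 1$; to get $+1$ one needs non-negativity of the Kazhdan--Lusztig coefficients in type A. Second, in \textbf{Q9}, after concluding $\sigma(\u d_a)=\sigma(\u d_b)$ you still need the row and column types to agree (Lemma~\ref{roco} applied to $\u d_a\le_{\sL}\u d_b$ together with $ro(\u d)=co(\u d)$ for distinguished elements) before the two triples can be identified; the paper closes this by explicitly exhibiting $f_{\u d_a,\u d_a,\u d_b}\neq 0$ to force $\u d_b\le_{\sL}\u d_a$.
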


\begin{proof}
We note that {\bf Q1} is a direct consequence of Property \textbf{P1}.

We now will prove Property {\bf Q2}. We suppose that $\gamma_{\u
a,\u b,\u d}\neq 0$ for some $\u a,\,\u b\in M(n,r)$ and $\u
d\in\D(n,r)$. Since $\gamma_{\u a,\u b,\u d}\neq 0$, it follows
that $f_{\u a,\u b,\u d}\neq 0$. Thus we have $co(\u a)=ro(\u b)$,
$ro(\u a)=ro(\u d)$ and $co(\u b)=co(\u d)$ by Lemma~\ref{scqschur}.
But $co(\u d)=ro(\u d)$ implies $ro(\u a)=co(\u b)$. We now
write $\u a=(\lambda,w_a,\mu)$ and $\u b=(\mu,w_b,\lambda)$. We
have $\gamma_{\u a,\u b,\u d}=\gamma_{\sigma(\u a),\sigma(\u
b),\sigma(\u d)}$. From $\sigma(\u d)\in\D$ we deduce using
\textbf{P2} that $\sigma(\u a)=\sigma(\u b)^{-1}$. It follows that
$\ell_{\lambda,\mu}(w_a)=\ell_{\mu,\lambda}(w_b)^{-1}=\ell_{\lambda,\mu}
(w_b^{-1})$, we get $w_a=w_b^{-1}$ and thus \textbf{Q2} holds.

Let $\u a=(\lambda,w,\mu)\in M(n,r)$. Thanks to Property~\textbf{P3}, there is a
unique $d\in\D$ such that $\gamma_{\sigma(\u a)^{-1},\sigma(\u
a),d}\neq 0$. Since $\sigma(\u a)^{-1}=\sigma(\u a^t)$, we deduce
that $g_{\sigma(\u a^t),\sigma(\u a),d}\neq 0$.
But $\sigma(\u
a^t)\in D_{\mu,\lambda}^+$ and $\sigma(\u a)\in D_{\lambda,\mu}^+$,
then Lemma~\ref{lem3} gives $d\in D_{\mu,\mu}^+$. We
denote by $\widetilde{d}$ the representative of minimal length
of the coset $W_{\mu}dW_{\mu}$ and we set $\u
d := (\mu,\widetilde{d},\mu)$. Then $\u d\in\D(n,r)$
and $\sigma(\u d)=d$. It follows that $\gamma_{\u a^t,\u a,\u d}\neq 0$
and thus \textbf{Q3} holds.

The property~\textbf{Q4} follows from~\textbf{P4} and Lemma~\ref{sigmale}.
The property~\textbf{Q5} directly follows from~\textbf{P5}, since
in our case $W$ is of type A and thus all coefficients of all 
Kazhdan-Lusztig polynomials are
non-negative by \cite[15.1]{Uneq}.

Let $\u d=(\lambda,w,\lambda)\in\D(n,r)$; we have $\sigma(\u
d)\in\D$, thus \textbf{P6} gives $\sigma(\u d)^{-1}=\sigma(\u
d)$. Therefore, we have $\ell_{\lambda,\lambda}(w)=\sigma(\u
d)^{-1}=\sigma(\u d^t)=\ell_{\lambda,\lambda}(w^{-1})$,
and it follows that $w=w^{-1}$; thus \textbf{Q6} holds. The
property~\textbf{Q7} follows directly from~\textbf{P7}.

Suppose that $\gamma_{\u a,\u b,\u c}\neq 0$ for some 
$\u a,\,\u b,\,\u c\in M(n,r)$, then $f_{\u a,\u b,\u c^t}\neq 0$ and
it follows that $co(\u a)=ro(\u b)$ and $(ro(\u a),co(\u b))=(ro(\u
c^t),co(\u c^t))$. Then we have 
\begin{eqnarray*}
f_{\u b^t,\u a^t,\u c}&=&h_{co(\u a)}g_{\sigma(\u b^t),\sigma(\u a^t),
    \sigma(\u c)}\\
  &=&h_{co(\u a)}g_{\sigma(\u b)^{-1},\sigma(\u a)^{-1},\sigma(\u c)}\\
  &=&h_{co(\u a)}g_{\sigma(\u a),\sigma(\u b),\sigma(\u c)^{-1}}\\
  &=&f_{\u a,\u b,\u c^t}. 
\end{eqnarray*}
It follows that $\u c^t\leq_{\sL} \u b$ and $\u c\leq_{\sL}\u a^t$.
Using~\textbf{Q7} and the same arguments applied to $\gamma_{\u b,\u c,\u
a} = \gamma_{\u c,\u a,\u b} \neq 0$, we deduce that $\u a \sim_{\sL}\u b^t$,
$\u b\sim_{\sL} \u c^t$ and $\u c \sim_{\sL} \u a^t$.
Thus~\textbf{Q8} holds.

Next we prove \textbf{Q13}. Let $\u a \in M(n,r)$.
By \textbf{Q3}
there is a unique $\u d \in \D(n,r)$ with $\gamma_{\u a^t,\u a,\u d}
\neq 0$ and for this $\u d$ holds $\u a\sim_{\sL}\u d$ by \textbf{Q8}.
But for $\u d, \u d' \in \D(n,r)$ with $\u d \sim_{\sL} \u d'$ we conclude
$ro(\u d) = co(\u d) = co(\u d') = ro(\u d')$ using Lemma~\ref{roco}
and $\sigma(\u d) = \sigma(\u d')$ using \textbf{P13} since
$\sigma(\u d) \sim_{\sL} \sigma(\u d')$ because of Lemma~\ref{sigmale}. 
Thus we have proved \textbf{Q13}.

Now we prove \textbf{Q9}.
Let $\u a, \u b\in M(n,r)$ with
$\u a \leq_{\sL} \u b$ and $\a(\u a) = \a(\u b)$. 
We denote the unique element of $\D(n,r)$ in the left cell of $\u a$
by $\u d_a$ (resp.~$\u d_b$ for $\u b$).
Using~\textbf{Q4} we deduce that $\a(\u d_a)=\a(\u
a)$ and $\a(\u d_b)=\a(\u b)$. Moreover, we have $\u d_a\leq_{\sL} \u d_b$.
Thus using Lemma~\ref{sigmale} shows that $\sigma(\u d_a)\leq_{\sL}
\sigma(\u d_b)$. Hence, using Property \textbf{P9}, we have
$\sigma(\u d_a)\sim_L\sigma(\u d_b)$.
However, $\sigma(\u d_a)$ and $\sigma(\u d_b)$ lie in $\D$.
Therefore, using \textbf{P13} in the Iwahori-Hecke algebra, we deduce
that $\sigma(\u d_a)=\sigma(\u d_b)$. 
We now prove that $f_{\u d_a,\u d_a,\u d_b}\neq 0$. Since 
$ro(\u d_a)=co(\u d_a)=co(\u d_b)=ro(\u d_b)$ (thanks
to Lemma~\ref{roco}), 
we deduce that
\[f_{\u d_a,\u d_a,\u d_b}=h_{co(\u d_a)}^{-1}g_{\sigma(\u
d_a),\sigma(\u d_a),\sigma(
\u d_b)}.\] Using~\textbf{P13}, we deduce that $\gamma_{\sigma(\u
d_a)^{-1},\sigma(\u d_a),\sigma(\u d_b)}\neq 0$;
hence $g_{\sigma(\u d_a),\sigma(\u d_a),\sigma(\u d_b)}\neq 0$.
Since $h_{co(\u d_a)}^{-1}\neq 0$, it follows that $f_{\u d_a,\u d_a,\u
d_b}\neq 0$.
Hence $\u d_b\leq_{\sL} \u d_a$ and~\textbf{Q9} follows.

Property \textbf{Q10} follows from \textbf{Q9} by transposition since
$\a(\u a) = \a(\u a^t)$ for all $\u a \in M(n,r)$ (use \cite[13.9
(a)]{Uneq}).
Property \textbf{Q11} follows from \textbf{Q9} and \textbf{Q10} and induction.

Let $\u a\in M(n,r)$ and $\u d\in \D(n,r)$ be the unique element such 
that $\u a\sim_{\sL} \u d$ 
given by~\textbf{Q13}. Then $\u a^t\sim_{\sR} \u d^t=\u d$
and~\textbf{Q14} holds.

Finally, we prove \textbf{Q15}. We first remark that $f_{\u c,\u a',\u
b'}'\neq 0$ if and only if $f_{\u
a,\u c,\u b'}\neq 0$, and $f_{\u a,\u b',\u b}\neq 0$ if and only if
$f_{\u b',\u a',\u b}'\neq 0$. Moreover if $f_{\u c,\u a',\u
b'}'\neq 0$, then 
 $f'_{\u c, \u
a',\u b'}=h_{ro(\u a')}'g_{\sigma(\u c), \sigma(\u
a'),\sigma(\u b')}$ and $f_{\u
a,\u c,\u b'}=h_{co(\u a)}g_{\sigma(\u a), \sigma(\u
c),\sigma(\u b')}$. If $f_{\u a,\u c,\u
b'}\neq 0$, then 
 $f_{\u a, \u
c,\u b'}=h_{co(\u a)}g_{\sigma(\u a), \sigma(\u
c),\sigma(\u b')}$ and $f_{\u
b',\u a',\u b}'=h_{ro(\u a')}'g_{\sigma(\u b'), \sigma(\u
a'),\sigma(\u b)}$. Here $h'_{\mu}$ is obtained from $h_{\mu}$ by the
substitution $v\mapsto v'$. We note that $h_{ro(\u a')}$ and $h_{co(\u
a)}$ do not depend on $\u b'$. 
It follows from \textbf{P15} that
$$\begin{array}{lcl}
\sum\limits_{\u b'}f_{\u c,\u a',\u b'}'f_{\u a,\u b',\u b}&=&
h_{ro(\u a')}h_{co(\u a)}
\sum\limits_{\u b'}g_{\sigma(\u c),\sigma(\u a'),\sigma(\u b')}'
g_{\sigma(\u a),\sigma(\u b'),\sigma(\u b)}\\
&=&
h_{ro(\u a')}h_{co(\u a)}
\sum\limits_{\u b'}g_{\sigma(\u
a),\sigma(\u c),\sigma(\u b')}f_{\sigma(\u b'),\sigma(\u a'),\sigma(\u
b)}'\\
&=&
\sum\limits_{\u b'}f_{\u
a,\u c,\u b'}f_{\u b',\u a',\u b}'.
\end{array}$$

\end{proof}

\begin{Prop}
\label{LReq}
If $\u a\sim_{\sL}\u b$ and $\u a\sim_{\sR}\u b$, then $\u a=\u b$.
\end{Prop}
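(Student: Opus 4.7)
The plan is to reduce this to the well-known corresponding statement in the symmetric group $W$. First, from $\u a \sim_{\sL} \u b$ and Lemma~\ref{roco} I get $co(\u a) = co(\u b)$, and from $\u a \sim_{\sR} \u b$ together with the right-cell half of Lemma~\ref{roco} (or Lemma~\ref{roco} applied after transposition, noting that $\sim_{\sR}$ on $M(n,r)$ is defined via $\sim_{\sL}$ on transposes) I get $ro(\u a) = ro(\u b)$. Writing the common row and column as $\lambda$ and $\mu$, we may thus write $\u a = (\lambda, w_a, \mu)$ and $\u b = (\lambda, w_b, \mu)$ for some $w_a, w_b \in D_{\lambda,\mu}$.

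Next I would apply Lemma~\ref{sigmale} in both its left- and right-cell versions to obtain $\sigma(\u a) \sim_{\sL} \sigma(\u b)$ and $\sigma(\u a) \sim_{\sR} \sigma(\u b)$ in $W$. The crucial input now is the classical fact that in the symmetric group, the intersection of a left cell and a right cell under the Kazhdan--Lusztig preorders contains at most one element. This follows from the Robinson--Schensted correspondence: elements of the same left cell share the same $Q$-symbol and elements of the same right cell share the same $P$-symbol, and the pair $(P(w), Q(w))$ determines $w$ uniquely. Hence $\sigma(\u a) = \sigma(\u b)$.

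Finally, since $\sigma(\u a) = \ell_{\lambda,\mu}(w_a)$ and $\sigma(\u b) = \ell_{\lambda,\mu}(w_b)$ and $\ell_{\lambda,\mu}\colon D_{\lambda,\mu} \to D_{\lambda,\mu}^+$ is a bijection by construction, we conclude $w_a = w_b$, and therefore $\u a = \u b$.

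The main obstacle is the middle step: one must invoke (or reprove) the fact that the cell structure of the symmetric group is fine enough that a left cell and a right cell meet in at most one point. Everything else is a matter of transferring the information through the maps $ro$, $co$, and $\sigma$. An alternative route that stays within the framework of this paper would be to combine \textbf{Q13} with its right-cell analogue (giving distinguished elements $\u d_\sL, \u d_\sR \in \D(n,r)$ representing the left and right cells of $\u a = \u b$), and then use \textbf{Q8} and \textbf{Q14} to force these distinguished elements to coincide with $\u a$ and $\u b$ themselves; this reformulation, however, still ultimately rests on the RSK-type fact for $W$.
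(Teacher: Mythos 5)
Your proof is correct and follows essentially the same route as the paper's: reduce to equality of $ro$ and $co$ via Lemma~\ref{roco}, transfer to $W$ via Lemma~\ref{sigmale}, invoke the type-A fact that a left cell and a right cell meet in at most one element, and conclude by injectivity of $\ell_{\lambda,\mu}$. The only difference is that you make explicit the Robinson--Schensted justification that the paper leaves implicit in the phrase ``Since $\Ha$ is of type $A$''.
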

\begin{proof}
Let $\u a=(\lambda_a,w_a,\mu_a)$ and $\u b=(\lambda_b,w_b,\mu_b)$
be such that $\u a\sim_{\sL}\u b$ and $\u a\sim_{\sR}\u b$. We
have $\u a\leq_{\sL}\u b$ and $\u a^t\leq_{\sL}\u b^t$, then
using Lemma~\ref{roco} we deduce that $\mu_a=\mu_b$
and $\lambda_a=\lambda_b$. Using Lemma~\ref{sigmale}, we deduce that $\sigma(\u
a)\sim_{\sL}\sigma(\u b)$ and $\sigma(\u a)\sim_{\sR}\sigma(\u b)$.
Since $\Ha$ is of type $A$, it follows that $\sigma(\u a)=\sigma(\u b)$,
that is
$\ell_{\lambda_a,\mu_a}(w_a)=\ell_{\lambda_a,\mu_a}(w_b)
=\ell_{\lambda_b,\mu_b}(w_b)$. Hence we get
$w_a=w_b$.
\end{proof}

\section{Irreducible cell modules and dual basis}\label{irr}

In this section we view the extension of scalars $K \Sa_q(n,r)$ of the
$q$-Schur algebra $\Sa_q(n,r)$ as a symmetric algebra. This is possible, 
since it is semisimple (see \cite[(9.8)]{CR1}). 
We can take as symmetrising trace form any
$K$-linear form $\tau : K\Sa_q(n,r) \to K$ that is a $K$-linear
combination 
\[ \tau = \sum_{\chi \in \Irr(K \Sa_q(n,r))} \frac{\chi}{c_\chi} \]
of the irreducible characters where the $c_\chi$ are non-zero constants,
the so-called Schur elements (see \cite[7.1.1 and 7.2.6]{GP}). 
Clearly, $\tau$ is non-degenerate.

Having fixed $\tau$, we denote for any $K$-basis $(B_{\u a})_{\u a \in
M(n,r)}$ of $K\Sa_q(n,r)$ its dual basis with respect to $\tau$ by
$(B^\vee_{\u b})_{\u b \in M(n,r)}$. That is, we have $\tau(B_{\u a}
\cdot B^\vee_{\u b}) = \tau(B^\vee_{\u b} \cdot B_{\u a}) = \delta_{\u
a, \u b}$ for all $\u a, \u b \in M(n,r)$. Note that this immediately
implies that we can write every element $x \in K \Sa_q(n,r)$ in the
following form:
\begin{equation}
\label{theformula}
x = \sum_{\u a \in M(n,r)} \tau(x \cdot B^\vee_{\u a}) B_{\u a}
     = \sum_{\u a \in M(n,r)} \tau(x \cdot B_{\u a}) B^\vee_{\u a} 
\end{equation}
(just write $x$ as a linear combination of the $B_{\u a}$, multiply by
some $B_{\u b}$ and apply $\tau$).
\begin{remark}
\label{apresformule}
We have $f_{\u a, \u b, \u c} = \tau( \thu a \cdot \thu b \cdot
\thu{c}^\vee )$ for all $\u a, \u b, \u c \in M(n,r)$. Moreover, we
note that Formula~(\ref{theformula}) immediately gives us nice formulas
for the matrix representations coming from the left cell modules. For a
left cell $\Gamma$ and an element $h \in \Sa_q(n,r)$ the representing
matrix of $h$ on the left cell module $\LC^{(\Gamma)}$ with respect to
the basis $\{ \theta_{\u a} + \Sa_{<\Gamma} \mid \u a \in \Gamma \}$ is
$ \left(\tau( \theta_{\u b}^\vee \cdot h \cdot \theta_{\u a} ) \right)
_{\u b, \u a \in \Gamma}$ since $h \cdot \theta_{\u a} = \sum_{\u b \in
M(n,r)} \tau( \theta_{\u b}^\vee \cdot h \cdot \theta_{\u a} ) \cdot
\theta_{\u b}$ and it is enough to sum over those $\u b$ with $\u b
\leq_{\sL} \u a$.
\end{remark}

\begin{Lemma}[Characterisation of $\le_{\sL}$ and $\le_{\sR}$]
\label{charLR}
We have $\u a \le_{\sL} \u b$ if and only if $\thu b \thu a^\vee
\neq 0$ and $\u a \le_{\sR} \u b$ if and only if 
$\thu a^\vee \thu b \neq 0$.
\end{Lemma}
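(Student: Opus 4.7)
The plan is to translate both statements into statements about the trace form $\tau$ using Remark~\ref{apresformule}, and then apply non-degeneracy together with the trace property $\tau(xy) = \tau(yx)$.

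For the left cell case, recall that by definition $\u a \le_{\sL} \u b$ means there exists $\u c \in M(n,r)$ with $f_{\u c,\u b,\u a} \neq 0$. By Remark~\ref{apresformule},
\[ f_{\u c,\u b,\u a} = \tau(\thu c \cdot \thu b \cdot \thu a^\vee) = \tau(\thu b \cdot \thu a^\vee \cdot \thu c), \]
using that $\tau$ is a trace form. Thus $\u a \le_{\sL} \u b$ if and only if there exists $\u c \in M(n,r)$ with $\tau(\thu b \thu a^\vee \cdot \thu c) \neq 0$. First, if $\thu b \thu a^\vee = 0$ no such $\u c$ can exist. Conversely, if $\thu b \thu a^\vee \neq 0$, then by non-degeneracy of $\tau$ there is some $y \in K\Sa_q(n,r)$ with $\tau(\thu b \thu a^\vee \cdot y) \neq 0$. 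Expanding $y$ in the basis $\{\thu c\}_{\u c \in M(n,r)}$ and using $K$-linearity, at least one $\u c$ must satisfy $\tau(\thu b \thu a^\vee \cdot \thu c) \neq 0$, i.e.\ $f_{\u c,\u b,\u a} \neq 0$.

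For the right cell case the argument is parallel, but the key input is Lemma~\ref{rightcells}: $\u a \le_{\sR} \u b$ iff there exists $\u c$ with $f_{\u b,\u c,\u a} \neq 0$. By Remark~\ref{apresformule} again,
\[ f_{\u b,\u c,\u a} = \tau(\thu b \cdot \thu c \cdot \thu a^\vee) = \tau(\thu a^\vee \cdot \thu b \cdot \thu c), \]
and the same non-degeneracy argument shows this is equivalent to $\thu a^\vee \thu b \neq 0$.

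There is no serious obstacle: the work has already been done in Remark~\ref{apresformule} (which rewrites structure constants as values of $\tau$) and Lemma~\ref{rightcells} (which gives the symmetric characterisation of $\le_{\sR}$). The only subtlety is to remember to use the cyclicity of $\tau$ to move $\thu c$ (respectively $\thu a^\vee$) to the right so that non-degeneracy can be applied to the fixed element $\thu b \thu a^\vee$ (respectively $\thu a^\vee \thu b$).
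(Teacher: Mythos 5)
Your proof is correct and follows essentially the same route as the paper: rewrite $f_{\u c,\u b,\u a}$ as $\tau(\thu c\thu b\thu a^\vee)$, use the trace property and non-degeneracy of $\tau$, and invoke Lemma~\ref{rightcells} for the $\le_{\sR}$ case. The extra detail you give (expanding a general $y$ in the $\theta$-basis) is a harmless elaboration of the paper's appeal to non-degeneracy.
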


\begin{proof}
We only show the version with $\le_{\sL}$, the other is completely analogous 
thanks to Lemma~\ref{rightcells}.
If $\u a \le_{\sL} \u b$ there exists
a $\u c \in M(n,r)$ with $f_{\u c, \u b, \u a} = 
\tau( \thu c \thu b \thu a^\vee) \neq 0$ which implies 
$\thu b \thu a^\vee \neq 0$. If we assume the latter,
then by the non-degeneracy of $\tau$ there is some $\u c \in M(n,r)$
with $\tau(\thu c \thu b \thu a^\vee) \neq 0$ and
$\u a \le_{\sL} \u b$ follows.
\end{proof}

The other major ingredient is the fact that cell modules are simple, more
precisely:

\begin{Theo}[{Simple cell modules, see \cite{DuCanGL} or
\label{SimpleCell}
\cite[4.3]{DuAsymp}}]
Let $\Gamma$ be a left cell and recall $K = \Q(v)$. The extension of scalars
$K\LC^{(\Gamma)}$ of the left cell module
$\LC^{(\Gamma)}$ for a left cell\/ $\Gamma$ is a simple $K\Sa_q(n,r)$-module.
\end{Theo}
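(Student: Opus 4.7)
The plan is to follow the strategy of Lusztig's proof for Iwahori--Hecke algebras in \cite[Chapter~20]{Uneq}, adapted to the $q$-Schur setting via the properties \textbf{Q1}--\textbf{Q15} established above. Since $K\Sa_q(n,r)$ is semisimple, the task reduces to showing that the action of the algebra on $K\LC^{(\Gamma)}$ is transitive on the cell basis modulo the lower-cell filtration.

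First, I would construct the asymptotic algebra $\J(n,r)_K$ on the $K$-basis $\{t_{\u a}\}_{\u a \in M(n,r)}$ with product $t_{\u a} t_{\u b} = \sum_{\u c} \gamma_{\u a,\u b,\u c^t} t_{\u c}$. Property \textbf{Q15} (after passing to leading coefficients in the corresponding identity for the $\thu{a}$) provides associativity, while \textbf{Q3}, \textbf{Q5}, \textbf{Q6}, \textbf{Q7}, \textbf{Q8} and \textbf{Q13} combine to show that $\sum_{\u d \in \D(n,r)} t_{\u d}$ is a two-sided identity of $\J(n,r)_K$ and that $\J(n,r)_K$ decomposes as a direct product of subalgebras indexed by two-sided cells, each a matrix algebra whose rows and columns are indexed by the distinguished involutions of the block, with the $t_{\u d}$ playing the role of the diagonal matrix units.

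Next, I would use the Du--Lusztig homomorphism $\Phi_K : K\Sa_q(n,r) \to \J(n,r)_K$. Since $K\Sa_q(n,r)$ is semisimple of the same dimension $|M(n,r)|$ as $\J(n,r)_K$, one checks that $\Phi_K$ is an isomorphism of $K$-algebras. Under this isomorphism, the cell module $K\LC^{(\Gamma)}$ identifies with the left ideal $K\langle t_{\u a} \mid \u a \in \Gamma\rangle$, which is a single column of the matrix block corresponding to the two-sided cell containing $\Gamma$; as such a column, it is a simple module.

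The main obstacle is verifying the matrix-algebra structure on each two-sided cell block of $\J(n,r)_K$, i.e.\ computing the products $t_{\u d} t_{\u a}$ (for $\u d \in \D(n,r)$) and $t_{\u a^t} t_{\u a}$ precisely enough to exhibit matrix units. This requires a careful combination of \textbf{Q3}, \textbf{Q5}, \textbf{Q7}, \textbf{Q8}, \textbf{Q13}, mirroring the Hecke-algebra computations of \cite[Chapter~18]{Uneq}; once this is in place, simplicity of $K\LC^{(\Gamma)}$ is immediate, and this is essentially the argument given in \cite[Section~4]{DuAsymp}.
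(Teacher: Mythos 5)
The paper gives no argument for this statement at all: its ``proof'' is the citation of \cite{DuCanGL} and \cite[4.3]{DuAsymp}, and the route you sketch --- build $\J(n,r)_K$ from the $\gamma$'s, show each two-sided-cell block is a matrix ring with the $t_{\u d}$ as diagonal matrix units (using Proposition~\ref{LReq} so that left-cell/right-cell intersections are singletons), and transport $K\LC^{(\Gamma)}$ through $\Phi_K$ to a column of a block --- is essentially the argument of the cited reference, itself modelled on Lusztig. The strategy is therefore the right one and is compatible with the paper's logical order, since \textbf{Q1}--\textbf{Q15} are established before this theorem and Du obtains the isomorphism $\Phi_K$ independently of, and prior to, the simplicity of cell modules.

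There is, however, a genuine gap at the pivotal step. The justification offered for $\Phi_K$ being an isomorphism --- that $K\Sa_q(n,r)$ is semisimple and of the same dimension as $\J(n,r)_K$ --- proves nothing: a homomorphism from a semisimple algebra to an algebra of equal dimension can kill a whole block (project onto a proper sum of blocks and embed the image). Injectivity has to come from somewhere, and the standard source is a triangularity argument: ordering $M(n,r)$ compatibly with $\leq_{\sLR}$ and the $\a$-function, the matrix of $\Phi$ with respect to $(\thu a)$ and $(t_{\u a})$ is block-triangular with diagonal blocks invertible over $K$ (here \textbf{Q4}, \textbf{Q13} and $\gamma_{\u a^t,\u a,\u d}=1$ enter). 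Without this the reduction to $\J(n,r)_K$ collapses. A second, smaller omission: identifying $K\LC^{(\Gamma)}$ with the left ideal $\langle t_{\u a}\mid \u a\in\Gamma\rangle_K$ is not automatic from the mere existence of an isomorphism; one must check that the $\J(n,r)_K$-action pulled back along $\Phi$ reproduces the structure constants of the cell module, i.e.\ that $\Phi(\thu x)\cdot t_{\u a}=\sum_{\u c\in\Gamma} f_{\u x,\u a,\u c}\, t_{\u c}$ for $\u a\in\Gamma$ modulo the filtration. Both verifications are exactly the content of \cite[\S\S2--4]{DuAsymp}; if you quote them (as the paper quotes Theorem~\ref{DuLusHom}), your argument reduces to the paper's citation, and if you intend to prove them, they are the real work that your outline currently defers.
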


\begin{proof} See \cite{DuCanGL} or \cite[4.3]{DuAsymp}.
\end{proof}

\begin{remark}\label{coefinA}
This in particular implies that all simple $K\Sa_q(n,r)$-modules can be
realised over the ring $A$, since their corresponding representating
matrices involve only structure constants of $\Sa_q(n,r)$.
\end{remark}

We now directly obtain useful algebra elements by using the simple
cell modules:

\begin{Theo}[Basis of an isotypic component]
\label{isotypbasis}
Let $\Gamma$ be a left cell and $\chi$ the corresponding irreducible
character of the left cell module $\LC^{(\Gamma)}$, then the elements
\[ \left( c_\chi^{-1} \bas ab \right)_{\u a, \u b \in \Gamma} \]
are $K$-linearly independent and span the isotypic component of $K\Sa_q(n,r)$
belonging to the character $\chi$. Furthermore, we have the relations
\[ \left( c_\chi^{-1} \bas ab \right) \cdot
   \left( c_\chi^{-1} \bas{a'}{b'} \right)
 = \delta_{\u b, \u a'} \cdot 
   c_\chi^{-1} \bas a{b'} \]
for all $\u a, \u b, \u a', \u b' \in \Gamma$. That is, these elements
form a matrix unit for the isotypic component of $K\Sa_q(n,r)$ corresponding
to the simple module $K\LC^{(\Gamma)}$.
\end{Theo}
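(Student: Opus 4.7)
The strategy is to identify $c_\chi^{-1} \thu{a}\thu{b}^\vee$ with the matrix unit $E_{\u{a}, \u{b}}$ inside the isotypic component $A_\chi \cong \End_K(K\LC^{(\Gamma)})$. To do so, I would compute the action of $c_\chi^{-1} \thu{a}\thu{b}^\vee$ on the simple module $K\LC^{(\Gamma)}$ using the matrix coefficient formula of Remark~\ref{apresformule}, and separately show that this element annihilates all simple $K\Sa_q(n,r)$-modules lying in other isotypic blocks. Once this identification is established, the matrix-unit multiplication rule, linear independence, and the spanning of the isotypic component all follow from standard facts about matrix units together with the dimension count $|\Gamma|^2 = \dim_K \End_K(K\LC^{(\Gamma)}) = \dim_K A_\chi$.

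Concretely, I would begin by expanding
\[ \thu{a} \thu{b}^\vee = \sum_{\u{c} \in M(n,r)} \tau(\thu{a} \thu{b}^\vee \thu{c}) \thu{c}^\vee = \sum_{\u{c} \in M(n,r)} f_{\u{c}, \u{a}, \u{b}} \thu{c}^\vee \]
via formula~(\ref{theformula}), the cyclicity of $\tau$, and Remark~\ref{apresformule}. By Remark~\ref{apresformule} again, the $(\u{c}, \u{d})$-entry of the matrix of $\thu{a} \thu{b}^\vee$ acting on $K\LC^{(\Gamma)}$ is $\tau(\thu{c}^\vee \thu{a} \thu{b}^\vee \thu{d})$ for $\u{c}, \u{d} \in \Gamma$. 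The key claim is that this equals $c_\chi \delta_{\u{a}, \u{c}}\delta_{\u{b}, \u{d}}$, which exhibits $c_\chi^{-1} \thu{a} \thu{b}^\vee$ as the matrix unit $E_{\u{a}, \u{b}}$ on the simple module $K\LC^{(\Gamma)}$ (whose simplicity is guaranteed by Theorem~\ref{SimpleCell}). I would prove this Schur-orthogonality-style identity by writing $\tau = \sum_\psi \psi/c_\psi$ and analysing the contributions block by block, using that on the block $A_\chi$ the trace form reduces to $c_\chi^{-1}$ times the standard matrix trace pairing, with respect to which the dual of any matrix unit $E_{\u{c}, \u{d}}$ is $c_\chi E_{\u{d}, \u{c}}$.

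Having identified the action on $K\LC^{(\Gamma)}$ with a matrix unit, the multiplication relation $E_{\u{a}, \u{b}} E_{\u{a'}, \u{b'}} = \delta_{\u{b}, \u{a'}} E_{\u{a}, \u{b'}}$ yields the claimed algebra relation, and the $K$-linear independence of matrix units transfers immediately. Spanning then follows from the dimension count once we additionally verify that $c_\chi^{-1} \thu{a} \thu{b}^\vee$ actually lies in $A_\chi$, i.e., annihilates every simple module $V_\psi$ with $\psi \neq \chi$; this is by a parallel matrix-coefficient computation on $K\LC^{(\Gamma')}$ for left cells $\Gamma'$ in other two-sided cells. The main obstacle is the central Schur-orthogonality identity in the previous paragraph: while its shape is forced by the Wedderburn decomposition, actually verifying it requires carefully tracking how the dual basis $\{\thu{a}^\vee\}$ decomposes across all Wedderburn blocks simultaneously --- this is precisely the technical ingredient that the methods of \cite{klwedder} are designed to supply.
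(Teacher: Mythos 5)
Your proposal is correct and takes essentially the same route as the paper: your opening display, $\thu a \thu b^\vee = \sum_{\u c} f_{\u c,\u a,\u b}\,\thu c^\vee$, is exactly the paper's identification of $c_\chi^{-1}\bas ab$ with the standard matrix-unit element $c_\chi^{-1}\sum_{\u c}\tau(\thu b^\vee \thu c \thu a)\,\thu c^\vee$ attached to the simple module $K\LC^{(\Gamma)}$, read in the opposite direction via Formula~(\ref{theformula}). The Schur-orthogonality identity you single out as the technical core (together with the vanishing on the other isotypic blocks) is precisely the general fact about split semisimple symmetric algebras that the paper cites as \cite[7.2.7]{GP}, so it needs no input from \cite{klwedder}; your block-by-block sketch is the standard proof of that result.
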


\begin{proof}
By \cite[7.2.7]{GP} we get a matrix unit for the isotypic component
of $K\Sa_q(n,r)$ corresponding to the simple module $K \LC^{(\Gamma)}$
by the elements
\[ \frac{1}{c_\chi} \sum_{\u c \in M(n,r)}
   \tau( \thu b^\vee \cdot \thu c \cdot \thu a )
      \cdot \thu c^\vee
 = \frac{1}{c_\chi} \sum_{\u c \in M(n,r)}
   \tau( \thu c \cdot \thu a \thu b^\vee )
      \cdot \thu c^\vee \]
for $\u a, \u b \in \Gamma$. But this is equal to 
$c_\chi^{-1} \thu a \thu b^\vee$ by Formula~(\ref{theformula}).
\end{proof}

\begin{Cor}
Let $\Gamma$ be a left cell and $\chi$ the corresponding irreducible
character of the left cell module $\LC^{(\Gamma)}$. Then the element
\[ e_\Gamma := \frac{1}{c_\chi} \sum_{\u a \in \Gamma}
   \thu a \thu a^\vee \]
is the central primitive idempotent of $K \Sa_q(n,r)$ corresponding
to the irreducible character $\chi$.
\end{Cor}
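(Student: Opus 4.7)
The plan is to derive the corollary directly from Theorem~\ref{isotypbasis} together with standard Wedderburn theory for the semisimple algebra $K\Sa_q(n,r)$. Write $E_{\u a, \u b} := c_\chi^{-1} \thu a \thu b^\vee$ for $\u a, \u b \in \Gamma$; Theorem~\ref{isotypbasis} tells us that these elements span the isotypic component $I_\chi$ of $K\Sa_q(n,r)$ attached to $\chi$ and satisfy matrix unit relations. The observation driving the proof is that $e_\Gamma$ is exactly the sum $\sum_{\u a \in \Gamma} E_{\u a, \u a}$ of the diagonal matrix units, so it should play the role of the identity element of $I_\chi$.

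Concretely, I would first use the multiplication rule $E_{\u a, \u b} \cdot E_{\u a', \u b'} = \delta_{\u b, \u a'} E_{\u a, \u b'}$ from Theorem~\ref{isotypbasis} to verify by direct computation that $e_\Gamma \cdot E_{\u a', \u b'} = E_{\u a', \u b'} \cdot e_\Gamma = E_{\u a', \u b'}$ for every pair $\u a', \u b' \in \Gamma$. By $K$-linearity, $e_\Gamma$ then acts as two-sided identity on $I_\chi$; in particular $e_\Gamma^2 = e_\Gamma$ and $e_\Gamma \in I_\chi$.

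Finally, writing $K\Sa_q(n,r) = \bigoplus_\psi I_\psi$ as a direct sum of isotypic components (the Wedderburn blocks, which mutually annihilate each other in a semisimple algebra), $e_\Gamma$ lies in $I_\chi$ and hence annihilates every other $I_\psi$. Combined with the previous paragraph this shows that $e_\Gamma$ is central in $K\Sa_q(n,r)$, is idempotent, and is primitive as a central idempotent because the block $I_\chi$ it lies in is a simple $K$-algebra. That is exactly the claim. There is no genuine obstacle here: the corollary is an essentially immediate consequence of the matrix unit structure already established in Theorem~\ref{isotypbasis}, the only routine checks being the two multiplications $e_\Gamma \cdot E_{\u a', \u b'}$ and $E_{\u a', \u b'} \cdot e_\Gamma$.
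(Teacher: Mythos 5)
Your proposal is correct and follows essentially the same route as the paper: the paper likewise invokes Theorem~\ref{isotypbasis} to place $e_\Gamma$ in the isotypic component of $\chi$ and to identify it (as the sum of the diagonal matrix units) with the identity of that Wedderburn block, which is precisely the corresponding central primitive idempotent. Your explicit verification of the two multiplications $e_\Gamma \cdot E_{\u{a'},\u{b'}}$ and $E_{\u{a'},\u{b'}} \cdot e_\Gamma$ just spells out what the paper compresses into the phrase ``is mapped to the identity matrix in the corresponding matrix representation.''
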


\begin{proof}
By Theorem~\ref{isotypbasis} $e_\Gamma$ lies in the isotypic component
corresponding to the character $\chi$ and is mapped to the identity
matrix in the corresponding matrix representation.
\end{proof}

\begin{Lemma}[Isomorphism of left cell modules and two-sided cells]
\label{isoLCLR}
Let $\Gamma$ and $\Gamma'$ be left cells. If $K\LC^{(\Gamma)}$ and
$K\LC^{(\Gamma')}$ are isomorphic $K\Sa_q(n,r)$-modules 
then\/ $\Gamma$ and\/ $\Gamma'$ lie in the same two-sided cell.
\end{Lemma}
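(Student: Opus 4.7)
My plan is to exploit the fact that if $K\LC^{(\Gamma)} \cong K\LC^{(\Gamma')}$, then both cell modules afford the same irreducible character $\chi$, which forces the two central primitive idempotents produced by the preceding corollary to coincide.

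First I would let $\chi$ be the common irreducible character and observe that
\[ c_\chi^{-1} \sum_{\u a \in \Gamma} \thu a \thu a^\vee
 = e_\Gamma = e_{\Gamma'}
 = c_\chi^{-1} \sum_{\u{a'} \in \Gamma'} \thu{a'} \thu{a'}^\vee, \]
since central primitive idempotents in the semisimple algebra $K\Sa_q(n,r)$ are uniquely determined by the corresponding irreducible character.

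Next I would fix an arbitrary $\u{a'} \in \Gamma'$ and use that $e_{\Gamma'}$ acts as the identity on $K\LC^{(\Gamma')}$; this forces
\[ e_\Gamma \cdot \thu{a'} = e_{\Gamma'} \cdot \thu{a'} \equiv \thu{a'} \pmod{K\mathcal{S}_{<\Gamma'}} \]
to be non-zero. Expanding the left-hand side via the formula above for $e_\Gamma$, at least one summand $\thu a \bigl( \thu a^\vee \thu{a'} \bigr)$ with $\u a \in \Gamma$ must survive, so some $\thu a^\vee \thu{a'}$ is non-zero. Lemma~\ref{charLR} then yields $\u a \le_{\sR} \u{a'}$, and in particular $\u a \le_{\sLR} \u{a'}$.

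Finally I would apply the same argument with the roles of $\Gamma$ and $\Gamma'$ swapped to obtain a chain in the reverse direction, and then use that any left cell is contained in a single two-sided cell (since $\u x \sim_{\sL} \u y$ forces $\u x \le_{\sLR} \u y \le_{\sLR} \u x$) to chain these two inequalities into $\u a \sim_{\sLR} \u{a'}$ for arbitrary $\u a \in \Gamma$ and $\u{a'} \in \Gamma'$. The main technical point is the identification $e_\Gamma = e_{\Gamma'}$; once that is in place, the rest is a short unwinding of the definitions via Lemma~\ref{charLR}.
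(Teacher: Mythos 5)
Your proof is correct and follows essentially the same route as the paper: both arguments reduce to the coincidence (or non-vanishing product) of the central primitive idempotents $e_\Gamma$ and $e_{\Gamma'}$, extract from the resulting expansion a pair with $\thu a^\vee \thu b \neq 0$, apply Lemma~\ref{charLR} to get $\u a \le_{\sR} \u b$, and then run the symmetric argument to close the chain into a two-sided equivalence. Your variant of letting $e_\Gamma$ act on a single basis element $\thu{a'}$ instead of expanding the product $e_\Gamma e_{\Gamma'}$ is only a cosmetic difference.
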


\begin{proof}
Let $\chi$ be the irreducible character of the left cell module
$\LC^{(\Gamma)}$ and $\chi'$ that of $\LC^{(\Gamma')}$. The modules
$K\LC^{(\Gamma)}$ and $K\LC^{(\Gamma')}$ are isomorphic if and only if
$e_\Gamma \cdot e_{\Gamma'} = e_{\Gamma'} \cdot e_\Gamma \neq 0$ 
(and in this case $e_\Gamma = e_{\Gamma'}$).
Now assume this case. Then
\[ 0 \neq \frac{1}{c_\chi^2} \sum_{\u a \in \Gamma} \sum_{\u b \in \Gamma'}
   \thu a \thu a^\vee \thu b \thu b^\vee 
   = \frac{1}{c_\chi^2} \sum_{\u a \in \Gamma} \sum_{\u b \in \Gamma'}
   \thu b \thu b^\vee \thu a \thu a^\vee 
\]
and thus there is at least one pair $(\u a, \u b) \in \Gamma \times \Gamma'$
such that $\thu a^\vee \thu b \neq 0$. By Lemma~\ref{charLR}
this implies $\u a \le_{\sR} \u b$. Since $e_\Gamma$ and $e_{\Gamma'}$
commute, the same argument shows $\u b' \le_{\sR} \u a'$ for some
$\u a' \in \Gamma$ and $\u b' \in \Gamma'$. Thus, $\Gamma$ and
$\Gamma'$ lie in the same two-sided cell in that case.
\end{proof}

For what follows we need the following statement about
Iwahori-Hecke-Algebras of type A:

\begin{Theo}[Equal cell modules in the Iwahori-Hecke algebra]
\label{eqLCHa}
Let $\Ha$ be a generic Iwahori-Hecke-Algebra of type A as in 
Section~\ref{sec:qschur}. If $x \sim_{\sL} y$ and $z \sim_{\sL} w$ and
$x \sim_{\sR} z$ and $y \sim_{\sR} w$, then $C_x D_{y^{-1}} = C_z D_{w^{-1}}$.
In particular, we have
\[ g_{u,x,y} = \tau( C_u C_x D_{y^{-1}} ) = \tau( C_u C_z D_{w^{-1}})
  =g_{u,z,w} \]
for all $u \in W$.
\end{Theo}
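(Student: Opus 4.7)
My plan is to reduce the asserted equality of algebra elements to an equality of structure constants, and then to establish this via a matrix-unit argument that hinges on the rigidity of Kazhdan-Lusztig cells in type A.

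First I will apply Formula~(\ref{theformula}) inside $K\Ha$ with respect to the basis $(C_u)_{u\in W}$ and its $\tau$-dual basis $(D_{u^{-1}})_{u\in W}$: for any $h \in K\Ha$, $h = \sum_u \tau(h C_u)\,D_{u^{-1}}$. Taking $h = C_xD_{y^{-1}}$ and combining cyclicity of $\tau$, the expansion $C_uC_x = \sum_v g_{u,x,v}C_v$, and $\tau(C_vD_{y^{-1}}) = \delta_{v,y}$, gives
\[ C_xD_{y^{-1}} = \sum_{u \in W} g_{u,x,y}\,D_{u^{-1}}, \]
and symmetrically for $C_zD_{w^{-1}}$. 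This simultaneously establishes the ``in particular'' clause of the theorem and reduces the main assertion to the identity $g_{u,x,y} = g_{u,z,w}$ for every $u \in W$.

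Next I will invoke the matrix-unit structure. The proof of Theorem~\ref{isotypbasis} applies verbatim to $\Ha$, since $K\Ha$ is a semisimple symmetric algebra (with symmetrising form $\tau$) and its left cell modules are simple over $K$ in type A by~\cite{KL79}. Hence for any left cell $\Gamma$ affording character $\chi$, the family $(c_\chi^{-1}C_aD_{b^{-1}})_{a,b\in\Gamma}$ is a complete matrix unit for the $\chi$-isotypic component; in particular, $c_\chi^{-1}C_xD_{y^{-1}}$ acts on $K\LC^{(\Gamma)}$ as the rank-one operator sending $C_y + \SSS_{<\Gamma}$ to $C_x + \SSS_{<\Gamma}$ and vanishing on the remaining basis vectors. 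The hypothesis $x\sim_{\sR}z$ places $x$ and $z$ in a common two-sided cell $E$, and in type A all left cells of $E$ afford the same irreducible character (the Specht module of the common RSK shape), so both $c_\chi^{-1}C_xD_{y^{-1}}$ and $c_\chi^{-1}C_zD_{w^{-1}}$ lie in the same isotypic component. Moreover, $|\Gamma\cap\Delta| = 1$ for every left cell $\Gamma$ and right cell $\Delta$ in $E$, so right cells canonically index a basis of every left cell module of $E$ uniformly across all left cells. Under the resulting identification of modules, $C_x + \SSS_{<\Gamma}$ corresponds to $C_z + \SSS_{<\Gamma'}$ (as $x\sim_{\sR}z$) and $C_y + \SSS_{<\Gamma}$ to $C_w + \SSS_{<\Gamma'}$ (as $y\sim_{\sR}w$), so the two matrix units occupy the same position in the $\chi$-isotypic component and are therefore equal.

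The main obstacle is making this last identification fully rigorous, i.e.\ arguing that the right-cell indexing of left cell module bases is realised by an actual module isomorphism. The cleanest route is via Lusztig's asymptotic algebra $J$: its basis $\{t_w\}_{w\in W}$ is itself a matrix unit basis on each two-sided-cell block of $J$, with rows and columns indexed by right and left cells. Tracing the Lusztig isomorphism $K\Ha \cong KJ$ then shows that $c_\chi^{-1}C_xD_{y^{-1}}$ corresponds to a matrix unit of $J$ depending only on the pair of right cells $(\Delta_x,\Delta_y)$, which delivers the required equality.
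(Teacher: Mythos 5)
Your opening reduction is correct and matches the paper's: expanding $C_xD_{y^{-1}}$ in the $\tau$-dual basis via Formula~(\ref{theformula}) shows that the element equality $C_xD_{y^{-1}}=C_zD_{w^{-1}}$ is equivalent to $g_{u,x,y}=g_{u,z,w}$ for all $u\in W$, and this also yields the ``in particular'' clause. The problem is that nothing after that point actually establishes this equality of structure constants. Your matrix-unit argument shows that $c_\chi^{-1}C_xD_{y^{-1}}$ and $c_\chi^{-1}C_zD_{w^{-1}}$ are both matrix units of the same isotypic component (granting that the two left cell modules afford the same character), but ``occupying the same position'' only makes sense after one has fixed an identification of the two left cell modules that matches the Kazhdan--Lusztig bases $\{C_a \mid a\sim_{\sL}x\}$ and $\{C_{a'} \mid a'\sim_{\sL}z\}$ along the right-cell bijection, up to a common scalar. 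The existence of such an identification is exactly the assertion that the representing matrices in the two bases coincide, i.e.\ that $g_{u,a,b}=g_{u,a',b'}$ --- the very statement to be proved. You acknowledge this gap, but the proposed repair via the asymptotic algebra is only a sketch: it presupposes (i) that each two-sided-cell block of $J$ is a matrix ring with the $t_w$ as matrix units indexed by pairs of a left and a right cell, and (ii) a formula for the image of $C_xD_{y^{-1}}$ (with $y$ arbitrary, not a distinguished involution) under the Lusztig homomorphism. Neither is proved; (ii) is essentially the Hecke-algebra analogue of Theorem~\ref{preimages}, and the standard proofs of (i) in type A are themselves derived from the $W$-graph isomorphism of \cite{KL79} --- so the route is at best incomplete and at worst circular.

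The paper's proof goes straight to the source: in the proof of Theorem~1.4 of \cite{KL79} it is shown that the bijection between two left cells in the same two-sided cell obtained by matching right cells is an isomorphism of $W$-graphs. This is strictly stronger than an isomorphism of modules: it says the representing matrices in the canonical bases are literally equal, which is precisely $g_{u,x,y}=g_{u,z,w}$. That combinatorial input (star operations / Robinson--Schensted) is the real content of the theorem and is the piece your argument never supplies.
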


\begin{proof}
This statement is already implicitly stated in \cite{KL79}. Namely,
it is shown there in the proof of Theorem 1.4 that the two left cell
modules defined by the left cell containing $x,y$ and the one containing
$z,w$ are isomorphic since all four lie in the same two-sided. The
exact statement there is that two $W$-graphs are isomorphic, which means
in particular that not only the two left cell modules are isomorphic,
but that even the matrix representations with respect to the bases
$\{ C_v \mid v \sim_{\sL} x\}$ and $\{ C_w \mid w \sim_{\sL} z \}$
are equal. But this exactly means, that
\[ \tau(D_{y^{-1}} C_u C_x) = \tau(D_{w^{-1}} C_u C_z) \]
for all $u \in W$ which we claim.
\end{proof}

Now we begin to use statements \textbf{Q1} to \textbf{Q14}:

\begin{Theo}[Equality of different left cell modules]
\label{eqLCmod}
Let $\Gamma, \Gamma'$ be left cells such that $K \LC^{(\Gamma)}$
and $K \LC^{(\Gamma')}$ are isomorphic $K\Sa_q(n,r)$-modules. Let
$\u d$ be the unique element in $\Gamma' \cap \D(n,r)$ (use \textbf{Q13})
and $\u c \sim_{\sL} \u d$ that is $\u c \in \Gamma'$. Then there are unique
$\u a, \u b \in \Gamma$ with $\u a \sim_{\sR} \u c$ and $\u b \sim_{\sR} \u d$
and we have 
$\thu a \thu b^\vee = \thu c \thu d^\vee$.
\end{Theo}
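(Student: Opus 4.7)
The plan is to split the statement into (i) the existence and uniqueness of $\u a, \u b \in \Gamma$ and (ii) the equality of the two products.

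For (i), uniqueness is immediate from Proposition~\ref{LReq}: if $\u a, \u a' \in \Gamma$ both satisfy $\u a \sim_{\sR} \u c$, then $\u a \sim_{\sL} \u a'$ (both in $\Gamma$) and $\u a \sim_{\sR} \u a'$ (through $\u c$), forcing $\u a = \u a'$, and similarly for $\u b$. For existence, I would reduce to the analogous statement in $W$ via $\sigma$: by Lemma~\ref{isoLCLR} the cells $\Gamma$ and $\Gamma'$ lie in a common two-sided cell, hence $\sigma(\Gamma)$ and $\sigma(\Gamma')$ sit in the same two-sided cell of $W$, and the Robinson--Schensted correspondence in type A produces unique $x,y \in \sigma(\Gamma)$ with $x \sim_{\sR} \sigma(\u c)$ and $y \sim_{\sR} \sigma(\u d)$. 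A descent-set analysis (left descents constant on right cells and right descents constant on left cells) gives $x \in D^+_{ro(\u c), co(\Gamma)}$ and $y \in D^+_{ro(\u d), co(\Gamma)}$, so I would set $\u a := (ro(\u c), \ell_{ro(\u c),co(\Gamma)}^{-1}(x), co(\Gamma))$ and $\u b := (ro(\u d), \ell_{ro(\u d),co(\Gamma)}^{-1}(y), co(\Gamma))$; these are well-defined elements of $\Gamma$ satisfying the required right-cell relations.

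For (ii), Formula~(\ref{theformula}) applied to the basis $\{\thu e\}$, together with the trace property of $\tau$ and Remark~\ref{apresformule}, yields
\[ \thu x \thu y^\vee = \sum_{\u e \in M(n,r)} f_{\u e,\u x,\u y}\,\thu e^\vee \]
for all $\u x, \u y \in M(n,r)$. Applying this to $(\u c,\u d)$ and $(\u a,\u b)$, Lemma~\ref{scqschur} forces both expansions to have the same support — namely those $\u e$ with $co(\u e) = ro(\u c) = ro(\u a)$ and $ro(\u e) = ro(\u d) = ro(\u b)$, using Lemma~\ref{roco} applied to the $\sim_{\sR}$-relations — and matching scalar prefactors $h_{ro(\u c)}^{-1} = h_{ro(\u a)}^{-1}$. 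What remains is the identity $g_{\sigma(\u e),\sigma(\u c),\sigma(\u d)} = g_{\sigma(\u e),\sigma(\u a),\sigma(\u b)}$, which is precisely Theorem~\ref{eqLCHa}: its four hypotheses are supplied by Lemma~\ref{sigmale} applied to $\u a \sim_{\sL} \u b$, $\u c \sim_{\sL} \u d$, $\u a \sim_{\sR} \u c$ and $\u b \sim_{\sR} \u d$.

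The main obstacle is the existence claim in (i): the reduction to $W$ via $\sigma$ is straightforward, but lifting the Robinson--Schensted element back to $M(n,r)$ requires careful bookkeeping with the compositions $ro, co$ and the double-coset data $D^+_{\nu,\mu}$, in particular verifying that the lifted triple genuinely lies in $\Gamma$ rather than merely having $\sigma$-image in the correct left cell of $W$. Once existence is secured, part (ii) is a transparent computation combining Formula~(\ref{theformula}), Lemma~\ref{scqschur} and Theorem~\ref{eqLCHa}.
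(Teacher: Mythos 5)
Your uniqueness argument in (i) and all of part (ii) coincide with the paper's proof: uniqueness via Proposition~\ref{LReq}, and the equality $\thu a\thu b^\vee=\thu c\thu d^\vee$ by reducing (through the support and prefactor analysis of Lemma~\ref{scqschur} and Lemma~\ref{roco}) to $g_{\sigma(\u e),\sigma(\u a),\sigma(\u b)}=g_{\sigma(\u e),\sigma(\u c),\sigma(\u d)}$, which Theorem~\ref{eqLCHa} and Lemma~\ref{sigmale} provide. Those steps are sound.

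The genuine gap is the existence claim in (i), precisely the point you defer to ``careful bookkeeping''. Robinson--Schensted produces an $x$ in the left cell of $W$ containing $\sigma(\Gamma)$ with $x\sim_{\sR}\sigma(\u c)$, and your descent-set argument does show that $x$ is the longest element of its $W_{ro(\u c)}$--$W_{co(\u b)}$ double coset, so a triple $\u a\in M(n,r)$ with $\sigma(\u a)=x$ exists. But Lemma~\ref{sigmale} is a one-way implication: $\sigma(\Gamma)$ is only known to be \emph{contained in} a left cell of $W$, and neither the paper nor your sketch provides a converse that would let you descend from $\sigma(\u a)\sim_{\sL}\sigma(\u b)$ to $\u a\sim_{\sL}\u b$, i.e.\ to $\u a\in\Gamma$, or from $\sigma(\u a)\sim_{\sR}\sigma(\u c)$ to $\u a\sim_{\sR}\u c$. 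Without these, part (ii) also loses its footing, since you need the $M(n,r)$-level relations $\u a\sim_{\sR}\u c$ and $\u b\sim_{\sR}\u d$ to apply Lemma~\ref{roco} in the support computation. The paper closes the existence step by a representation-theoretic argument that never leaves $M(n,r)$: since $\u c\sim_{\sL}\u d$, Theorem~\ref{isotypbasis} gives $\bas dc\bas cd=c_\chi\bas dd\neq 0$, so $\bas cd$ acts non-trivially on $\LC^{(\Gamma')}$ and hence on the isomorphic module $\LC^{(\Gamma)}$; this produces a pair $(\u a,\u b)\in\Gamma\times\Gamma$ with $\tau(\bas ba\cdot\bas cd)\neq 0$, whence $\thu a^\vee\thu c\neq 0$ and $\thu d^\vee\thu b\neq 0$, so $\u a$, $\u c$ and $\u b$, $\u d$ are $\le_{\sR}$-comparable by Lemma~\ref{charLR}, and Lemma~\ref{isoLCLR} together with \textbf{Q4} and \textbf{Q10} upgrades comparability to $\sim_{\sR}$. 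You should replace the Robinson--Schensted construction by this argument (or else prove the missing converse to Lemma~\ref{sigmale}, which is not available here).
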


\begin{proof}
Let $\chi$ be the irreducible character of the left cell module
$\LC^{\Gamma'}$. We denote by $c_{\chi}$ the corresponding Schur
element. Since $\u c\sim_{\sL}\u d$, it follows from
Theorem~\ref{isotypbasis} that \[ \bas dc \bas cd =c_{\chi}\bas dd.
\] 
Therefore we have $\tau( \theta_{\u c}^\vee \bas cd \theta_{\u d}
) \neq 0$ and hence $\bas cd$ acts non-trivially on the module
$\LC^{(\Gamma')}$ (see Remark~\ref{apresformule}) and
thus also on the isomorphic module $\LC^{(\Gamma)}$.

This means that there is at least one pair 
$(\u a, \u b) \in \Gamma \times \Gamma$ such that
\[ \tau (\bas ba \cdot \bas cd) 
  = \tau( \theta_{\u a}^\vee \cdot \bas cd \cdot \theta_{\u b} )
  = \tau( \bas cd \cdot \bas ba ) \neq 0. \]
But then in particular $\thu a^\vee \thu c \neq 0$ and
thus $a \le_{\sR} c$ by Lemma~\ref{charLR}. 
Since $\Gamma$ and $\Gamma'$ lie in the same two-sided
cell by Lemma~\ref{isoLCLR}, we conclude $\u a \sim_{\sLR} \u c$ and thus by
\textbf{Q4} and \textbf{Q10} $\u a \sim_{\sR} \u c$. Analogously, we show
$\u b \sim_{\sR} \u d$. By Proposition~\ref{LReq} we conclude
that there is only one such pair $(\u a, \u b)$ since both are uniquely
defined by their membership in a left and a right cell.

We now show that $f_{\u e, \u a, \u b} = f_{\u e, \u c, \u d}$ for all
$\u e \in M(n,r)$ and thus $\bas ab = \bas cd$.
We have $co(\u a) = co(\u b)$ and 
$co(\u c) = co(\u d) = ro(\u d) = ro(\u b)$ and $ro(\u a) = ro(\u c)$
by Lemma~\ref{roco} and the fact that $\u d \in \D(n,r)$. Thus, if
$ro(\u e) \neq ro(\u b)$ or $co(\u e) \neq ro(\u a)$ then both
sides are zero by Lemma~\ref{scqschur}. Otherwise, we have
\[ f_{\u e, \u a, \u b} = h_{co(\u e)}^{-1} \cdot 
   g_{\sigma(\u e), \sigma(\u a), \sigma(\u b)} \qquad \mbox{and} \qquad
   f_{\u e, \u c, \u d} = h_{co(\u e)}^{-1} \cdot 
   g_{\sigma(\u e), \sigma(\u c), \sigma(\u d)} \]
and thus the equality $f_{\u e, \u a, \u b} = f_{\u e, \u c, \u d}$ follows
from \[
  \sigma(\u a) \sim_{\sL} \sigma(\u b) \sim_{\sR} \sigma(\u d) \sim_{\sL} \sigma(\u c)
 \sim_{\sR} \sigma(\u a)\] 
using Lemma~\ref{sigmale} and Theorem~\ref{eqLCHa}.
The non-degeneracy of $\tau$ now immediately implies $\bas ab = \bas cd$.
\end{proof}

With this we get the following result, for which we first need
one more piece of notation:

\begin{Def}[Schur elements of characters of left cell modules]
\label{def:schur}
Let $\u d \in \D(n,r)$ and $\Gamma$ the unique left cell with 
$\u d \in \Gamma$ (remember \textbf{Q13}). We denote the left cell
module $\LC^{(\Gamma)}$ by $\LC^{(\u d)}$ and the Schur
element corresponding to the irreducible character of\/ $\LC^{(\u d)}$
by $c_{\u d}$.
\end{Def}

\begin{Theo}[Wedderburn basis]
\label{wedderburn}
Let $\tau$ be an arbitrary non-degenerate symmetrising trace form on
$K\Sa_q(n,r)$.
The set
\[ \B := 
   \{ \cbas cd \mid \u c \in M(n,r), \u d \in \D(n,r), \u c \sim_{\sL} \u d \} \]
is a Wedderburn basis of $K \Sa_q(n,r)$. Two elements $\cbas cd$ and
$\cbas {c'}{d'}$ lie in the same isotypic component if and only if\/
$\LC^{(\u d)} \cong \LC^{(\u{d'})}$.

For $\cbas cd, \cbas{c'}{d'} \in \B$
we have the following equation:
\[ \cbas cd \cdot \cbas{c'}{d'} =
   \left\{ \begin{array}{ll}
     0 & \mbox{if } \LC^{(\u d)} \not\cong \LC^{(\u{d'})} \\
     0 & \mbox{if } \LC^{(\u d)} \cong \LC^{(\u{d'})} \mbox{ and }
                    \u d \not\sim_{\sR} \u{c'} \\
     \cbas{c''}{d'} & \mbox{if } \LC^{(\u d)} \cong \LC^{(\u{d'})} \mbox{ and }
                      \u d \sim_{\sR} \u{c'}
   \end{array}\right. \]
Here, $\u{c''}$ in the last case is the unique element with 
$\u{c''} \sim_{\sL} \u{d'}$ and $\u{c''} \sim_{\sR} \u c$ and the statement
contains the information that such a $\u{c''}$ in fact exists.
\end{Theo}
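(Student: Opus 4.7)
The plan is to establish that $\B$ is a Wedderburn basis by identifying it, isotypic component by isotypic component, with the matrix unit supplied by Theorem~\ref{isotypbasis}. Fix an irreducible character $\chi$ of $K\Sa_q(n,r)$ and pick one left cell $\Gamma_0$ with $\chi_{\Gamma_0}=\chi$. Theorem~\ref{isotypbasis} gives that $\{c_\chi^{-1}\thu a\thu b^\vee\mid \u a,\u b\in\Gamma_0\}$ is a matrix unit of the $\chi$-isotypic component. The elements of $\B$ that lie in this component are precisely those $\cbas cd$ for which $\u d$ is the distinguished element (by \textbf{Q13}) of some left cell $\Gamma$ with $\chi_\Gamma=\chi$ and $\u c\in\Gamma$; note that $c_{\u d}=c_\chi$ throughout. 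For each such pair Theorem~\ref{eqLCmod} (used trivially when $\Gamma=\Gamma_0$) yields unique $\u a,\u b\in\Gamma_0$ with $\u a\sim_{\sR}\u c$, $\u b\sim_{\sR}\u d$ and $\thu c\thu d^\vee=\thu a\thu b^\vee$, defining a map $\Psi$ from the $\B$-indices in the $\chi$-isotypic component into $\Gamma_0\times\Gamma_0$.

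Next I would show $\Psi$ is a bijection. Injectivity rests on two facts: first, combining \textbf{Q13} with \textbf{Q6} and the definition of $\sim_{\sR}$ via transposition, every right cell contains a unique distinguished element, so $\u b$ determines $\u d$ as the distinguished element in the right cell of $\u b$; second, Proposition~\ref{LReq} ensures that within any right cell there is at most one element of a given left cell, so once $\u d$ (hence $\Gamma_{\u d}$) and $\u a$ are fixed, $\u c=\Gamma_{\u d}\cap(\text{right cell of }\u a)$ is determined. Surjectivity follows from a dimension count: $|\B|=\sum_{\u d\in\D(n,r)}|\Gamma_{\u d}|=\sum_\Gamma|\Gamma|=|M(n,r)|=\dim K\Sa_q(n,r)=\sum_\chi(\dim V_\chi)^2$, while injectivity of $\Psi$ gives $|\B\cap(\chi\text{-isotypic})|\le(\dim V_\chi)^2$ for each $\chi$; summing over $\chi$ forces equality for every $\chi$. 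Thus $\B\cap(\chi\text{-isotypic})$ coincides with the matrix unit from Theorem~\ref{isotypbasis} for $\Gamma_0$, and $\B$ is a Wedderburn basis. The characterization of when two elements lie in the same isotypic component is now immediate.

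For the multiplication rules I translate both factors to matrix-unit form via $\Psi$ and apply the matrix-unit relations from Theorem~\ref{isotypbasis}. When $\LC^{(\u d)}\not\cong\LC^{(\u{d'})}$ the factors sit in different isotypic components and the product is zero; otherwise $\cbas cd=c_\chi^{-1}\thu a\thu b^\vee$ and $\cbas{c'}{d'}=c_\chi^{-1}\thu{a'}\thu{b'}^\vee$ multiply to $\delta_{\u b,\u{a'}}c_\chi^{-1}\thu a\thu{b'}^\vee$. The equality $\u b=\u{a'}$ translates, via the description of $\Psi^{-1}$ above, to the right cells of $\u d$ and $\u{c'}$ coinciding, i.e.\ $\u d\sim_{\sR}\u{c'}$; when this fails, the product vanishes. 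When it holds, I invert $\Psi$ to identify $c_\chi^{-1}\thu a\thu{b'}^\vee$ with $\cbas{c''}{d'}$, where $\u{c''}$ is the unique element of the left cell of $\u{d'}$ lying in the right cell of $\u a$; since $\u a\sim_{\sR}\u c$, this $\u{c''}$ satisfies $\u{c''}\sim_{\sL}\u{d'}$ and $\u{c''}\sim_{\sR}\u c$ as claimed, and the existence of $\u{c''}$ is part of the surjectivity of $\Psi$.

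The main obstacle I expect is establishing the bijectivity of $\Psi$: Theorem~\ref{eqLCmod} supplies only one direction of the identification, and surjectivity requires the dimension-counting identity above combined with the structural facts that each right cell contains a unique distinguished element and each left/right cell intersection is a singleton. Once bijectivity is in hand, the rest of the proof is a clean translation between the $\B$-indexing and the matrix-unit indexing from Theorem~\ref{isotypbasis}.
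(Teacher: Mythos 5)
Your argument is correct and runs along the same essential lines as the paper's: both proofs reduce everything to the matrix units of Theorem~\ref{isotypbasis} by means of the identification $\bas cd = \bas ab$ supplied by Theorem~\ref{eqLCmod}, and both use Proposition~\ref{LReq} (together with \textbf{Q13} and \textbf{Q6}) to pin down the relevant elements uniquely, so the derivation of the three cases of the product formula is essentially identical. The one place where you genuinely diverge is the existence of $\u{c''}$, i.e.\ the surjectivity of your map $\Psi$: you obtain it from the global count $|M(n,r)| = \sum_\chi (\dim V_\chi)^2$ combined with the injectivity bound on each isotypic component, whereas the paper argues locally, showing that $\{\bas{c''}{d'} \mid \u{c''} \sim_{\sL} \u{d'}\}$ sits inside the single column $\{\bas{a''}{b'} \mid \u{a''} \in \Gamma\}$ and that both sets have cardinality $|\Gamma|$ because isomorphic cell modules have equal dimension. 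Your global count costs you the split-semisimplicity identity for $\dim K\Sa_q(n,r)$ as an input, but it buys an explicit verification that $\B$ is in fact a basis (and, as a by-product, that every irreducible character arises from a left cell) --- a point the paper's proof leaves implicit; the paper's local argument is leaner and stays entirely within one isotypic component. One small point to make explicit in your write-up: for the counting step you should note that distinct index pairs $(\u c,\u d)$ give distinct elements of $\B$, which follows from the injectivity of $\Psi$ together with the linear independence of the matrix unit in Theorem~\ref{isotypbasis}.
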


\begin{proof}
By Theorem~\ref{isotypbasis} the elements $\cbas cd$ and $\cbas{c'}{d'}$ 
both lie in an isotypic component. Thus,
if $\LC^{(\u d)} \not\cong \LC^{(\u{d'})}$ then clearly their product
is zero. 

Now assume that the left cell modules are isomorphic.
Let $\Gamma$ be an arbitrary left cell, such that $K\LC^{(\Gamma)}$ is
isomorphic to $K\LC^{(\u d)}$ and $K\LC^{(\u{d'})}$ and denote
the corresponding irreducible character by $\chi$. By Theorem~\ref{eqLCmod}
there are unique $\u a, \u b, \u{a'}, \u{b'} \in \Gamma$ with
\[ \u a \sim_{\sR} \u c \quad \mbox{and} \quad
   \u b \sim_{\sR} \u d \quad \mbox{and} \quad
   \u{a'} \sim_{\sR} \u{c'} \quad \mbox{and} \quad
   \u{b'} \sim_{\sR} \u{d'} \]
and we have $\bas ab = \bas cd$ and $\bas{a'}{b'} = \bas{c'}{d'}$.
Thus, Theorem~\ref{isotypbasis} implies that the product in the theorem
is $0$ if $\u b \neq \u{a'}$ and equal to $c_\chi^{-1} \bas a{b'}$
otherwise. We remark that if $\u d \sim_{\sR} \u{c'}$, then $\u{a'}
\sim_{\sR} \u{b}$ by transitivity. But using Proposition~\ref{LReq},
$\u{a'},\,\u{b}\in \Gamma$ implies $\u b = \u{a'}$. Hence $\u b =
\u{a'}$ if and only if $\u d \sim_{\sR} \u{c'}$ which proves case two in
the equation.

Finally, we assume also $\u d \sim_{\sR} \u{c'}$. Then, as $\u{c''}$ runs
through the left cell that contains $\u{d'}$, we can apply 
Theorem~\ref{eqLCmod} to each $\bas{c''}{d'}$ and the left cell $\Gamma$.
Since $\u b' \in \Gamma$ and $\u{b'} \sim_{\sR} \u{d'}$ we get that
\[ \{ \bas{c''}{d'} \mid \u{c''} \sim_{\sL} \u{d'} \}
 = \{ \bas{a''}{b'} \mid \u{a''} \in \Gamma \} \]
and both sets have cardinality $|\Gamma|$. Thus, there is a unique
$\u{c''}$ with $\bas{c''}{d'} = \bas a{b'}$ characterised by
$\u a \sim_{\sR} \u{c''} \sim_{\sL} \u{d'}$ and the theorem is proved.
\end{proof}

\begin{Cor}[Idempotents]
The elements $\cbas dd$ with $\u d \in \D(n,r)$ are pairwise orthogonal
primitive idempotents whose sum is the identity $1 \in \Sa_q(n,r)$. The central
primitive idempotent corresponding to an irreducible character $\chi$
of $K\Sa_q(n,r)$ is equal to
\[ \sum_{\renewcommand{\arraystretch}{0.5}
         \begin{array}{c} \scriptstyle \u d \in \D(n,r) \\
                          \scriptstyle \LC^{(\u d)} \mathrm{has~character~}\chi
         \end{array}} \cbas dd \]
\end{Cor}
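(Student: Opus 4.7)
The plan is to deduce everything directly from the multiplication rule in Theorem~\ref{wedderburn}, together with a careful bookkeeping of how the $\u d \in \D(n,r)$ sit inside the Wedderburn basis.

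First, for pairwise orthogonality and idempotency, I compute $(c_{\u d}^{-1}\thu d\thu d^\vee)(c_{\u{d'}}^{-1}\thu{d'}\thu{d'}^\vee)$ via the three-case formula of Theorem~\ref{wedderburn}. If $\LC^{(\u d)}\not\cong\LC^{(\u{d'})}$ the product is $0$ by definition. If the left cell modules are isomorphic but $\u d\not\sim_{\sR}\u{d'}$, again $0$. In the remaining case, $\u d\sim_{\sR}\u{d'}$ combined with \textbf{Q6} ($\u d=\u d^t$, $\u{d'}=\u{d'}^t$) gives $\u d\sim_{\sL}\u{d'}$, so \textbf{Q13} forces $\u d=\u{d'}$. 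Then the unique $\u{c''}$ in the multiplication rule satisfies $\u{c''}\sim_{\sL}\u d$ and $\u{c''}\sim_{\sR}\u d$, hence $\u{c''}=\u d$ by Proposition~\ref{LReq}; the product is exactly $c_{\u d}^{-1}\thu d\thu d^\vee$. This proves orthogonality and idempotency in one stroke.

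Primitivity follows from Theorem~\ref{isotypbasis}: inside the isotypic component of $\chi=\chi_{\u d}$, the element $c_{\u d}^{-1}\thu d\thu d^\vee$ is one of the diagonal elements of the matrix unit, hence a primitive idempotent of the corresponding matrix algebra and therefore of $K\Sa_q(n,r)$.

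For the identity statement and the description of the central primitive idempotents, I fix an irreducible character $\chi$ and work inside its isotypic component. Observing that $\u a\sim_{\sL}\u b$ implies $\u a^t\sim_{\sR}\u b^t$, transposition takes left cells to right cells bijectively within a given two-sided cell, and \textbf{Q6} together with \textbf{Q13} shows that each right cell in this two-sided cell contains a unique element of $\D(n,r)$ (namely the distinguished element of the corresponding left cell). Hence if $\Gamma_1,\dots,\Gamma_k$ are the left cells whose cell module affords $\chi$, with distinguished elements $\u d_1,\dots,\u d_k$, and $R_1,\dots,R_k$ the corresponding right cells, then $\u d_j\in\Gamma_j\cap R_j$ and by Proposition~\ref{LReq} this intersection equals $\{\u d_j\}$. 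Under the matrix-unit identification of Theorem~\ref{wedderburn}, where the row/column indices of $c_{\u d_j}^{-1}\thu c\thu{d_j}^\vee$ are the right cells of $\u c$ and $\u d_j$ respectively, the diagonal matrix units are therefore precisely $c_{\u d_j}^{-1}\thu{d_j}\thu{d_j}^\vee$ for $j=1,\dots,k$. Their sum is the identity of the isotypic matrix algebra, which is the central primitive idempotent $e_\chi$. Summing over all $\chi$ gives $\sum_{\u d\in\D(n,r)} c_{\u d}^{-1}\thu d\thu d^\vee=\sum_\chi e_\chi=1$.

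The main obstacle is the bookkeeping in the last paragraph: one must identify the diagonal of the Wedderburn matrix unit with exactly the set of $\u d\in\D(n,r)$ belonging to the two-sided cell, which hinges on the right-cell analogue of \textbf{Q13} and on Proposition~\ref{LReq}. Everything else is an immediate application of the multiplication rule and the properties \textbf{Q6}, \textbf{Q13}.
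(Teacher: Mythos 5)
Your proof is correct and takes the same route the paper intends: the paper's own proof is just a one-line citation of Theorems~\ref{wedderburn}, \ref{eqLCmod} and \ref{isotypbasis}, and your argument is precisely the detailed verification behind that citation (orthogonality/idempotency from the multiplication rule plus \textbf{Q6}, \textbf{Q13} and Proposition~\ref{LReq}; primitivity and the identification of the diagonal matrix units from Theorem~\ref{isotypbasis} together with the matrix-unit correspondence coming from Theorem~\ref{eqLCmod}).
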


\begin{proof}
This follows directly from Theorems~\ref{wedderburn}, \ref{eqLCmod} and 
\ref{isotypbasis}.
\end{proof}

\begin{Cor}[Left cell modules as submodules]
Let $\u d \in \D(n,r)$. Then the $A$-span
\[ \LL_{\u d} := \left< \bas cd \mid \u c \sim_{\sL} \u d \right>_A \]
is a left $\Sa_q(n,r)$-module by the multiplication in $K \Sa_q(n,r)$ 
that is isomorphic to the left cell
module $\LC^{(\u d)}$. In fact, the representing matrices with respect to 
the basis $(\bas cd)_{\u c \sim_{\sL} \u d}$ are equal to the representing
matrices coming from the left cell module $\LC^{(\u d)}$ with respect to
its standard basis.
\end{Cor}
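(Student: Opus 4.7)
The plan is to verify directly that $\LL_{\u d}$ is closed under left multiplication by elements of $\Sa_q(n,r)$, that the structure constants stay in $A$, and that the resulting action on the basis $(\bas cd)_{\u c \sim_{\sL} \u d}$ is given by exactly the same matrices as the $\LC^{(\u d)}$-action on its standard basis $(\theta_{\u c} + \Sa_{<\Gamma})_{\u c \in \Gamma}$, where $\Gamma$ denotes the left cell containing $\u d$. This will simultaneously prove that $\LL_{\u d}$ is a left $\Sa_q(n,r)$-submodule of $K\Sa_q(n,r)$ and that the $A$-linear map $\theta_{\u c} + \Sa_{<\Gamma} \mapsto \bas cd$ is an isomorphism of $\Sa_q(n,r)$-modules.

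To carry this out, I would fix $h \in \Sa_q(n,r)$ and $\u a \in \Gamma$, and apply Formula~(\ref{theformula}) to expand
\[ h \cdot \theta_{\u a} = \sum_{\u e \in M(n,r)} \tau(\theta_{\u e}^\vee \cdot h \cdot \theta_{\u a})\,\theta_{\u e}. \]
Multiplying by $\theta_{\u d}^\vee$ on the right gives $h \cdot \bas ad = \sum_{\u e} \tau(\theta_{\u e}^\vee h \theta_{\u a})\,\bas ed$. The coefficients are structure constants of $\Sa_q(n,r)$ and therefore lie in $A$ by the definition of $\thu e^\vee$ together with Remark~\ref{apresformule}.

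The key step is then a double restriction of the summation index. First, the coefficient $\tau(\theta_{\u e}^\vee h \theta_{\u a})$ is nothing but the coefficient of $\theta_{\u e}$ in $h\theta_{\u a}$; since $\Sa_{\le \Gamma}$ is a left ideal, this forces $\u e \le_{\sL} \u a$. Second, by Lemma~\ref{charLR} the basis element $\bas ed = \thu e \thu d^\vee$ is non-zero only when $\u d \le_{\sL} \u e$. Combining these two conditions with $\u a \sim_{\sL} \u d$ and transitivity of $\le_{\sL}$, we get $\u e \sim_{\sL} \u a \sim_{\sL} \u d$, hence $\u e \in \Gamma$. Therefore
\[ h \cdot \bas ad = \sum_{\u e \in \Gamma} \tau(\theta_{\u e}^\vee h \theta_{\u a})\,\bas ed, \]
which shows closure of $\LL_{\u d}$ under the action and also that the matrix of $h$ in the basis $(\bas cd)_{\u c \in \Gamma}$ equals $(\tau(\theta_{\u e}^\vee h \theta_{\u a}))_{\u e, \u a \in \Gamma}$. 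By Remark~\ref{apresformule} this is precisely the representing matrix of $h$ on $\LC^{(\u d)}$ in its standard basis, so the $A$-linear bijection $\theta_{\u c} + \Sa_{<\Gamma} \mapsto \bas cd$ (a bijection of bases because $\B$ is part of a Wedderburn basis by Theorem~\ref{wedderburn}) is an $\Sa_q(n,r)$-module isomorphism.

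The only place requiring care is the vanishing argument for the terms with $\u e \not\in \Gamma$: without Lemma~\ref{charLR} one would have to argue directly why $\thu e \thu d^\vee = 0$ when $\u e <_{\sL} \u d$, and this is precisely where the characterisation in Lemma~\ref{charLR} makes the proof essentially automatic. Everything else is formal manipulation of Formula~(\ref{theformula}).
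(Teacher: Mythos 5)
Your proposal is correct and follows essentially the same route as the paper: expand $h\thu a$ via Formula~(\ref{theformula}), multiply on the right by $\thu d^\vee$, note the coefficients lie in $A$, and cut the sum down to $\Gamma$ by combining $\u e \le_{\sL} \u a$ with the non-vanishing condition $\u d \le_{\sL} \u e$ from Lemma~\ref{charLR}. The only (immaterial) difference is that the paper derives the upper bound $\u e \le_{\sL} \u a$ also from Lemma~\ref{charLR} via $\thu a\thu e^\vee \neq 0$, whereas you invoke the left-ideal property of $\Sa_{\le\Gamma}$ directly.
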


\begin{proof}
Let $\Gamma$ be the left cell that contains $\u d$. Then by
Formula~(\ref{theformula}) we have for every $h \in \Sa_q(n,r)$: 
\[ h \thu
c = \sum_{\u{c'} \in M(n,r)} \tau( \thu{c'}^\vee \cdot h \thu c) \cdot
\thu{c'}. \] 
Moreover, for $\u a \in A$, there is $\alpha_{\u a}\in A$ such that
\[ h=\sum_{\u a\in M(n,r)}\alpha_{\u a}\theta_{\u a}. \]
Hence, for $\u c,\,\u{c'}\in M(n,r)$, we have $\tau( \thu{c'}^\vee \cdot
h \thu c)\in A$, because $\tau( \thu{c'}^\vee \cdot \thu a \thu c)\in A$
(see Remark~\ref{apresformule}). Multiplying this from the right with
$\thu d^\vee$ we get
\[ h \bas cd = \sum_{\u{c'} \in M(n,r)} \tau( h \bas
c{c'} )\cdot \bas{c'}d, \] 
where we only have to sum over $\u{c'} \in \Gamma$, since all the
summands are zero unless $\u d \le_{\sL} \u{c'} \le_{\sL} \u c$ by
Lemma~\ref{charLR}, which is equivalent to $\u{c'} \in \Gamma$. We then
deduce that $\LL_{\u d}$ is a left $\Sa_q(n,r)$-module. Moreover,
comparing with Remark~\ref{apresformule}, this shows the statement about
the representing matrices.
\end{proof}

\begin{Cor}
\label{Sqcontained}
The Schur algebra $\Sa_q(n,r)$ is contained in the $A$-span of the
Wedderburn basis $\B$:
\[ \Sa_q(n,r) \subseteq \left< \B \right>_A \]
\end{Cor}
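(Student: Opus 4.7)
The plan is to write an arbitrary basis element $\thu a$ (for $\u a \in M(n,r)$) explicitly as an $A$-linear combination of Wedderburn basis vectors. The key tool is the previous Corollary on idempotents, which gives the decomposition $1 = \sum_{\u d \in \D(n,r)} c_{\u d}^{-1} \thu d \thu d^\vee$ of the identity in $\Sa_q(n,r)$. Multiplying $\thu a$ by this decomposition on the right yields
\[ \thu a = \sum_{\u d \in \D(n,r)} c_{\u d}^{-1} \thu a \thu d \thu d^\vee, \]
and then expanding $\thu a \thu d = \sum_{\u e \in M(n,r)} f_{\u a, \u d, \u e} \thu e$ in the Kazhdan-Lusztig type basis produces
\[ \thu a = \sum_{\u d \in \D(n,r)} \sum_{\u e \in M(n,r)} f_{\u a, \u d, \u e} \cdot \bigl( c_{\u d}^{-1} \thu e \thu d^\vee \bigr), \]
with all coefficients $f_{\u a, \u d, \u e}$ already in $A$.

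The next step is to verify that every surviving summand is actually an element of $\B$, i.e.\ that $\u e \sim_{\sL} \u d$ whenever the term is nonzero. On the one hand, if $f_{\u a, \u d, \u e} \neq 0$, then $\u e \leq_{\sL} \u d$ by the very definition of $\leq_{\sL}$ on $M(n,r)$. On the other hand, if $\thu e \thu d^\vee \neq 0$, then $\u d \leq_{\sL} \u e$ by Lemma~\ref{charLR}. Combining these two inequalities forces $\u e \sim_{\sL} \u d$, so that $c_{\u d}^{-1} \thu e \thu d^\vee \in \B$. After discarding the zero terms, the identity above exhibits $\thu a$ as an $A$-linear combination of Wedderburn basis elements.

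Since the family $\{\thu a \mid \u a \in M(n,r)\}$ is an $A$-basis of $\Sa_q(n,r)$, this establishes the desired inclusion $\Sa_q(n,r) \subseteq \langle \B \rangle_A$. I do not anticipate a real obstacle here; the whole argument is a clean interplay between the support condition on structure constants coming from the definition of $\leq_{\sL}$ and the non-vanishing criterion supplied by Lemma~\ref{charLR}, together with the formal decomposition of $1$ into the primitive idempotents $c_{\u d}^{-1} \thu d \thu d^\vee$.
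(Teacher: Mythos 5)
Your proof is correct, but it takes a genuinely different route from the paper's. The paper argues externally: it invokes the explicit Wedderburn isomorphism of $K\Sa_q(n,r)$ onto a direct sum of full matrix rings given by the representing matrices on a system of left cell modules, notes that the elements of $\B$ map to matrix units while the elements of $\Sa_q(n,r)$ map to tuples of matrices with entries in $A$ (Remark~\ref{coefinA}), and concludes the containment from there. You instead compute inside the algebra: right-multiplying by the resolution of the identity $1 = \sum_{\u d \in \D(n,r)} \cbas dd$ from the preceding corollary (which is legitimately available at this point, so there is no circularity) and expanding $\thu a \thu d$ gives
\[ \thu a \;=\; \sum_{\u d \in \D(n,r)} \sum_{\u e \in M(n,r)} f_{\u a,\u d,\u e}\cdot \cbas ed , \]
and your combination of the definition of $\le_{\sL}$ (which forces $\u e \le_{\sL} \u d$ when $f_{\u a,\u d,\u e}\neq 0$) with Lemma~\ref{charLR} (which forces $\u d \le_{\sL} \u e$ when $\thu e\thu d^\vee \neq 0$) correctly shows that every surviving summand lies in $A\cdot\B$. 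Both arguments ultimately rest on the integrality of the structure constants $f$, but yours has the extra benefit of producing the change-of-basis matrix $M=(f_{\u a,\u d,\u e})$ explicitly; indeed your formula is precisely the formula defining the Du--Lusztig homomorphism in Theorem~\ref{DuLusHom}, so it essentially anticipates Theorems~\ref{preimages} and \ref{JamesMeinolf}. The paper's version, on the other hand, makes the matrix-unit picture explicit, which it reuses in the statement of Theorem~\ref{newinter}.
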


\begin{proof}
Let $\Gamma_1, \ldots, \Gamma_n$ be left cells, such that the corresponding
left cell modules form a system of representatives for the isomorphism
types of simple left $K\Sa_q(n,r)$-modules. The mapping that maps
$h \in K\Sa_q(n,r)$
to its tuple of representing matrices in the cell modules
$\LC^{(\Gamma_1)}, \ldots, \LC^{(\Gamma_n)}$ with respect to their 
standard basis is an explicit isomorphism to a direct sum of
full matrix rings over $K$. In this isomorphism, the elements of $\B$
are mapped to a matrix unit, that is, to tuples of matrices, in which 
exactly one matrix is non-zero, and this matrix contains exactly one
non-zero coefficient equal to $1$. The elements of $\Sa_q(n,r)$ are
mapped to tuples of matrices with entries in $A$, since their representing
matrices on the cell modules have entries in $A$ (see the remark after
Theorem~\ref{SimpleCell}). Therefore, $\Sa_q(n,r)$ lies in the
$A$-span of $\B$.
\end{proof}

\begin{Prop}
\label{wedderdual}
Let $\tau$ be a  non-degenerate symmetrising trace form on
$K\Sa_q(n,r)$.
We denote by $\B$ the corresponding Wedderburn basis obtained in
Theorem~\ref{wedderburn}. Then, the dual basis of $\B$ relative to
$\tau$ is
\[ \B^{\vee}=\{\theta_{\u c}\theta_{\u d}^{\vee}\ |\ \u c\in M(n,r),\,\u
d\in\D(n,r),\,\u c\sim_{\sL}\u d\}. \]

\end{Prop}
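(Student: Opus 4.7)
The plan is to verify the dual basis property directly: for each $b = c_{\u d}^{-1}\thu c \thu d^\vee \in \B$, exhibit the unique $b' \in \B^\vee$ with $\tau(b \cdot b') = 1$ and $\tau(b \cdot b'') = 0$ for every other $b'' \in \B^\vee$, and then check that the resulting map $\B \to \B^\vee$ is a bijection (so that $\B^\vee$ as a \emph{set} is indeed the dual basis of $\B$).

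\emph{Step 1: compute the pairing.} Given arbitrary $b' = \thu{c'}\thu{d'}^\vee \in \B^\vee$, write $b' = c_{\u{d'}} \cdot \cbas{c'}{d'}$ and apply the three-case multiplication formula of Theorem~\ref{wedderburn} to $b \cdot \cbas{c'}{d'}$. The Schur-element factors cancel, so $b \cdot b' = \thu{c''}\thu{d'}^\vee$ precisely when $\LC^{(\u d)} \cong \LC^{(\u{d'})}$ and $\u d \sim_{\sR} \u{c'}$, where $\u{c''}$ is the unique element of $M(n,r)$ with $\u{c''} \sim_{\sL} \u{d'}$ and $\u{c''} \sim_{\sR} \u c$; otherwise $b \cdot b' = 0$. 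Applying $\tau$, using its cyclicity together with the defining identity $\tau(\thu a^\vee \thu b) = \delta_{\u a, \u b}$ of the dual basis of $(\thu a)$, one obtains $\tau(b \cdot b') = \delta_{\u{c''}, \u{d'}}$ in the nonzero case, which equals $1$ exactly when $\u{d'} \sim_{\sR} \u c$ also holds (equivalent to $\u{c''} = \u{d'}$ by the defining properties of $\u{c''}$).

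\emph{Step 2: identify the partner and show bijectivity.} The conditions $\u{d'} \in \D(n,r)$ and $\u{d'} \sim_{\sR} \u c$ force $\u{d'}$ to be the unique distinguished element in the right cell of $\u c$; its existence and uniqueness follow from Q13 and Q6, using that transposition fixes $\D(n,r)$ and bijects the right cell of $\u c$ with the left cell of $\u c^t$. The remaining conditions, together with Proposition~\ref{LReq}, pin $\u{c'}$ down as the unique element in (left cell of $\u{d'}$) $\cap$ (right cell of $\u d$); a short verification using $\u d = \u d^t$ from Q6 shows $\u{c'} = \u c^t$ (applying $\cdot^t$ to $\u{d'} \sim_{\sR} \u c$ and $\u d \sim_{\sL} \u c$ gives $\u{d'} \sim_{\sL} \u c^t$ and $\u d \sim_{\sR} \u c^t$). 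The resulting map $(\u c, \u d) \mapsto (\u c^t, \u{d'})$ on the common index set of $\B$ and $\B^\vee$ is manifestly an involution (iterating returns to $(\u c, \u d)$ by uniqueness of distinguished elements in the left cell of $\u c$), hence a bijection, so $\B^\vee$ coincides with the dual basis of $\B$.

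The main subtlety, worth highlighting, is that this identification is \emph{not} given by the naive matching of labels in $\B$ and $\B^\vee$: the dual of $\cbas cd$ is $\thu{c^t}\thu{d'}^\vee$ with $\u{d'}$ the distinguished element of the right cell of $\u c$, and it is the involutivity of this relabelling (which in turn rests on Q6, Q13 and Proposition~\ref{LReq}) that makes the set equality in the statement go through.
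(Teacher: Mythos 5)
Your overall route is the same as the paper's: compute the pairing $\tau(\cbas cd \cdot \bas{c'}{d'})$ via the multiplication rule of Theorem~\ref{wedderburn} together with $\tau(\thu{c''}\thu{d'}^\vee)=\delta_{\u{c''},\u{d'}}$, and then use transposition, \textbf{Q6}, \textbf{Q13} and Proposition~\ref{LReq} to show that the pairing can be non-zero only when $\u{c'}=\u c^t$ (with $\u{d'}$ the distinguished element of the right cell of $\u c$). That part is correct, as is your closing remark that the matching of labels is a non-trivial involution rather than the identity.

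There is, however, a gap in the existence half. You show that \emph{if} the product $\cbas cd\cdot\cbas{c'}{d'}$ falls into the third case of Theorem~\ref{wedderburn} \emph{then} the pairing equals $\delta_{\u{c''},\u{d'}}$, and you identify the unique candidate partner; but you never verify the first condition of that case for the candidate, namely $\LC^{(\u d)}\cong\LC^{(\u{d'})}$, where $\u d$ and $\u{d'}$ are the distinguished elements of the left cells of $\u c$ and of $\u c^t$ respectively. This is not available a priori: the paper has not shown that left cells in the same two-sided cell afford isomorphic modules (Lemma~\ref{isoLCLR} is only the converse direction), and it records precisely this isomorphism as a \emph{byproduct} in the remark following the proposition. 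As written, your argument only establishes that the pairing matrix is supported on the graph of your involution with entries in $\{0,1\}$, and the step ``hence a bijection, so $\B^\vee$ coincides with the dual basis'' does not follow, since some of those entries could in principle be $0$. The paper closes the loop by running the argument in the opposite direction: non-degeneracy of $\tau$ forces each $\cbas cd$ to pair non-trivially with \emph{some} element of $\B$ (equivalently of $\B^\vee$, which spans because it is a rescaling of $\B$), and the uniqueness analysis then identifies that element and forces the value to be $1$. Either insert that step, or note that your pairing matrix, being a $\{0,1\}$-valued matrix supported on a permutation and necessarily invertible, must have all those entries equal to $1$.
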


\begin{proof}

Note first, that since $\tau$ is non-degenerate and $\B$ is a basis of
$K\Sa_q(n,r)$, there must be at least one element $c_{\u d'}^{-1}\,
\bas {c'}{d'} \in \B$
such that $\tau( c_{\u d}^{-1}\, \bas cd \cdot c_{\u d'}^{-1}\,\bas {c'}{d'} )$ 
is non-zero. Since $c_{\u d'}\neq 0$, we have in particular 
$\tau( c_{\u d}^{-1}\, \bas cd \bas {c'}{d'} ) \neq 0$.
We try
to find out, which element $\bas {c'}{d'}$ this can be:

By Theorem~\ref{wedderburn}, the value 
$\tau(c_{\u d}^{-1}\, \bas cd\, \bas {c'}{d'} )$ is equal to zero, if
$\LC^{(\u d)} \not\cong \LC^{(d')}$ or $\u d \not\sim_{\sR} \u {c'}$.
If however $\LC^{(\u d)} \cong \LC^{(d')}$ and $\u d \sim_{\sR} \u{c'}$,
then it is equal to $\tau(\bas{c''}{d'} )$ where $\u{c''}$ is uniquely
defined by $\u{c''} \sim_{\sL} \u{d'}$ and $\u{c''} \sim_{\sR} \u c$.
If $\u{c''} \neq \u{d'}$, then this value is also equal to $0$ because
of the original definition of $\{\theta_{\u a}^\vee \mid \u a \in M(n,r)\}$.
If however $\u{c''} = \u{d'}$ we can show that $\u{c'} = \u c^t$
using Proposition~\ref{LReq}: Namely, we have $\u{c'} \sim_{\sL} \u{d'}
= \u{c''} \sim_{\sR} \u c$ and thus $\u{c'} \sim_{\sL} \u c^t$ by
transposition. Further, we have $\u{c'} \sim_{\sR} \u d \sim_{\sL} \u c$
and thus again by transposition $\u{c'} \sim_{\sR} \u c^t$. Thus,
$\u {c'}$ and $\u c^t$ are both left and right equivalent and therefore
equal.

Thus, we deduce that
\[ \tau(c_{\u d}^{-1}\, \bas cd \cdot \bas {c'}{d'} ) = \delta_{\u{c'},\u c^t} \]
for all $\u c \in M(n,r)$ and $\u d \in \D(n,r)$ with $\u c \sim_{\sL} \u
d$, and all $\u c' \in M(n,r)$ and $\u {d'} \in \D(n,r)$ with
$\u {c'} \sim_{\sL} \u {d'}$.
\end{proof}

\begin{remark}
Note that as a byproduct we have proved the following result: If 
$\u c \in M(n,r)$ and $\u d \in \D(n,r)$ with $\u c \sim_{\sL} \u
d$, and $\u {d'} \in \D(n,r)$ with $\u c^t \sim_{\sL} \u {d'}$,
then $\LC^{(\u d)} \cong \LC^{(\u {d'})}$.
\end{remark}

We now talk about $A$-sublattices of $K\Sa_q(n,r)$.

\begin{DefProp}[$A$-sublattices of $K\Sa_q(n,r)$ and their duals]
By an \textbf{$A$-lattice in $K\Sa_q(n,r)$} we mean
an $A$-free $A$-submodule that contains a $K$-basis of $K\Sa_q(n,r)$.
Let $L \subseteq K\Sa_q(n,r)$ be an $A$-lattice. Then we set
\[ L^\vee := \{ h \in K\Sa_q(n,r) \mid \tau(h x) \in A \mbox{ for all }
   x \in L \} \]
and call it the \textbf{dual lattice of $L$}. Since $\tau$ is non-degenerate,
$L^\vee$ is again an $A$-lattice in $K \Sa_q(n,r)$, namely, if $(b_{\u
a})_{\u a \in M(n,r)}$ is an $A$-basis of $L$, then the dual basis
$(b_{\u a}^\vee)_{\u a \in M(n,r)}$ is an $A$-basis of $L^\vee$. 
Clearly, if $L \subseteq N$ are two
$A$-lattices in $K\Sa_q(n,r)$, then $N^\vee \subseteq L^\vee$.

Note that we do not require an $A$-lattice to be an $A$-algebra!
\hfill $\square$
\end{DefProp}

\begin{Prop}[The dual is an $\Sa_q(n,r)$-module]
We have
$\Sa_q(n,r) \cdot \Sa_q(n,r)^\vee \subseteq \Sa_q(n,r)^\vee$.
\end{Prop}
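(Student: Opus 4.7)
The plan is to unwind the definition of the dual lattice and exploit the trace property of $\tau$. Specifically, we want to show that for any $s \in \Sa_q(n,r)$ and any $h \in \Sa_q(n,r)^\vee$ the product $sh$ satisfies the defining condition of $\Sa_q(n,r)^\vee$, namely $\tau(sh \cdot x) \in A$ for every $x \in \Sa_q(n,r)$.

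First, I would fix arbitrary elements $s, x \in \Sa_q(n,r)$ and $h \in \Sa_q(n,r)^\vee$ and consider the scalar $\tau(shx) \in K$. The key observation is that $\tau$ is a symmetrising \emph{trace} form, so it is invariant under cyclic permutations: $\tau(shx) = \tau(hxs)$. Since $\Sa_q(n,r)$ is an $A$-subalgebra of $K\Sa_q(n,r)$, the product $xs$ again lies in $\Sa_q(n,r)$, and therefore $\tau(h \cdot (xs)) \in A$ by the definition of $\Sa_q(n,r)^\vee$ applied to $h$ and the element $xs \in \Sa_q(n,r)$.

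This gives $\tau(sh \cdot x) \in A$ for every $x \in \Sa_q(n,r)$, which is exactly what is needed to conclude $sh \in \Sa_q(n,r)^\vee$. There is no real obstacle here: the proof is a one-line manipulation once the trace property of $\tau$ and the algebra closure of $\Sa_q(n,r)$ are combined. The same argument, using $\tau(xsh) = \tau(shx)$, would also yield the symmetric statement $\Sa_q(n,r)^\vee \cdot \Sa_q(n,r) \subseteq \Sa_q(n,r)^\vee$, so $\Sa_q(n,r)^\vee$ is in fact an $\Sa_q(n,r)$-bimodule.
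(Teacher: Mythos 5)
Your proof is correct and is essentially identical to the paper's own argument: both use the cyclic invariance of the trace form, $\tau(shx)=\tau(h\cdot xs)$, together with the fact that $xs$ lies in the algebra $\Sa_q(n,r)$, to reduce to the defining property of the dual lattice. No gaps.
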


\begin{proof}
Fix $h \in \Sa_q(n,r)$ and $k \in \Sa_q(n,r)^\vee$. 
We have to show that $hk \in \Sa_q(n,r)^\vee$. However, for every 
$x \in \Sa_q(n,r)$ holds $\tau(hkx)=\tau(kxh)$.
Since $xh \in \Sa_q(n,r)$ (because $\Sa_q(n,r)$ is an algebra), 
and $k \in \Sa_q(n,r)^\vee$ we get $\tau(kxh)\in A$.
\end{proof}

For the rest of this section we let 
$\tau = \sum_{\chi \in \Irr(K\Sa_q(n,r))} \chi$, that is,
we choose $\tau$ such that all Schur elements are equal to $1$.

\begin{Prop}[The Wedderburn-basis is self-dual]
\label{wedderselfdual}
Let $\tau = \sum_{\chi \in \Irr(K\Sa_q(n,r))} \chi$.
Then \[ \left<\B\right>_A^\vee = \left<\B\right>_A\] for the Wedderburn basis
$\B$ from Theorem~\ref{wedderburn}.
\end{Prop}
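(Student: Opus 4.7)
The plan is straightforward and rests on two earlier results: Proposition~\ref{wedderdual} which identifies the dual basis of $\B$ explicitly, and the choice of $\tau$ which normalises all Schur elements to $1$. Concretely, my strategy is to show that under this choice of $\tau$ the set $\B$ coincides with its own $\tau$-dual basis, so that the $A$-span is automatically self-dual in the sense of the preceding \emph{DefProp}.

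First, I would unpack the normalisation. Since $\tau = \sum_{\chi \in \Irr(K\Sa_q(n,r))} \chi$, comparing with the formula $\tau = \sum_\chi \chi/c_\chi$ from the start of Section~\ref{irr} shows that every Schur element $c_\chi$ equals $1$. By Definition~\ref{def:schur} this means $c_{\u d} = 1$ for every $\u d \in \D(n,r)$, and therefore
\[ \B = \{\, c_{\u d}^{-1}\,\bas cd \mid \u c \in M(n,r),\, \u d \in \D(n,r),\, \u c \sim_{\sL} \u d \,\}
     = \{\, \bas cd \mid \u c \in M(n,r),\, \u d \in \D(n,r),\, \u c \sim_{\sL} \u d \,\}. \]

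Next, I would invoke Proposition~\ref{wedderdual}, which describes the dual basis of $\B$ relative to $\tau$ as
\[ \B^\vee = \{\, \theta_{\u c}\,\theta_{\u d}^\vee \mid \u c \in M(n,r),\, \u d \in \D(n,r),\, \u c \sim_{\sL} \u d \,\}. \]
Comparing the two displayed sets, $\B = \B^\vee$ as sets in $K\Sa_q(n,r)$.

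Finally, by the DefProp on $A$-lattices and their duals, if $(b_{\u a})$ is any $A$-basis of a lattice $L$ then the $\tau$-dual basis $(b_{\u a}^\vee)$ is an $A$-basis of $L^\vee$. Applied to $L = \langle \B \rangle_A$ this gives
\[ \langle \B \rangle_A^\vee = \langle \B^\vee \rangle_A = \langle \B \rangle_A, \]
which is the desired equality. There is essentially no obstacle here: the only subtlety is making sure the bookkeeping between the Schur-element normalisation in $\tau$ and the scalars $c_{\u d}^{-1}$ appearing in the definition of $\B$ matches up, which it does precisely because of the choice $\tau = \sum_\chi \chi$.
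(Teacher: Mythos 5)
Your proof is correct and follows exactly the paper's own (very short) argument: the choice $\tau = \sum_\chi \chi$ forces all Schur elements $c_{\u d}$ to equal $1$, so $\B$ coincides with the dual basis $\B^\vee$ from Proposition~\ref{wedderdual}, and the lattice equality follows from the DefProp on dual lattices. Your write-up merely spells out the bookkeeping that the paper leaves implicit; nothing is missing.
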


\begin{proof}
Since $\tau$ is the sum of the irreducible characters, all Schur
elements $c_\chi$ are equal to one. It is then a direct consequence of
Proposition~\ref{wedderdual}.
\end{proof}

\begin{Cor}[The dual of $\Sa_q(n,r)$] From 
Lemma~\ref{Sqcontained} and Proposition~\ref{wedderselfdual} follows
\[ \left< \B \right>_A \subseteq \Sa_q(n,r)^\vee \]
\end{Cor}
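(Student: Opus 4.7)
The plan is to combine the two cited results with the elementary fact about duality of $A$-lattices that was recorded in the Definition/Proposition preceding the corollary, namely that inclusion of lattices is reversed by the duality $L \mapsto L^\vee$.

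First I would invoke Corollary~\ref{Sqcontained}, which gives the inclusion $\Sa_q(n,r) \subseteq \langle \B \rangle_A$ of $A$-lattices in $K\Sa_q(n,r)$. Applying the duality operation (with respect to the chosen trace form $\tau = \sum_{\chi} \chi$) and using the fact that it is inclusion-reversing, I obtain
\[ \langle \B \rangle_A^\vee \subseteq \Sa_q(n,r)^\vee. \]

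Next I would substitute into the left-hand side using Proposition~\ref{wedderselfdual}, which asserts $\langle \B \rangle_A^\vee = \langle \B \rangle_A$ precisely because all Schur elements equal $1$ for this particular choice of $\tau$, so that Proposition~\ref{wedderdual} yields a self-dual basis. The result is $\langle \B \rangle_A \subseteq \Sa_q(n,r)^\vee$, which is the claim.

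There is no real obstacle here: the corollary is essentially a formal consequence of the two stated results, and the only thing to check is that the duality operation on $A$-lattices actually reverses inclusions, which is immediate from the definition $L^\vee = \{ h \mid \tau(hx) \in A \text{ for all } x \in L \}$: enlarging $L$ puts more constraints on elements of $L^\vee$, hence shrinks $L^\vee$. Since both $\Sa_q(n,r)$ and $\langle \B \rangle_A$ are genuine $A$-lattices containing a $K$-basis of $K\Sa_q(n,r)$, the framework of the preceding Definition/Proposition applies directly.
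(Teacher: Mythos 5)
Your argument is exactly the paper's: dualise the inclusion $\Sa_q(n,r) \subseteq \left<\B\right>_A$ from Corollary~\ref{Sqcontained} using the inclusion-reversing property of $L \mapsto L^\vee$, then replace $\left<\B\right>_A^\vee$ by $\left<\B\right>_A$ via Proposition~\ref{wedderselfdual}. The paper's proof is the single phrase ``Dualising reverses inclusion,'' and your write-up is a correct elaboration of the same argument.
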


\begin{proof} Dualising reverses inclusion. \end{proof}

\section{The asymptotic algebra and the Du-Lusztig
homomorphism}\label{sec:asym}

In this section we briefly recall the definition of the asymptotic
algebra $\J(n,r)$ for the $q$-Schur algebra $\Sa_q(n,r)$ and of the
Du-Lusztig homomorphism $\Phi$ from $\Sa_q(n,r)$ to $\J(n,r)$. We then
show that this algebra is isomorphic to the algebra $\left< \B
\right>_A$ spanned by our Wedderburn basis $\B$ and that the Du-Lusztig
homomorphism can be interpreted as the inclusion of $\Sa_q(n,r)$ into
$\left< \B \right>_A$.

\begin{Def}[The asymptotic algebra $\J(n,r)$]
Let $\J(n,r)$ be the free abelian group with basis $\{t_{\u a} \mid 
\u a \in M(n,r)\}$. We define a multiplication on $\J(n,r)$ by setting
\[ t_{\u a} t_{\u b} = \sum_{\u c \in M(n,r)} \gamma_{\u a,\u b,\u c^t} 
   \cdot t_{\u c}. \]
We set $\D(n,r)_\lambda := \D(n,r) \cap M_{\lambda,\lambda}$.
Following Du, we denote the extension of scalars of $\J(n,r)$ to $A$
by $\J(n,r)_A$.
\end{Def}

\begin{Lemma}[{See \cite[(2.2.1)]{DuAsymp}}]
The $\Z$-algebra $\J(n,r)$ is associative with the identity element
\[ \sum_{\u d \in \D(n,r)} t_{\u d}. \]
\end{Lemma}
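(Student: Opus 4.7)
The plan is to prove the identity-element claim and associativity separately.

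For the identity element, I would expand $\bigl(\sum_{\u d \in \D(n,r)} t_{\u d}\bigr) \cdot t_{\u a}$ using the definition and observe that the coefficient of $t_{\u c}$ is $\sum_{\u d \in \D(n,r)} \gamma_{\u d, \u a, \u c^t}$. Applying the cyclic symmetry \textbf{Q7} this becomes $\gamma_{\u a, \u c^t, \u d}$, and since $\u d \in \D(n,r)$, property \textbf{Q2} forces $\u c^t = \u a^t$, i.e.\ $\u c = \u a$. When $\u c = \u a$, the remaining sum reduces, after one more application of \textbf{Q7}, to $\sum_{\u d \in \D(n,r)} \gamma_{(\u a^t)^t, \u a^t, \u d}$, which by \textbf{Q3} (applied to $\u a^t$) has exactly one nonzero summand and by \textbf{Q5} that summand equals $1$. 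The right-sided calculation is symmetric, so $\sum_{\u d \in \D(n,r)} t_{\u d}$ is a two-sided identity.

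For associativity I would compare the coefficient of $t_{\u f}$ in $(t_{\u a} t_{\u b}) t_{\u c}$ and in $t_{\u a}(t_{\u b} t_{\u c})$, reducing everything to the required identity
\[ \sum_{\u e} \gamma_{\u a, \u b, \u e^t}\, \gamma_{\u e, \u c, \u f^t} = \sum_{\u e} \gamma_{\u b, \u c, \u e^t}\, \gamma_{\u a, \u e, \u f^t}. \]
A preliminary step is to observe that both sides vanish identically unless $\a(\u b) = \a(\u c)$: indeed, \textbf{Q8} forces $\u b \sim_{\sL} \u e$ and $\u e \sim_{\sL} \u c^t$ for any nonvanishing summand on the left, and together with the facts that $\a$ is constant on left cells and $\a(\u x) = \a(\u x^t)$ (cf.\ the proof of \textbf{Q10}) this yields $\a(\u b) = \a(\u e) = \a(\u c)$; the right-hand sum is handled identically. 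So we are exactly in the situation covered by \textbf{Q15}.

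I would then feed our $\u a, \u b, \u c, \u f$ (in an appropriate role assignment) into \textbf{Q15} to obtain a polynomial identity in $v$ and $v'$ between two sums of products of $f$'s, rewrite each $f$ via Lemma~\ref{scqschur} in terms of the $g$'s and the normalising constants $h_{\mu}$, and extract the coefficient of $v^{\a(\u e)}$ (for the relevant $\u e$) together with the matching coefficient in $v'$. By the definition of $\gamma$, this leading-coefficient extraction converts \textbf{Q15} directly into the $\gamma$-identity above. The main obstacle is precisely this extraction: keeping book of which products of $f$'s contribute at the leading power, pushing through the factors $h_{\mu}^{-1}$, and confirming that the surviving index set is $\{\u e \mid \a(\u e) = \a(\u b) = \a(\u c)\}$ so that the two sides of the extracted identity line up term by term.
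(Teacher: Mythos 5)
Your proposal is correct in outline, but it is worth noting that the paper does not actually prove this lemma at all: it simply cites Du \cite[(2.2.1)]{DuAsymp}. What you have written is a self-contained derivation from the properties \textbf{Q1}--\textbf{Q15} established in Section~\ref{sec:conj}, which is precisely Lusztig's classical argument for the associativity and unit of the $J$-ring (\cite[\S 18]{Uneq}) transported to $M(n,r)$; this is arguably more in the spirit of the paper, since the authors go to the trouble of proving the \textbf{Q}-properties but then do not use them here. Your treatment of the identity element via \textbf{Q7}, \textbf{Q2}, \textbf{Q3}, \textbf{Q5} is complete and correct. Two small points on the associativity half. First, with the role assignment you need ($\u a$ on the left, $\u b$ in the middle, $\u c$ on the right, target $\u f$), the hypothesis of \textbf{Q15} is $\a(\text{middle})=\a(\text{target})$, i.e.\ $\a(\u b)=\a(\u f)$, not $\a(\u b)=\a(\u c)$; but the same \textbf{Q8}-argument you give also yields $\a(\u c)=\a(\u f)$ (indeed all of $\a(\u a^t)=\a(\u b)=\a(\u c)=\a(\u e)=\a(\u f)$ coincide on any nonvanishing summand of either side), so the reduction goes through verbatim. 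Second, in the coefficient extraction remember that $\gamma_{\u a,\u b,\u c^t}$ is defined as the leading coefficient of $g_{\sigma(\u a),\sigma(\u b),\sigma(\u c)}$, not of $f_{\u a,\u b,\u c}$, so the factors $h_\mu^{\pm 1}$ must be pulled out of both sides of \textbf{Q15} (they are the same constant $h_{co(\u a)}^{-1}h_{ro(\u c)}'^{-1}$ on both sides, independent of the summation index) before taking the coefficient of $v^{\a(\u f)}v'^{\a(\u e)}$; and the extraction in $v'$ is legitimate only because, as you note, $\a(\u e)$ is constant on the surviving index set. With those two clarifications your sketch is a complete proof.
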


\begin{Theo}[{The Du-Lusztig homomorphism $\Phi$, see \cite[(2.3]{DuAsymp}}]
\label{DuLusHom}
The $A$-linear map $\Phi : \Sa_q(n,r) \to \J(n,r)_A$ defined by
\[ \Phi( \thu a ) := 
   \sum_{ \renewcommand{\arraystretch}{0.5}\begin{array}{c}
     \scriptstyle \u b \in M(n,r) \\
     \scriptstyle \u d \in \D(n,r)_\mu \\
     \scriptstyle \a(\u d) = \a(\u b)
     \end{array} } f_{\u a,\u d,\u b} \cdot t_{\u b} 
 = \sum_{ \renewcommand{\arraystretch}{0.5}\begin{array}{c}
     \scriptstyle \u b \in M(n,r) \\
     \scriptstyle \u d \in \D(n,r) \\
     \scriptstyle \u d \sim_{\sL} \u b
     \end{array} } f_{\u a,\u d,\u b} \cdot t_{\u b}, \qquad
 \mbox{where } \mu = co(\u a) \]
is an algebra homomorphism and becomes an isomorphism 
$K\Sa_q(n,r) \to \J(n,r)_K$ when tensored with
the field of fractions $K$ of $A$.
\end{Theo}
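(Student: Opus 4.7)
My plan is to establish two claims that together imply the theorem: (ii) the base change $\Phi_K: K\Sa_q(n,r) \to \J(n,r)_K$ is a $K$-linear bijection, and (i) $\Phi$ is multiplicative. From these, $\Phi_K$ is automatically a $K$-algebra isomorphism, and $\Phi$ itself is then an $A$-algebra homomorphism since it lands in $\J(n,r)_A$ by construction.

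For the bijectivity in (ii), I would construct an explicit left inverse $\Psi$ to $\Phi_K$ using the Wedderburn basis $\B$ from Theorem~\ref{wedderburn}. By \textbf{Q13}, for each $\u b \in M(n,r)$ there is a unique $\u{d_b} \in \D(n,r)$ with $\u b \sim_{\sL} \u{d_b}$, so we can define the $K$-linear map $\Psi: \J(n,r)_K \to K\Sa_q(n,r)$ by $\Psi(t_{\u b}) := c_{\u{d_b}}^{-1} \theta_{\u b} \theta_{\u{d_b}}^\vee$. As $\u b$ ranges over $M(n,r)$, these values range exactly over $\B$, so $\Psi$ is a $K$-linear bijection. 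To confirm $\Psi \circ \Phi_K = \mathrm{id}$, I would compute on the standard basis of $K\Sa_q(n,r)$:
\[ \Psi(\Phi_K(\theta_{\u a})) = \sum_{\u d \in \D(n,r)} c_{\u d}^{-1} \sum_{\u b : \u d \sim_{\sL} \u b} f_{\u a, \u d, \u b} \, \theta_{\u b} \theta_{\u d}^\vee. \]
By Lemma~\ref{charLR} the factor $\theta_{\u b} \theta_{\u d}^\vee$ vanishes unless $\u d \le_{\sL} \u b$, while $f_{\u a, \u d, \u b} \ne 0$ forces $\u b \le_{\sL} \u d$ by the definition of $\le_{\sL}$; hence the constraint $\u d \sim_{\sL} \u b$ is automatic after multiplication by $\theta_{\u d}^\vee$, and the inner sum equals $\theta_{\u a} \theta_{\u d} \theta_{\u d}^\vee$. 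Summing over $\u d$ and invoking the idempotent decomposition $1 = \sum_{\u d \in \D(n,r)} c_{\u d}^{-1} \theta_{\u d} \theta_{\u d}^\vee$ (from the Corollary in Section~\ref{irr}) gives $\theta_{\u a}$, as required.

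For (i) I would follow the classical strategy used by Lusztig in the Hecke algebra case. Expanding $\Phi(\theta_{\u a}) \Phi(\theta_{\u{a'}})$ via the $\gamma$-multiplication in $\J(n,r)$ and comparing with $\Phi(\theta_{\u a} \theta_{\u{a'}}) = \sum_{\u c} f_{\u a, \u{a'}, \u c} \Phi(\theta_{\u c})$, matching the coefficient of each $t_{\u b}$ reduces to an identity between sums of products of the form $f \cdot \gamma$ and $f \cdot f$. Property \textbf{Q15}, specialised with a distinguished involution of appropriate $\a$-value playing the role of the matching parameter, provides this identity once \textbf{Q7} and \textbf{Q8} are used to control the support of the $\gamma$-coefficients and \textbf{Q13} to identify the relevant distinguished involution. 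Conceptually, the $\gamma$-coefficients are the top-degree parts (in $v^{-\a(\cdot)}$) of the structure constants $f$, and \textbf{Q15} encodes the compatibility of algebra multiplication with this asymptotic behaviour.

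The main obstacle is the multiplicativity in (i). Even with \textbf{Q15} in hand, extracting the correct leading-degree terms and matching them across the two sides requires careful bookkeeping of $\a$-values and support conditions, as well as invoking \textbf{Q4}, \textbf{Q9}--\textbf{Q11} to eliminate sub-leading terms that would otherwise contaminate the identity.
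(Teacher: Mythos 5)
Your argument for the bijectivity of $\Phi_K$ is sound and is, in effect, Theorem~\ref{preimages} read in reverse: the computation $\Psi(\Phi_K(\thu a)) = \sum_{\u d\in\D(n,r)} c_{\u d}^{-1}\thu a\thu d\thu d^\vee = \thu a$ correctly combines Lemma~\ref{charLR}, the definition of $\le_{\sL}$, and the idempotent decomposition of $1$, and none of the ingredients (Theorem~\ref{wedderburn}, \textbf{Q13}, the Corollary on idempotents) rely on the present theorem, so there is no circularity. This part is genuinely different from the paper, which simply cites \cite[2.3]{DuAsymp} for everything; your route would make the ``isomorphism over $K$'' claim self-contained, granted multiplicativity.

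The gap is exactly the multiplicativity, which is the actual substance of ``is an algebra homomorphism'' and which you leave as a sketch. Comparing coefficients of $t_{\u c}$ reduces the claim to
\[ \sum_{\u b,\,\u{b'},\,\u d,\,\u{d'}} f_{\u a,\u d,\u b}\, f_{\u{a'},\u{d'},\u{b'}}\, \gamma_{\u b,\u{b'},\u c^t} \;=\; \sum_{\u e,\ \u d\sim_{\sL}\u c} f_{\u a,\u{a'},\u e}\, f_{\u e,\u d,\u c}, \]
and it is precisely the derivation of this identity from \textbf{Q15} that carries all the difficulty: in the Hecke-algebra prototype (\cite[18.9]{Uneq}) one must extract the coefficient of $v'^{\a(\cdot)}$ by a delicate degree argument and then use \textbf{P4}, \textbf{P8} and \textbf{P13} to kill the sub-leading and off-cell terms. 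Saying that \textbf{Q15} ``provides this identity once \textbf{Q7} and \textbf{Q8} are used to control the support'' names the right tools but does none of the bookkeeping, and you concede as much by calling it ``the main obstacle''. The paper sidesteps this entirely by citing Du; a blind proof cannot. Note also that you never verify the equality of the two displayed sums defining $\Phi(\thu a)$ — the one point the paper's proof does check — which follows from $f_{\u a,\u d,\u b}=0$ unless $\u b\le_{\sL}\u d$ together with \textbf{Q9} and \textbf{Q4}, and from $ro(\u d)=co(\u d)$ for all $\u d\in\D(n,r)$.
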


\begin{proof} See \cite[2.3]{DuAsymp}. The latter equation holds,
since $f_{\u a,\u b,\u d} = 0$ unless $\u d \le_{\sL} \u b$, and
\textbf{Q9} implies $\u d \sim_{\sL} \u b$ in this case. Also we can
safely sum over all of $\D(n,r)$ neglecting the index $\mu$, since
all elements $\u d \in \D(n,r)$ fulfill $ro(\u d) = co(\u d)$ by
definition (see Definition~\ref{defD} and the remark there) and
$f_{\u a,\u d,\u b} = 0$ unless $co(\u a) = ro(\u d)$ anyway.
\end{proof}

We can now present our main theorem, which links our Wedderburn basis
$\B$ to the asymptotic algebra:

\begin{Theo}[Preimage of the $t$-basis under the Du-Lusztig homomorphism]
\label{preimages}
Let $\tau$ be an arbitrary non-degenerate symmetrising trace form. All dual
bases in the following are meant with respect to $\tau$.

With the above notation we have
\[ \Phi( \cbas cd ) = t_{\u c} \qquad \mbox{for all } \u c \in M(n,r). \]
\end{Theo}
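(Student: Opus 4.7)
The plan is to extend $\Phi$ $K$-linearly to the isomorphism $K\Sa_q(n,r) \xrightarrow{\sim} \J(n,r)_K$ from Theorem~\ref{DuLusHom} and compute $\Phi(\cbas cd) = c_{\u d}^{-1}\Phi(\thu c \thu d^\vee)$ directly. My first step is to derive a closed-form expression for $\Phi$ valid on all of $K\Sa_q(n,r)$: given any $y \in K\Sa_q(n,r)$, I write $y = \sum_{\u a} \tau(y\, \thu a^\vee)\,\thu a$ via Formula~(\ref{theformula}), apply $\Phi$, use $f_{\u a,\u{d'},\u b}=\tau(\thu a\, \thu{d'}\, \thu b^\vee)$ from Remark~\ref{apresformule}, and collapse the inner sum by recognising it again as an instance of~(\ref{theformula}), obtaining
\[
\Phi(y) \;=\; \sum_{\renewcommand{\arraystretch}{0.5}\begin{array}{c}
  \scriptstyle \u b \in M(n,r) \\
  \scriptstyle \u{d'} \in \D(n,r) \\
  \scriptstyle \u{d'} \sim_{\sL} \u b\end{array}}
  \tau\bigl(y \cdot \thu{d'}\, \thu b^\vee\bigr)\, t_{\u b}.
\]
Substituting $y = \thu c \thu d^\vee$ reduces the theorem to the claim $\tau\bigl((\thu c \thu d^\vee)(\thu{d'} \thu b^\vee)\bigr) = c_{\u d}\,\delta_{\u d,\u{d'}}\,\delta_{\u c,\u b}$.

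To evaluate the product $(\thu c \thu d^\vee)(\thu{d'} \thu b^\vee)$ I would exploit the matrix unit structure of Theorem~\ref{isotypbasis}. Both factors lie in isotypic components, so the product vanishes unless $\LC^{(\u d)} \cong \LC^{(\u{d'})}$. Assuming this, let $\Gamma$ and $\Gamma'$ be the left cells of $\u d$ and $\u{d'}$; they lie in the same two-sided cell by Lemma~\ref{isoLCLR}. Applying Theorem~\ref{eqLCmod} with source cell $\Gamma$ and target $\Gamma'$, I rewrite $\thu c \thu d^\vee = \thu a\, \thu{b^*}^\vee$ with $\u a,\,\u{b^*} \in \Gamma'$ uniquely determined by $\u a \sim_{\sR} \u c$ and $\u{b^*} \sim_{\sR} \u d$. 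Now all four factors in $(\thu a \thu{b^*}^\vee)(\thu{d'} \thu b^\vee)$ lie in the single cell $\Gamma'$, so the matrix unit relation from Theorem~\ref{isotypbasis} yields
$(\thu a \thu{b^*}^\vee)(\thu{d'} \thu b^\vee) = \delta_{\u{b^*},\u{d'}}\, c_{\u d}\, \thu a \thu b^\vee$.

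The hardest part is showing that $\u{b^*} = \u{d'}$ forces $\u d = \u{d'}$, so that distinct cells with isomorphic cell modules contribute nothing. From $\u{b^*} \sim_{\sR} \u d$ and $\u{b^*} = \u{d'}$ I get $\u d \sim_{\sR} \u{d'}$; property~\textbf{Q6} gives $\u d^t = \u d$ and $\u{d'}^t = \u{d'}$, so transposing yields $\u d \sim_{\sL} \u{d'}$, and \textbf{Q13} (uniqueness of the distinguished involution in each left cell) forces $\u d = \u{d'}$, hence $\Gamma = \Gamma'$. Once $\u d = \u{d'}$, Proposition~\ref{LReq} forces $\u a = \u c$ and $\u{b^*} = \u d$, since each is determined by simultaneous membership in $\Gamma$ and right-equivalence to a fixed element. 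Taking $\tau$ and using $\tau(\thu c \thu b^\vee) = \delta_{\u c,\u b}$ gives the target formula, and substituting back into the expression for $\Phi(\thu c \thu d^\vee)$ leaves only the summand $(\u b, \u{d'}) = (\u c, \u d)$. This yields $\Phi(\thu c \thu d^\vee) = c_{\u d}\, t_{\u c}$ and hence $\Phi(\cbas cd) = t_{\u c}$.
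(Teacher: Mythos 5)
Your proof is correct. The opening move --- extending the rightmost sum of Theorem~\ref{DuLusHom} to the $K$-linear formula $\Phi(y)=\sum \tau\bigl(y\cdot\bas{d'}{b}\bigr)\,t_{\u b}$ and then specialising $y=\cbas cd$ --- is exactly the paper's. Where you diverge is in showing that only the summand $(\u b,\u{d'})=(\u c,\u d)$ survives. The paper argues order-theoretically: from $\tau\bigl(\cbas cd\cdot\bas{d'}{b}\bigr)\neq 0$ and Lemma~\ref{charLR} it extracts the chain $\u b\le_{\sR}\u c\sim_{\sL}\u d\le_{\sR}\u{d'}\sim_{\sL}\u b$, deduces $\u b\sim_{\sR}\u c$ and $\u{d'}\sim_{\sR}\u d$ from \textbf{Q4} and \textbf{Q10}, and concludes with \textbf{Q13} and Proposition~\ref{LReq}. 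You instead compute the product itself: a case split on whether $\LC^{(\u d)}\cong\LC^{(\u{d'})}$, transport of $\thu c\thu d^\vee$ into the left cell of $\u{d'}$ via Theorem~\ref{eqLCmod}, the matrix-unit relations of Theorem~\ref{isotypbasis}, and then the same \textbf{Q6}/\textbf{Q13}/Proposition~\ref{LReq} uniqueness argument. Your route is somewhat heavier --- it imports Theorem~\ref{eqLCmod}, which the paper only needs for the Wedderburn-basis theorem itself --- but it buys the stronger intermediate statement $\bigl(\thu c\thu d^\vee\bigr)\bigl(\thu{d'}\thu b^\vee\bigr)=\delta_{\u d,\u{d'}}\,c_{\u d}\,\thu c\thu b^\vee$ rather than merely the vanishing of its trace; the paper's argument is more economical because it only ever manipulates $\tau$-values. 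Both arguments are complete and both reduce the surviving term to $\tau\bigl(\bas cc\bigr)=1$ via Theorem~\ref{isotypbasis}.
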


\begin{proof}
The rightmost sum in Theorem~\ref{DuLusHom} has the advantage that
it provides a formula for the image of an arbitrary
element $h \in K \Sa_q(n,r)$ under the Du-Lusztig homomorphism,
since it is obviously $K$-linear in $\thu a$:
\[ \Phi( h ) = \sum_{ \renewcommand{\arraystretch}{0.5}\begin{array}{c}
     \scriptstyle \u b \in M(n,r) \\
     \scriptstyle \u {d'} \in \D(n,r) \\
     \scriptstyle \u {d'} \sim_{\sL} \u b
     \end{array} } \tau( h \cdot \bas{d'}b ) \cdot t_{\u b} \]
(recall $\tau( \thu a \thu{d'} \thu b^\vee ) = f_{\u a,\u{d'},\u b}$).
But now we can immediately set $h := \cbas cd$ for some $\u c \in M(n,r)$
and $\u d \in \D(n,r)$ with $\u c \sim_{\sL} \u d$. The value
$\tau( \cbas cd \cdot \bas{d'}b )$ is zero (see Lemma~\ref{charLR}) unless
$\u b \le_{\sR} \u c \sim_{\sL} \u d \le_{\sR} \u {d'} \sim_{\sL} \u b$ and
this implies $\u b \sim_{\sR} c$ and $\u{d'} \sim_{\sR} \u d$
using \textbf{Q4} and \textbf{Q10}. But this means $\u{d'} = \u d$ by
\textbf{Q13} and the definition of $\sim_{\sR}$ 
and thus $\u b = \u c$ because
of Lemma~\ref{LReq}. Thus, in the sum there is only one non-zero summand,
which is $\tau( \cbas cd \cdot \bas dc ) t_{\u c}$. Now everything is in
a single left cell such that we can use Theorem~\ref{isotypbasis}
to get
\[ \tau( \cbas cd \cdot \bas dc ) \cdot t_{\u c}
 = \tau( \bas cc ) \cdot t_{\u c} = t_{\u c} \]
as claimed.
\end{proof}

We can summarise our results in the following way: 

\begin{Theo}[New interpretation of the Du-Lusztig homomorphism]
\label{newinter}
Let $\tau$ be an arbitrary non-degenerate symmetrising trace form on
$K\Sa_q(n,r)$. We define the set $\B$ as in Theorem~\ref{wedderburn}
and we set
\[ \J_{\tau}= \left<\B\right>_A. \]
The following diagram
commutes and all unmarked arrows are identities or natural inclusions:
\[ \xymatrix{
   \Sa_q(n,r) \ar[r] \ar@{=}[d] &
   \J_{\tau} \ar[r] \ar[d]_\Phi^\cong &
   K\Sa_q(n,r) \ar[d]_\Phi^\cong \\
   \Sa_q(n,r) \ar[r]^\Phi &
   \J(n,r)_A \ar[r] &
   \J(n,r)_K } \]
Thus, the asymptotic algebra $\J(n,r)_A$ is nothing but the $A$-span of
our Wedderburn basis and the Du-Lusztig homomorphism $\Phi$ can simply be
interpreted as the inclusion of $\Sa_q(n,r)$ into $\left< \B \right>_A$.
Furthermore, our results directly and explicitly show that 
$\left< \B \right>_A$ is isomorphic as an $A$-algebra to a direct sum of
full matrix rings over $A$.
\end{Theo}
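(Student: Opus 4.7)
The plan is to assemble pieces that are essentially already in place: Theorem~\ref{wedderburn} (the explicit product rule for $\B$), Corollary~\ref{Sqcontained} ($\Sa_q(n,r)\subseteq\langle\B\rangle_A$) and Theorem~\ref{preimages} (which matches $\B$ with the $t$-basis of $\J(n,r)_A$ under $\Phi$). Nothing substantial remains beyond bookkeeping, which I would organise into three steps.

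First I would observe that $\J_\tau = \langle\B\rangle_A$ is already an $A$-subalgebra of $K\Sa_q(n,r)$: by Theorem~\ref{wedderburn} every product of two basis elements of $\B$ is either zero or a single basis element of $\B$ with coefficient $1$. Hence $\J_\tau$ is closed under multiplication, and its explicit multiplication table exhibits it as a direct sum of full matrix rings over $A$, one block for each isomorphism class of simple left cell module $\LC^{(\u d)}$. This simultaneously takes care of the closing sentence of the theorem.

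Next, Theorem~\ref{preimages} tells us that $\Phi(\cbas cd) = t_{\u c}$ for every $\u c \in M(n,r)$, where $\u d$ is the unique element of $\D(n,r)$ with $\u c \sim_{\sL} \u d$ (as given by \textbf{Q13}). Thus $\Phi$ sends the $A$-basis $\B$ of $\J_\tau$ bijectively onto the $A$-basis $\{t_{\u c}\}_{\u c \in M(n,r)}$ of $\J(n,r)_A$, so the restriction $\Phi|_{\J_\tau}\colon \J_\tau \to \J(n,r)_A$ is an $A$-module isomorphism. Since $\Phi$ extends to the $K$-algebra isomorphism $K\Sa_q(n,r)\to \J(n,r)_K$ supplied by Theorem~\ref{DuLusHom}, its restriction to the subring $\J_\tau$ is automatically an $A$-algebra isomorphism.

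Finally, Corollary~\ref{Sqcontained} supplies the inclusion $\Sa_q(n,r)\subseteq\J_\tau$, and $\J_\tau\subseteq K\Sa_q(n,r)$ is tautological, so the top row of the diagram is a chain of inclusions. Commutativity of the two squares then reduces to the observation that $\Phi$ is one and the same map throughout, and that the bottom row arises from the top row by extension of scalars from $A$ to $K$. I do not anticipate any genuine obstacle; the whole content of the theorem sits in Theorems~\ref{wedderburn} and~\ref{preimages}, and the present statement merely repackages them.
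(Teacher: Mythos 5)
Your proposal is correct and follows the same route as the paper, which itself offers no separate argument for Theorem~\ref{newinter} but presents it as a direct summary of Theorem~\ref{wedderburn}, Corollary~\ref{Sqcontained}, Theorem~\ref{DuLusHom} and Theorem~\ref{preimages}. Your assembly of these pieces — closure of $\left<\B\right>_A$ under multiplication via the product rule, the bijection $\B \to \{t_{\u c}\}$ via $\Phi$, and the inclusion of $\Sa_q(n,r)$ — is exactly what the theorem's justification amounts to.
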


\section{A criterion for James'
conjecture}\label{partjames}

In this section we show how our results provide an equivalent formulation
of a conjecture about the representation theory of specialisations of the
$q$-Schur algebra. We first recall the conjecture.

The construction of the Iwahori-Hecke algebra of type A and of the
$q$-Schur algebra as in Section~\ref{sec:qschur} together with their
Kazhdan-Lusztig bases can be carried out over an arbitrary integral
domain $R$ with quotient field $k$ and with an arbitrary invertible
parameter $q \in R$ having a square root in that domain. We denote the
resulting algebra by $\Sa_q(n,r)_R$ and its extension of scalars to $k$
by $\Sa_q(n,r)_k$.

The case of the Laurent polynomial ring $A =
\Z[v,v^{-1}]$ and $q=v^2$ is called the ``generic'' case, since for
every other choice $(R,q)$ there is a ring homomorphism
$\varphi: \Z[v,v^{-1}] \to R$ mapping $v^2$ to $q \in R$, which
induces a ring homomorphism 
$\Sa_{v^2}(n,r)_A \to \Sa_q(n,r)_R \subseteq \Sa_q(n,r)_k$. This
is called a ``specialisation''.

It is known, that $\Sa_q(n,r)_k$ is semisimple unless $q$ is an $e$-th
root of unity. If $q$ is a root of unity, then there is a decomposition
matrix, which records the multiplicities of the simple modules in 
the so-called ``standard modules''. For the case that $k$ has characteristic
zero, recent work by Lascoux, Leclerc and Thibon, and Varagnolo and Vasserot
yields a complete determination of these decomposition matrices
(see \cite{varvas}, \cite{geckbourb} and the references there). However,
the case of positive characteristic is still open.

James' conjecture is a statement about this modular case. Roughly speaking,
it asserts that if $k$ is a field of characteristic $\ell$ and the 
multiplicative order $e$ of the parameter $q \in k$
is greater than $r$, then the decomposition matrix of $\Sa_q(n,r)_k$
does not depend on the particular value of $\ell$ but only on $e$.

We now want to make this statement more precise. Both the simple
modules and the standard modules have a labelling by the set $\Lambda(n,r)$.
Let $V^\lambda_{k,q}$ denote the standard module and $M^\lambda_{k,q}$ the 
simple module of $\Sa_q(n,r)_k$ corresponding to $\lambda$ and $\mu$ 
respectively. Then the decomposition matrix for $\Sa_q(n,r)_k$ consists 
of the numbers
\[ d^{k,q}_{\lambda,\mu} := 
   \mbox{multiplicity of $M^\mu_{k,q}$ in $V^\lambda_{k,q}$}.
\]

\begin{Conj}[{James, see \cite[\S4]{JamesConj} and \cite[\S3]{geckbourb}}]
If $\ell > r$ and $e$ is the multiplicative order of $q \in k$, then 
$d^{k,q}_{\lambda,\mu} = d^{\Q(\zeta_e),\zeta_e}_{\lambda,\mu}$ for
all $\lambda,\mu \in \Lambda(n,r)$, where
$\zeta_e$ is a complex primitive $e$-th root of unity.
\end{Conj}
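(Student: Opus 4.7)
The plan is to combine Theorem~\ref{newinter} with Geck's reformulation \cite[1.2]{geckqschurjames}, which says that James' conjecture is equivalent to the assertion that, for $\ell > r$, the $k$-rank of the Du--Lusztig homomorphism $\Phi_k : \Sa_q(n,r)_k \to \J(n,r)_k$ depends only on $e = \mathrm{ord}_{k^\times}(q)$ and not on $\ell$. Since James' conjecture is still open, my proposal is not a complete proof but a strategy that Theorem~\ref{newinter} makes available: translate this rank condition into a statement about the Wedderburn basis $\B$ that can in principle be attacked by direct combinatorial computation.

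First I would choose the symmetrising form $\tau = \sum_\chi \chi$ so that Proposition~\ref{wedderselfdual} applies. By Theorem~\ref{newinter}, $\Phi$ is then identified with the inclusion $\Sa_q(n,r) \hookrightarrow \left<\B\right>_A$ of two $A$-lattices of equal rank in $K\Sa_q(n,r)$, and $\left<\B\right>_A$ is an explicit direct sum of matrix rings over $A$ whose specialisation behaviour is visibly $\ell$-independent. Setting $N := \left<\B\right>_A / \Sa_q(n,r)$, for any specialisation $A \to k$ the rank of $\Phi_k$ equals $\mathrm{rank}_A(\left<\B\right>_A) - \dim_k \mathrm{Tor}_1^A(N,k)$, and the latter is determined by those elementary divisors of the finitely generated torsion $A$-module $N$ that lie in the kernel of $A \to k$. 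Thus the problem reduces to computing and comparing elementary divisors of a single, generic $A$-module $N$.

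The hard part will be controlling these elementary divisors. They are in principle accessible: an $A$-basis of $\Sa_q(n,r)$ is given by $\{\thu a\}$ and an $A$-basis of $\left<\B\right>_A$ by the elements $\bas c d$, so $N$ is presented by the change-of-basis matrix expressing the $\bas c d$ in terms of the $\thu a$, which in turn involves the dual basis $\thu d^\vee$ and the Schur elements $c_{\u d}$. The main obstacle is to prove that, for $\ell > r$, the $\ell$-parts of these elementary divisors depend only on the cell-theoretic data and hence, under a specialisation with $\mathrm{ord}_{k^\times}(q) = e$, only on $e$. This is in essence James' conjecture re-expressed in the language of $\B$, and completing it would require genuinely new combinatorial or geometric input beyond the formal machinery of this paper; what the paper does contribute is the reduction from a statement about decomposition numbers to a concrete question about the torsion of a single, explicitly described $A$-module.
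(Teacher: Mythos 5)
The statement you are asked about is James' conjecture itself: it is an open conjecture, the paper offers no proof of it, and you correctly refuse to claim one. What the paper actually does with this statement is reformulate it: via Geck's theorem it becomes the assertion that the rank of $\Phi_k$ depends only on $e$, and via Theorem~\ref{newinter} this becomes the assertion that the rank of the specialised change-of-basis matrix $\varphi(M)$ between the basis $\{\thu a\}$ and the Wedderburn basis $\B$ is $\ell$-independent (Theorem~\ref{JamesMeinolf}), after which Proposition~\ref{reformulation} gives a sufficient criterion in terms of Schur elements and the Gram matrix $P_\tau$. Your proposal follows essentially the same reduction: identify $\Phi$ with the inclusion of lattices $\Sa_q(n,r) \subseteq \left<\B\right>_A$ and study how the cokernel behaves under specialisation. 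The difference is one of packaging: you phrase the rank condition homologically, as $\operatorname{rk}\Phi_k = s - \dim_k \operatorname{Tor}_1^A(N,k)$ for $N = \left<\B\right>_A / \Sa_q(n,r)$, whereas the paper works directly with the matrix $M$ and, in Proposition~\ref{reformulation}, trades the rank computation for conditions on the Schur elements $c_{\u d}$ and on $P_\tau^{-1}$ via the monomial matrix $D = M^T P_\tau^{-1} M$ of Lemma~\ref{diag}. The paper's route is more concrete and yields a checkable criterion; yours stops at the reduction.

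One technical point in your second paragraph is not sound as written: $A = \Z[v,v^{-1}]$ is not a principal ideal domain (it has Krull dimension $2$), so the finitely generated torsion module $N$ need not admit a decomposition into cyclic modules and the presentation matrix need not have a Smith normal form. You therefore cannot speak of ``the elementary divisors of $N$'' or reduce the problem to asking which of them die under $A \to k$. The $\operatorname{Tor}$ formula itself is fine (it follows from the short exact sequence $0 \to \Sa_q(n,r) \to \left<\B\right>_A \to N \to 0$ and freeness of $\left<\B\right>_A$), but the subsequent structural claim should be replaced by what the paper actually uses: the rank of the image of the presentation matrix $M$ after applying the specialisation entrywise. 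With that correction, your proposal is an accurate account of what the paper contributes towards the conjecture, and an honest acknowledgement that the conjecture itself remains unproved.
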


Meinolf Geck has shown in \cite[Theorem 1.2]{geckqschurjames} that this
statement is equivalent to the fact, that for $\ell > r$, the rank of the
Du-Lusztig homomorphism $\Phi : \Sa_q(n,r)_k \to \J(n,r)_k$ with respect
to the two bases $(\theta_{\u a})_{\u a \in M(n,r)}$ and 
$(t_{\u a})_{\u a \in M(n,r)}$
respectively only depends on the multiplicative order $e$ of $q \in k$
and not on the characteristic $\ell$ of $k$. 

In view of our Theorem~\ref{newinter} this immediately implies:

\begin{Theo}[An equivalent formulation of James' conjecture]
\label{JamesMeinolf}
Let $\{\theta_{\u a} \mid \u a \in M(n,r)\}$ be the
Du-Kazhdan-Lusztig-basis of $\Sa_q(n,r)$ and let $\tau$ be a
non degenerate symmetrising trace form for 
$K\Sa_q(n,r)$. Let $\{ \theta_{\u a}^\vee
\mid \u a \in M(n,r)\}$ be the dual basis of  
$\{\theta_{\u a} \mid \u a \in M(n,r)\}$ with respect to $\tau$.
Let $\B$ be the basis defined in Theorem~\ref{wedderburn}.
Let $s := |M(n,r)|$ and 
$M = (m_{\u a,\u b})_{\u a, \u b \in M(n,r)} 
\in A^{s \times s}$
be the matrix, for which
\[ \theta_{\u a} = \sum_{\u c\, \in M(n,r)} m_{\u a, \u c} \cdot \cbas cd \]
with $\cbas cd \in \B$ holds for all $\u a \in M(n,r)$.

Let $\ell_1, \ell_2$ be two primes and
$\varphi_1 : \Z[v,v^{-1}] \to \mathbb{F}_{\ell_1}$ and
$\varphi_2 : \Z[v,v^{-1}] \to \mathbb{F}_{\ell_2}$ two ring homomorphisms, such
that the multiplicative orders of $\varphi_1(v^2)$ and $\varphi_2(v^2)$
are equal. Denote by $\varphi_i(M)$ the matrix in 
$\mathbb{F}_{\ell_i}^{s \times s}$
that one gets by applying the ring homomorphism $\varphi_i$ to every
entry of $M$. 

Then James' conjecture is equivalent to the fact, that for $\ell_1,
\ell_2 > r$ the ranks of $\varphi_1(M)$ and of $\varphi_2(M)$ are equal.
\end{Theo}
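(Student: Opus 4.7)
The plan is to show that the equivalence claimed in Theorem~\ref{JamesMeinolf} is essentially a direct translation of Geck's criterion from \cite[Theorem~1.2]{geckqschurjames} via the identification of the Du-Lusztig homomorphism provided by Theorem~\ref{newinter}. There is essentially no new mathematical content to prove beyond organising the existing results; the main work is to verify that the matrix $M$ is precisely the representing matrix of $\Phi$ with respect to the Du-Kazhdan-Lusztig basis and the $t$-basis, after which ranks of specialisations can be read off directly.

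First, I would remark that $M$ is well-defined over $A$: by Corollary~\ref{Sqcontained}, the element $\theta_{\u a}$ lies in $\langle \B \rangle_A$ for every $\u a \in M(n,r)$, so the coefficients $m_{\u a,\u c}$ indeed belong to $A$ and the specialised matrices $\varphi_i(M)$ make sense. Next, using Theorem~\ref{newinter}, the composition
\[
  \Sa_q(n,r) \hookrightarrow \J_\tau \xrightarrow{\;\Phi\;} \J(n,r)_A
\]
equals the Du-Lusztig homomorphism, and Theorem~\ref{preimages} says that under the isomorphism $\J_\tau \cong \J(n,r)_A$ the Wedderburn basis element $c_{\u d}^{-1}\thu c \thu d^\vee$ is sent to $t_{\u c}$. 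Therefore, writing $\theta_{\u a} = \sum_{\u c} m_{\u a,\u c}\, c_{\u d(\u c)}^{-1}\thu c \thu{d(\u c)}^\vee$ and applying $\Phi$ yields
\[
  \Phi(\theta_{\u a}) = \sum_{\u c \in M(n,r)} m_{\u a,\u c}\, t_{\u c},
\]
so $M$ is precisely the matrix of $\Phi : \Sa_q(n,r) \to \J(n,r)_A$ with respect to the bases $(\theta_{\u a})$ and $(t_{\u a})$.

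Now I would pass to a specialisation. For a ring homomorphism $\varphi : \Z[v,v^{-1}] \to R$ with quotient field $k$, tensoring with $k$ turns the Du-Kazhdan-Lusztig basis and the $t$-basis into $k$-bases of $\Sa_q(n,r)_k$ and $\J(n,r)_k$ respectively (both algebras are free with indexing set $M(n,r)$ and their structure constants are images of those of the generic case). Since the representing matrix of $\Phi_k$ with respect to these bases is obtained by applying $\varphi$ entrywise, we obtain
\[
  \operatorname{rank}(\Phi_k) = \operatorname{rank}(\varphi(M)).
\]
Applying this to the two specialisations $\varphi_1, \varphi_2 : \Z[v,v^{-1}] \to \mathbb{F}_{\ell_i}$ with matching multiplicative order $e = \operatorname{ord}(\varphi_i(v^2))$, the equality $\operatorname{rank}(\varphi_1(M)) = \operatorname{rank}(\varphi_2(M))$ is equivalent to $\operatorname{rank}(\Phi_{k_1}) = \operatorname{rank}(\Phi_{k_2})$.

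Finally, the conclusion follows by invoking Geck's reformulation \cite[Theorem~1.2]{geckqschurjames}: James' conjecture is equivalent to the assertion that for all $\ell > r$ the rank of $\Phi_k$ depends only on $e$ and not on $\ell$. By the identification above, this is literally the condition that $\operatorname{rank}(\varphi_1(M)) = \operatorname{rank}(\varphi_2(M))$ whenever $\ell_1,\ell_2 > r$ and the multiplicative orders of $\varphi_1(v^2)$ and $\varphi_2(v^2)$ coincide. The only mild subtlety, which I would handle by an explicit check, is the bookkeeping showing that the entries of $M$ really live in $A$ (rather than in $K$) so that entrywise reduction modulo $\ell_i$ is unambiguous; this is immediate from Corollary~\ref{Sqcontained}, and beyond it the argument is a routine translation.
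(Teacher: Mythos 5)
Your proposal is correct and follows exactly the route the paper intends: the paper offers no explicit proof beyond the remark that the theorem follows ``in view of Theorem~\ref{newinter}'' from Geck's criterion, and your argument simply fills in that implication by using Theorem~\ref{preimages} to identify $M$ as the representing matrix of $\Phi$ with respect to the bases $(\theta_{\u a})$ and $(t_{\u a})$, checking via Corollary~\ref{Sqcontained} that its entries lie in $A$, and observing that specialisation commutes with taking this matrix. This matches the paper's approach, with the details spelled out correctly.
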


Let $\tau$ be a non-degenerate symmetrising trace form on $K\Sa_q(n,r)$.
We denote by $\{\theta_{\u a} \mid \u a \in M(n,r)\}$ the
Du-Kazhdan-Lusztig-basis of $\Sa_q(n,r)$ and by $\{\theta_{\u 
a}^{\vee} \mid \u a \in M(n,r)\}$ its dual basis relative to $\tau$.
As above, we denote by $\B$ the Wedderburn basis obtained in
Theorem~\ref{wedderburn}. 
Moreover, we denote by $M = (m_{\u a, \u b})_{\u a, \u b \in M(n,r)}$ 
the change of basis matrix 
from $\{\theta_{\u a} \mid \u a \in M(n,r)\}$ to $\B$ as above and by 
$P_{\tau} = (p_{\u a, \u b})_{\u a, \u b \in M(n,r)}$ the change of basis matrix
from  $\{\theta_{\u a} \mid \u a
\in M(n,r)\}$ to  $\{\theta_{\u a}^{\vee} \mid \u a \in M(n,r)\}$,
that is:
\[ \theta_{\u a} = \sum_{\u b \in M(n,r)} p_{\u a, \u b} \cdot
\theta^\vee_{\u b} \]
for all $\u a \in M(n,r)$.
Formula~(\ref{theformula}) implies that 
\[ P_{\tau}=\left(\tau(\thu a\thu b)\right)_{\u a,\u b\in
M(n,r)}\quad\textrm{and}\quad 
P_{\tau}^{-1}=\left(\tau(\thu a^{\vee}\thu b^{\vee})\right)_{\u a,\u b\in
M(n,r)}. \]

\begin{Lemma}
\label{diag}
With the above notation, the matrix
\[ D=M^T P_{\tau}^{-1} M \]
is monomial and its entries are the Schur elements $c_{\u d}$ associated 
to $\u d\in\D(n,r)$ as in Definition~\ref{def:schur}.
\end{Lemma}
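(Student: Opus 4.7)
The plan is to identify $D = M^T P_\tau^{-1} M$ as the inverse of the Gram matrix of the bilinear form $(x,y) \mapsto \tau(xy)$ expressed in the Wedderburn basis $\B$, and then to read off its shape directly from the matrix-unit structure of $\B$ established in Section~\ref{irr}. Throughout, for $\u c \in M(n,r)$ let $\u d = \u d(\u c)$ denote the unique distinguished involution in the left cell of $\u c$ (by \textbf{Q13}), and abbreviate the corresponding element of $\B$ by $B_{\u c} := \cbas{c}{d}$.

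For the first step I would recall that, by its very definition and Formula~(\ref{theformula}), $P_\tau$ is the Gram matrix of $\tau(xy)$ in the basis $\{\thu a\}$. Substituting $\thu a = \sum_{\u c} m_{\u a,\u c} B_{\u c}$ and using bilinearity gives $P_\tau = M G_\B M^T$, where
\[ G_\B := \bigl(\tau(B_{\u c} B_{\u{c'}})\bigr)_{\u c,\u{c'} \in M(n,r)} \]
is the Gram matrix of the same form expressed in the basis $\B$. Inverting this identity yields the key relation $D = M^T P_\tau^{-1} M = G_\B^{-1}$, so the problem reduces to an explicit computation of $G_\B$.

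For the second step I would invoke the matrix-unit structure of $\B$ provided by Theorems~\ref{isotypbasis}, \ref{eqLCmod} and \ref{wedderburn}: if $\Omega_\chi$ is the two-sided cell corresponding to an irreducible character $\chi$ and $\Gamma \subseteq \Omega_\chi$ is a fixed left cell, then every $B_{\u c}$ with $\u c \in \Omega_\chi$ can be rewritten as $c_\chi^{-1}\bas{a}{b}$ for a unique pair $\u a, \u b \in \Gamma$ (this is exactly Theorem~\ref{eqLCmod}, using $\u a \sim_{\sR} \u c$ and $\u b \sim_{\sR} \u d(\u c)$). Combining the matrix-unit relation from Theorem~\ref{isotypbasis} with the duality $\tau(\thu e \thu f^\vee) = \delta_{\u e, \u f}$ then gives
\[ \tau(B_{\u c} B_{\u{c'}}) = \delta_{\u b, \u{a'}}\,\delta_{\u a,\u{b'}} \cdot c_\chi^{-1} \]
whenever $\u c, \u{c'} \in \Omega_\chi$ have matrix-unit labels $(\u a,\u b)$ and $(\u{a'},\u{b'})$ respectively, and vanishes when $\u c, \u{c'}$ lie in distinct isotypic components. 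Hence for each $\u c$ there is exactly one $\u{c'}$ with $\tau(B_{\u c} B_{\u{c'}}) \neq 0$ — namely the one whose matrix-unit labels are the transpose of those of $\u c$ — and the value is $c_\chi^{-1} = c_{\u d}^{-1}$ for any $\u d \in \Omega_\chi \cap \D(n,r)$ (Definition~\ref{def:schur}).

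Consequently $G_\B$ is a monomial matrix whose non-zero entries are of the form $c_{\u d}^{-1}$ with $\u d \in \D(n,r)$; inverting a monomial matrix produces another monomial matrix whose non-zero entries are the reciprocals of the originals, so $D = G_\B^{-1}$ is monomial with non-zero entries $c_{\u d}$, which is precisely the claim. The only delicate point will be the careful bookkeeping involved in matching the labels $\u c \in M(n,r)$ of elements of $\B$ with the abstract matrix-unit labels $(\u a, \u b) \in \Gamma \times \Gamma$, but this is exactly what Theorem~\ref{eqLCmod} and the proof of Theorem~\ref{wedderburn} have already done, so the remainder is a routine computation in a semisimple algebra endowed with the trace form $\tau$.
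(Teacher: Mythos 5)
Your argument is correct and is essentially the paper's own proof in different clothing: the identification $D = M^T P_\tau^{-1} M = G_\B^{-1}$ is exactly the paper's observation that $D$ is the change-of-basis matrix from $\B^\vee$ to $\B$, and your direct computation of $G_\B$ via the matrix-unit relations reproduces the content of Proposition~\ref{wedderdual}, which the paper simply cites at this point. So the proposal is valid and takes the same route, merely re-deriving the duality statement instead of invoking it.
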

\begin{proof}
The matrix $M^T$ is the change of basis matrix from $\B^{\vee}$ to 
$\{\theta_{\u a}^{\vee} \mid \u a \in M(n,r)\}$ and thus
the matrix $D$ is the change of basis matrix from
$\B^{\vee}$ to $\B$, that is:
\[ \theta_{\u c} \theta^\vee_{\u d} = 
\sum_{\u {c'} \in M(n,r)} d_{\u c,\u {c'}} c_{\u d'}^{-1} \theta_{\u
c'} \theta^\vee_{\u d'} \]
for all $\theta_{\u c} \theta^\vee_{\u d} \in \B^{\vee}$.
Using Proposition~\ref{wedderdual}, the result follows. 
\end{proof}

\begin{Prop}[A criterion for James' conjecture]
\label{reformulation}
Let $\tau$ be a non-degenerate symmetrising
trace form on $K\Sa_q(n,r)$. Let $\varphi_e : A \to \Z[\zeta_{2e}], v
\mapsto \zeta_{2e}$ be a specialisation to characteristic $0$ where
$v^2$ is mapped to a primitive $e$-th root of unity in a cyclotomic
field and $\varphi_\ell : A \to \mathbb{F}_\ell$ is a 
second specialisation to characteristic $\ell$ 
such that there is a ring homomorphism
$\varphi^e_\ell : \mathbb{Z}[\zeta_{2e}] \to \mathbb{F}_\ell$ with 
$\varphi_\ell =
\varphi^e_\ell \circ \varphi_e$.
We suppose that
$\ell > r$ and the following hypotheses on $\tau$:
\begin{itemize}
\item The Schur elements $c_{\u d}$ for $\u d\in \D(n,r)$ lie in $A$.
\item The coefficients of the matrix $P_{\tau}^{-1}$ lie in $A$.
\item Let $a$ be the number of Schur elements $c_{\u d}$ for $\u d \in \D(n,r)$
    that do not vanish under $\varphi_e$ and $b$ the number of Schur
    elements that do not vanish under $\varphi_{\ell}$.
    The numbers $a$ and $b$ are both equal to the rank over
    $\mathbb{Q}(\zeta_{2e})$ of the 
    matrix $\varphi_e(M)$ for $M$ from above.
\end{itemize}
Note that we denote with the notation $\varphi_e(M)$ the matrix one gets
from $M$ by applying the ring homomorphism $\varphi_e$ on every entry.

If $\tau$ can be found fulfilling all these hypotheses, then James'
conjecture holds for all $\ell > r$ for which $\varphi_\ell$ as above
exist.
\end{Prop}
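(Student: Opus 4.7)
The plan is to combine Lemma~\ref{diag} with Geck's criterion Theorem~\ref{JamesMeinolf}. Under the first two hypotheses the identity $D = M^T P_\tau^{-1} M$ of Lemma~\ref{diag} is an equation in $A^{s\times s}$ (using in addition that $M$ has entries in $A$ by Corollary~\ref{Sqcontained}), and $D$ is a monomial matrix whose nonzero entries are precisely the Schur elements $c_{\u d}$ for $\u d\in\D(n,r)$. In particular, the rank of any specialisation of $D$ equals the number of Schur elements that survive that specialisation.

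Applying $\varphi_e$ to this equation gives $\varphi_e(D) = \varphi_e(M)^T\varphi_e(P_\tau^{-1})\varphi_e(M)$, so $\operatorname{rank}(\varphi_e(D)) = a$. The elementary inequality $\operatorname{rank}(\varphi_e(D))\le\operatorname{rank}(\varphi_e(M))$, combined with the third hypothesis $a = \operatorname{rank}(\varphi_e(M))$, forces equality $\operatorname{rank}(\varphi_e(M)) = a$. The same computation with $\varphi_\ell$ in place of $\varphi_e$ yields $b \le \operatorname{rank}(\varphi_\ell(M))$.

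For the reverse inequality I would invoke semi-continuity of rank under ring homomorphisms: since $\varphi_\ell = \varphi^e_\ell\circ\varphi_e$, every minor of $\varphi_\ell(M)$ is the image under $\varphi^e_\ell$ of the corresponding minor of $\varphi_e(M)$, so a minor that vanishes in $\Z[\zeta_{2e}]$ still vanishes in $\mathbb{F}_\ell$. Hence $\operatorname{rank}(\varphi_\ell(M)) \le \operatorname{rank}(\varphi_e(M)) = a$, and together with the third hypothesis $b = a$ this pins $\operatorname{rank}(\varphi_\ell(M)) = a$ exactly.

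In particular $\operatorname{rank}(\varphi_\ell(M))$ depends only on $e$ and not on the prime $\ell > r$, provided $\varphi_\ell$ factors through $\varphi_e$. Consequently, for any two such primes $\ell_1,\ell_2 > r$ with the same multiplicative order of $\varphi_{\ell_i}(v^2)$, the ranks of $\varphi_{\ell_1}(M)$ and $\varphi_{\ell_2}(M)$ coincide, and James' conjecture follows from Theorem~\ref{JamesMeinolf}. The proof itself is essentially a short exercise combining Lemma~\ref{diag} with semi-continuity of rank; the real difficulty is not in this argument but rather lies elsewhere, namely in the prior task of exhibiting a trace form $\tau$ on $K\Sa_q(n,r)$ for which all three hypotheses can actually be verified.
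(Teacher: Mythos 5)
Your proof is correct and follows essentially the same route as the paper: both arguments rest on the factorisation $D = M^T P_\tau^{-1} M$ from Lemma~\ref{diag}, the observation that the monomial matrix $D$ has rank equal to the number of surviving Schur elements, the two inequalities $\operatorname{rank}(\varphi_\ell(D)) \le \operatorname{rank}(\varphi_\ell(M)) \le \operatorname{rank}(\varphi_e(M))$ (the second via semi-continuity of rank under $\varphi^e_\ell$), and the third hypothesis to squeeze these into equalities before invoking Theorem~\ref{JamesMeinolf}. Your citation of Corollary~\ref{Sqcontained} for the integrality of $M$ is an apt justification, and your closing remark correctly locates the remaining difficulty in actually exhibiting a suitable $\tau$.
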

\begin{proof}
We denote by $M$ the change of basis matrix
from $\{\theta_{\u a} \mid \u a \in M(n,r)\}$ to $\B$ as above.
Then Lemma~\ref{diag} asserts that 
\[ D=M^{T} P_{\tau}^{-1} M.\] 
Thanks to Theorem~\ref{wedderburn}, the coefficients of the matrix
$M$ lie in $A$. By hypothesis, the matrix $P_{\tau}^{-1}$ has coefficients in
$A$. By Lemma~\ref{diag} and the first hypothesis the entries of $D$ are
also in $A$.

Since the matrices $D$, $M$, $M^T$, and $P_{\tau}^{-1}$ have
coefficients in $A$, the matrices $\varphi_e(D)$, $\varphi_e(M)$,
$\varphi_{\ell}(D)$, $\varphi_{\ell}(M)$, $\varphi_{\ell}(M^T)$ and
$\varphi_{\ell}(P_{\tau}^{-1})$ are well-defined.
We then have the following equality
\[ \varphi_\ell(D) = \varphi_\ell(M^T) \cdot \varphi_\ell(P_{\tau}^{-1}) 
\cdot
   \varphi_\ell(M), \]
implying that $\operatorname{rk}_{\mathbb{F}_{\ell}}(\varphi_{\ell}(D))\leq
\operatorname{rk}_{\mathbb{F}_{\ell}}(\varphi_{\ell}(M))$.
Moreover we have $\varphi_{\ell}(M)=\varphi_{\ell}^e(\varphi_e(M))$.
Since $\varphi_{\ell}^e$ is a ring homomorphism, we deduce that
$$\operatorname{rk}_{\mathbb{F}_{\ell}}(\varphi_{\ell}(M))\leq\operatorname{rk}_{\Q(\zeta_{2e})}(\varphi_e(M)).
$$
Since $D$ is a monomial matrix containing only the Schur
elements as non-zero entries, the numbers $a$ and $b$ from the
hypotheses are the ranks of $\varphi_e(D)$ and $\varphi_\ell(D)$
respectively. 
However, if as in the last hypothesis the ranks of 
$\varphi_e(M)$ and $\varphi_\ell(D)$ are equal,
then it follows that
$\operatorname{rk}_{\mathbb{F}_{\ell}}(\varphi_\ell(M))\leq
\operatorname{rk}_{\mathbb{F}_{\ell}}(\varphi_\ell(D))$.
We then deduce that
$$\operatorname{rk}_{\mathbb{F}_{\ell}}(\varphi_\ell(M))=
\operatorname{rk}_{\mathbb{F}_{\ell}}(\varphi_\ell(D)),$$
and the result now follows from Theorem~\ref{JamesMeinolf}.
\end{proof}

\begin{remark}
To prove James' conjecture it is enough to find a symmetrising
trace form $\tau$ on $K\Sa_q(n,r)$ such that the hypotheses
of Proposition~\ref{reformulation} are satisfied. We notice
that the assumption on $P_{\tau}$ in the statement of
Proposition~\ref{reformulation} is ``generic'' in the sense that this
property only depending on the ``generic'' $q$-Schur algebra, but not on
specialisations over finite fields.
\end{remark}
\begin{remark}
We can replace the second assumption of
Proposition~\ref{reformulation} by the fact that the matrix
$P_{\tau}^{-1}M$ (or $M^TP_{\tau}^{-1}$) has its coefficients in $A$.
\end{remark}
\begin{remark}For the usual trace form $\tau$ on Hecke algebras of type
$A$, we note that the assumptions of Proposition~\ref{reformulation}
hold. Then using~\cite{klwedder}, we can prove in a way similar
to the one of the proof of Proposition~\ref{reformulation},
that the rank of the Lusztig homomorphim (specialized in a
finite field $\mathbb{F}_{\ell}$ by $\varphi_{\ell}:A\rightarrow
\mathbb{F}_{\ell}$ mapping $v^2$ to an element $q\in\mathbb{F}_{\ell}$
with multiplicative order $e$ as above) does not depend on $\ell$.
However as noted by Geck in~\cite{geckqschurjames} an analogue result as
Theorem~\ref{JamesMeinolf} in Hecke algebras does not imply the Hecke
algebras James' conjecture.
\end{remark}

\bibliographystyle{plain}
\bibliography{klqschur}

\begin{thebibliography}{10}

\bibitem{CR1}
Charles~W. Curtis and Irving Reiner.
\newblock {\em Methods of representation theory. {V}ol. {I}}.
\newblock Wiley Classics Library. John Wiley \& Sons Inc., New York, 1990.
\newblock With applications to finite groups and orders, Reprint of the 1981
  original, A Wiley-Interscience Publication.

\bibitem{DJRepHecke}
Richard Dipper and Gordon James.
\newblock Representations of {H}ecke algebras of general linear groups.
\newblock {\em Proc. London Math. Soc. (3)}, 52(1):20--52, 1986.

\bibitem{DJ}
Richard Dipper and Gordon James.
\newblock The {$q$}-{S}chur algebra.
\newblock {\em Proc. London Math. Soc. (3)}, 59(1):23--50, 1989.

\bibitem{qtensor}
Richard Dipper and Gordon James.
\newblock {$q$}-tensor space and {$q$}-{W}eyl modules.
\newblock {\em Trans. Amer. Math. Soc.}, 327(1):251--282, 1991.

\bibitem{DuKLBasis}
Jie Du.
\newblock Kazhdan-{L}usztig bases and isomorphism theorems for {$q$}-{S}chur
  algebras.
\newblock In {\em Kazhdan-Lusztig theory and related topics (Chicago, IL,
  1989)}, volume 139 of {\em Contemp. Math.}, pages 121--140. Amer. Math. Soc.,
  Providence, RI, 1992.

\bibitem{DuCanGL}
Jie Du.
\newblock Canonical bases for irreducible representations of quantum {${\rm
  GL}\sb n$}. {II}.
\newblock {\em J. London Math. Soc. (2)}, 51(3):461--470, 1995.

\bibitem{DuAsymp}
Jie Du.
\newblock {$q$}-{S}chur algebras, asymptotic forms, and quantum {${\rm SL}\sb
  n$}.
\newblock {\em J. Algebra}, 177(2):385--408, 1995.

\bibitem{geckbourb}
Meinolf Geck.
\newblock Representations of {H}ecke algebras at roots of unity.
\newblock {\em Ast\'erisque}, (252):Exp.\ No.\ 836, 3, 33--55, 1998.
\newblock S\'eminaire Bourbaki. Vol.\ 1997/98.

\bibitem{geckqschurjames}
Meinolf Geck.
\newblock Kazhdan-{L}usztig cells, {$q$}-{S}chur algebras and {J}ames'
  conjecture.
\newblock {\em J. London Math. Soc. (2)}, 63(2):336--352, 2001.

\bibitem{GP}
Meinolf Geck and G{\"o}tz Pfeiffer.
\newblock {\em Characters of Finite Coxeter Groups and Iwahori-Hecke Algebras},
  volume~21 of {\em London Mathematic Society, New Series}.
\newblock Oxford University Press, Oxford, 2000.

\bibitem{JamesConj}
Gordon James.
\newblock The decomposition matrices of {${\rm GL}\sb n(q)$} for {$n\le 10$}.
\newblock {\em Proc. London Math. Soc. (3)}, 60(2):225--265, 1990.

\bibitem{KL79}
David Kazhdan and George Lusztig.
\newblock Representations of {C}oxeter groups and {H}ecke algebras.
\newblock {\em Invent. Math.}, 53(2):165--184, 1979.

\bibitem{Uneq}
G.~Lusztig.
\newblock {\em Hecke algebras with unequal parameters}, volume~18 of {\em CRM
  Monograph Series}.
\newblock American Mathematical Society, Providence, RI, 2003.

\bibitem{klwedder}
Max Neunh{\"o}ffer.
\newblock Kazhdan-{L}usztig basis, {W}edderburn decomposition, and {L}usztig's
  homomorphism for {I}wahori-{H}ecke algebras.
\newblock {\em J. Algebra}, 303(1):430--446, 2006.

\bibitem{varvas}
Michela Varagnolo and Eric Vasserot.
\newblock On the decomposition matrices of the quantized {S}chur algebra.
\newblock {\em Duke Math. J.}, 100(2):267--297, 1999.

\end{thebibliography}

\end{document}